\documentclass{birkjour}

\usepackage[utf8]{inputenc}
\usepackage[T1]{fontenc}
\usepackage[english]{babel}
\usepackage{mathtools,amsmath,amssymb,amsfonts,amsthm,mathrsfs}
\mathtoolsset{showonlyrefs}
\usepackage{bbm}
\usepackage{enumitem}

\usepackage{geometry}
\geometry{left=13.7mm, right=13.7mm}

\usepackage{graphicx,color}
\usepackage{tikz,pgfplots,pgf}
\usepackage[caption=false]{epsfig}
\usepackage{subcaption}
\usepackage[font=small]{caption}

\usepackage[bookmarksnumbered,colorlinks=true,urlcolor=black,linkcolor=black,citecolor=black,bookmarks, breaklinks]{hyperref}
\makeatletter
\newcommand{\leqnomode}{\tagsleft@true\let\veqno\@@leqno}
\makeatother

\newtheorem{theorem}{Theorem}[section]

\newtheorem{lemma}{Lemma}[section]
\newtheorem{proposition}{Proposition}[section]

\theoremstyle{definition}
\newtheorem{remark}{Remark}[section]

\numberwithin{equation}{section}

\begin{document}

\title[Homoclinic and heteroclinic solutions for Minkowski-curvature equations]{Homoclinic and heteroclinic solutions\\ for non-autonomous Minkowski-curvature equations}

\author[G.~Feltrin]{Guglielmo Feltrin}

\address{
Department of Mathematics, Computer Science and Physics, University of Udine\\
Via delle Scienze 206, 33100 Udine, Italy}
    
\email{guglielmo.feltrin@uniud.it}

\author[M.~Garrione]{Maurizio Garrione}

\address{
Department of Mathematics, Politecnico di Milano\\
Piazza Leonardo da Vinci, 32, 20133 Milano, Italy}

\email{maurizio.garrione@polimi.it}

\thanks{Work written under the auspices of the Grup\-po Na\-zio\-na\-le per l'Anali\-si Ma\-te\-ma\-ti\-ca, la Pro\-ba\-bi\-li\-t\`{a} e le lo\-ro Appli\-ca\-zio\-ni (GNAMPA) of the Isti\-tu\-to Na\-zio\-na\-le di Al\-ta Ma\-te\-ma\-ti\-ca (INdAM). The authors are supported by INdAM-GNAMPA project ``Analisi qualitativa di problemi differenziali non lineari''.
\\
\textbf{Preprint -- September 2023}} 

\subjclass{34C37, 35J93, 37J46.}

\keywords{Minkowski-curvature equation, heteroclinic solution, homoclinic solutions, asymptotic behaviour, phase-plane analysis}

\date{}

\dedicatory{Dedicated to Professor Fabio Zanolin on the occasion of his 70$^{\text{th}}$ birthday}

\begin{abstract}
We deal with the non-autonomous parameter-dependent second-order differential equation 
\begin{equation*}
\delta \biggl{(} \dfrac{v'}{\sqrt{1-(v')^{2}}} \biggr{)}' + q(t) f(v)= 0, \quad t\in\mathbb{R},
\end{equation*}
driven by a Minkowski-curvature operator. Here, $\delta>0$, $q\in L^{\infty}(\mathbb{R})$, $f\colon\mathopen{[}0,1\mathclose{]}\to\mathbb{R}$ is a continuous function with $f(0)=f(1)=0=f(\alpha)$ for some $\alpha \in \mathopen{]}0,1\mathclose{[}$, $f(s)<0$ for all $s\in\mathopen{]}0,\alpha\mathclose{[}$ and $f(s)>0$ for all $s\in\mathopen{]}\alpha,1\mathclose{[}$. 
Based on a careful phase-plane analysis, under suitable assumptions on $q$ we prove the existence of strictly increasing heteroclinic solutions and of homoclinic solutions with a unique change of monotonicity. Then, we analyze the asymptotic behaviour of such solutions both for $\delta \to 0^{+}$ and for $\delta\to+\infty$. Some numerical examples illustrate the stated results.
\end{abstract}

\maketitle

\section{Introduction}\label{section-1}

In this paper, we are concerned with homoclinic and heteroclinic solutions for the equation 
\begin{equation}\label{eq-intro}
\delta \biggl{(} \dfrac{v'}{\sqrt{1-(v')^{2}}} \biggr{)}' + q(t) f(v)= 0, \quad t\in\mathbb{R},
\end{equation}
where $\delta > 0$, $q \in L^{\infty}(\mathbb{R})$ and $f\colon \mathopen{[}0,1\mathclose{]} \to \mathbb{R}$ is a sign-changing function satisfying $f(0)=f(1)=0$.

The second-order operator appearing in \eqref{eq-intro}, given by $(\phi(v'))'$, with
\begin{equation}\label{op-intro}
\phi(\xi) = \dfrac{\xi}{\sqrt{1-\xi^{2}}},
\end{equation}
is usually found in the theory of nonlinear electromagnetism, where it is referred to as \emph{Born--Infeld operator}, and in general relativity, since it can be seen as a mean-curvature operator in the relativistic Lorentz--Minkowski space. We refer, for instance, to \cite{GaGuYa-19} and to the extensive discussions in \cite[p.~3]{BoCoNy-17} and in \cite{Az-16, Ga-22-pp} for further considerations in this respect. 

The investigation of homoclinic and heteroclinic solutions for second-order ODEs is a very classical topic; in the autonomous case we make reference, among the others, to \cite{BoSa-06,MaMa-02,MaMa-03} and to the bibliography in \cite{ArCa-22-pp}. In particular, a significant deal of attention has been received by such a problem in presence of nonlinear operators of curvature type, mainly as a byproduct of the search for travelling fronts, see, e.g.,  \cite{CoSa-14, GaSa-15, KuRo-06,Ru-20} and the references therein. Also in the non-autonomous case there are contributions, though in minor quantity; we mention, for instance, the papers~\cite{ElZa-13,PeWaLv-21} for equations governed by the linear second-order operator and \cite{BiCaPa-19,Ca-11} for more general problems dealt with through an abstract functional approach. 
In this respect, particularly significant in relation to the present manuscript is the paper \cite{BoCoNy-17}, where the authors make use of variational methods to find heteroclinics whenever $f$ is the derivative of a double-well potential.

The presence of the nonconstant weight $q$ in \eqref{eq-intro} makes indeed the considered problem non-autonomous and, in principle, prevents one from obtaining the desired solutions via a simple study of the orbits associated with the equivalent first-order system. Anyway, in this paper we will maintain a geometric phase-plane approach, aiming to obtain heteroclinics and homoclinics by gluing suitable branches of solutions. In this respect, it is useful to mention that, since $q$ is not necessarily continuous, by a solution of \eqref{eq-intro} we mean a continuously differentiable function $v\colon\mathbb{R}\to\mathopen{[}0,1\mathclose{]}$, with $v'$ absolutely continuous, which satisfies equation \eqref{eq-intro} almost everywhere. Moreover, if $v$ is strictly increasing with $v(-\infty)=0$, $v(+\infty)=1$, we say that $v$ is a \textit{heteroclinic solution}, while if $v(\pm\infty)=0$ and $v$ displays a unique change of monotonicity, we call $v$ a \textit{homoclinic solution}; in both cases $v'(\pm\infty)=0$ by the monotonicity. 
We also observe that in each interval of monotonicity of any solution $v=v(t)$, one can write the inverse function $t=t(v)$, so that $v(t(v))=v$, and regard $v$ as an independent variable. Setting
\begin{equation}\label{eq-first}
y(v) = \dfrac{1}{\sqrt{1-(v'(t(v)))^{2}}}-1
\quad
\left(\text{implying}
\quad
v'(t(v)) = \dfrac{\sqrt{(y(v))^{2}+2y(v)}}{y(v)+1}\right),
\end{equation}
we thus have
\begin{equation*}
\dfrac{\mathrm{d}}{\mathrm{d}v} y (v) = \dfrac{v''(t(v))}{(1-(v'(t(v)))^{2})^{\frac{3}{2}}} v'(t(v)) t'(v).
\end{equation*}
Since $v'(t(v)) t'(v)=1$, from \eqref{eq-intro} we conclude that $y$ satisfies 
\begin{equation}\label{eq-y}
\dot{y}(v) = - q(t(v))\frac{f(v)}{\delta},
\end{equation}
where from now on we denote by ``$\cdot$'' the differentiation with respect to $v$.
Noticing that $y(v)=0$ is equivalent to $v'(t(v))=0$ by \eqref{eq-first}, and $t(0)=-\infty$, $t(1)=+\infty$, the function $y$ defined by \eqref{eq-first} satisfies $y(0)=0$ and $y(1)=0$.
Again, the solution $y$ of \eqref{eq-y} is meant in the absolutely continuous sense, so that $\dot{y}$ is well defined almost everywhere.

As for the assumptions on $f$, when $q$ is constant the existence of heteroclinics and homoclinics necessarily requires that the primitive 
\begin{equation*}
F(v) := \int_{0}^{v} f(s)\,\mathrm{d}s
\end{equation*}
(always fulfilling $F(0)=0$) vanishes at some $v_{0} \in \mathopen{]}0, 1\mathclose{]}$, to which purpose $f$ has to change sign. To fix ideas, we consider a reaction term $f$ which is negative in a right neighborhood of $0$ and positive in a left neighborhood of $1$; more precisely, henceforth we assume that 
$f\colon \mathopen{[}0,1\mathclose{]} \to \mathbb{R}$ is a Lipschitz continuous function with Lipschitz constant $L > 0$, such that
\begin{enumerate}[leftmargin=26pt,labelsep=8pt,label=\textup{$(f_{1})$}]
\item $f(0)=f(1)=0$ and there exist $\alpha, \beta \in\mathopen{]}0,1\mathclose{[}$, with $\alpha \leq \beta$, such that $f(\alpha)=f(\beta)=0$, $f(s)<0$ for all $s\in\mathopen{]}0,\alpha\mathclose{[}$, $f(s)>0$ for all $s\in\mathopen{]}\beta,1\mathclose{[}$;
\label{hp-f-1}
\end{enumerate}
and
\begin{enumerate}[leftmargin=26pt,labelsep=8pt,label=\textup{$(f_{2})$}]
\item $F(1) > 0$;
\label{hp-f-2}
\end{enumerate}
or
\begin{enumerate}[leftmargin=26pt,labelsep=8pt,label=\textup{$(f_{2}')$}]
\item $F(1) =0$. 
\label{hp-f-2-b}
\end{enumerate}
Whenever useful, we will extend $f$ in a continuous way to the whole real line by setting $f(s)=0$ for all $s\in\mathbb{R} \setminus \mathopen{[}0,1\mathclose{]}$.
Notice that since $F(\alpha)<0$ and $F(1) \geq 0$, by the continuity of $F$ there exists $v_{0}\in\mathopen{]}\alpha,1\mathclose{]}$ such that $F(v_{0})=0$. 

Particularly common, in literature, is the case when $f$ is \emph{bistable}, that is, $\alpha=\beta$ and $f$ displays a single change of sign. Under this assumption, we can give a first result for a stepwise constant weight $q$ with a single jump, which can be immediately proved by elementary considerations in the phase-plane (see Figure~\ref{fig-01}). 

\begin{proposition}\label{prop-intro}
Let $\delta > 0$ be fixed and let $q\equiv c_{1}$ in $\mathopen{]}-\infty,t_{0}\mathclose{[}$ and $q\equiv c_{2}$ in $\mathopen{[}t_{0},+\infty\mathclose{[}$, with $c_{1},c_{2}>0$. Let $f$ be a Lipschitz continuous function satisfying \ref{hp-f-1} and \ref{hp-f-2}. 
Then, the following hold:
\begin{itemize}
\item if $c_{2} > c_{1}$, then any solution of \eqref{eq-intro} such that $v(-\infty)=0$ is ``definitively periodic'', that is, $v(t)=v(t+T)$ for every $t\in\mathopen{[}t_0, +\infty\mathclose{[}$ for a suitable $T>0$; 
\item if $c_{2} = c_{1}$, then there exists a homoclinic solution of \eqref{eq-intro}, unique up to $t$-translation;
\item if $c_{2} < c_{1}$ and $c_{2} \neq c_{1} \dfrac{-F(\rho)}{F(1)-F(\rho)}$ for all $\rho\in\mathopen{]}0,\alpha\mathclose{]}$, then all the solutions of \eqref{eq-intro} for which $v(-\infty)=0$ take either value $0$ or value $1$ (with nonzero derivative) in finite time;
\item if $c_{2} = c_{1} \dfrac{-F(\rho)}{F(1)-F(\rho)}$ for some $\rho\in\mathopen{]}0,\alpha\mathclose{]}$, then there exists a heteroclinic solution of \eqref{eq-intro}.
\end{itemize}
On the other hand, if \ref{hp-f-2-b} holds, then
\begin{itemize}
\item if $c_2 > c_1$, then any solution of \eqref{eq-intro} such that $v(-\infty)=0$ is ``definitively periodic'' in the above sense (in particular, there are no homoclinic solutions of \eqref{eq-intro});
\item if $c_{2} = c_{1}$, then there exists a heteroclinic solution of \eqref{eq-intro}; 
\item if $c_2 < c_1$, then all the solutions of \eqref{eq-intro} for which $v(-\infty)=0$ take either value $0$ or value $1$ (with nonzero derivative) in finite time.
\end{itemize}
\end{proposition}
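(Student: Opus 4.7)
The plan is to carry out a direct phase-plane analysis of the first-order reformulation~\eqref{eq-y}. Since $q\equiv c_{i}$ is constant on each of $\mathopen{]}-\infty,t_{0}\mathclose{[}$ and $\mathopen{[}t_{0},+\infty\mathclose{[}$, on those half-lines \eqref{eq-y} is autonomous with conserved energy $\mathcal{E}_{c}(v,y):=y+(c/\delta)F(v)$, and every orbit lies on an explicit level curve $y = E-(c/\delta)F(v)$. A solution of \eqref{eq-intro} with the piecewise weight is then nothing but a pair of such orbits glued at $t_{0}$ continuously in $(v,y)$.

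I would then fix a solution with $v(-\infty)=0$ and denote by $\Gamma_{1}$ its trace on $\mathopen{]}-\infty,t_{0}\mathclose{[}$. Letting $t\to -\infty$ forces $v,y\to 0$, so the left-piece energy equals $0$ and $\Gamma_{1}:y=-(c_{1}/\delta)F(v)$. Under \ref{hp-f-1}, this is a well-defined nonnegative arc on $\mathopen{[}0,v_{0}^{\ast}\mathclose{]}$, where $v_{0}^{\ast}\in\mathopen{]}\alpha,1\mathclose{]}$ is the unique further zero of $F$, satisfying $v_{0}^{\ast}<1$ under \ref{hp-f-2} and $v_{0}^{\ast}=1$ under \ref{hp-f-2-b}. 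A direct estimate on $dt/dv=(y+1)/\sqrt{y^{2}+2y}$ --- using that $y$ is $O(v^{2})$ near $v=0$, that $y$ vanishes linearly at an interior zero of $F$, and that $y$ is $O((1-v)^{2})$ near $v=1$ (by the Lipschitz bound $|f(v)|\leq L|1-v|$) --- shows that $v=0$ is attained only as $t\to -\infty$, that $v_{0}^{\ast}<1$ is reached in finite time, and that $v_{0}^{\ast}=1$ is reached only as $t\to +\infty$. Modulo $t$-translation, $v_{0}:=v(t_{0})$ thus parametrizes the one-parameter admissible family, ranging over $\mathopen{]}0,v_{0}^{\ast}\mathclose{]}$ (resp.\ $\mathopen{]}0,1\mathclose{[}$).

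Matching $y$ at $t_{0}$ pins the right-piece energy
\[
E_{2} \;=\; -\tfrac{c_{1}}{\delta}F(v_{0}) + \tfrac{c_{2}}{\delta}F(v_{0}) \;=\; \tfrac{c_{2}-c_{1}}{\delta}F(v_{0}),
\]
and the continuation trace is $\Gamma_{2}:y=E_{2}-(c_{2}/\delta)F(v)$; all the alternatives of the proposition then follow by reading off its qualitative shape. Since $F(v_{0})<0$ in the interior of the admissible range, the sign of $E_{2}$ matches that of $c_{2}-c_{1}$. When $c_{2}>c_{1}$, $E_{2}<0$ and $\Gamma_{2}$ is a closed orbit around $(\alpha,0)$, giving the definitively periodic regime. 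When $c_{2}=c_{1}$, $\Gamma_{2}=\Gamma_{1}$, reproducing the autonomous homoclinic under \ref{hp-f-2} (uniquely up to translation, by uniqueness of the further zero of $F$) or the autonomous heteroclinic under \ref{hp-f-2-b}. When $c_{2}<c_{1}$ one has $E_{2}>0$, so $\Gamma_{2}$ reaches $v=1$ iff $y_{2}(1)=E_{2}-(c_{2}/\delta)F(1)\geq 0$: under \ref{hp-f-2} this rearranges to $c_{2}\leq c_{1}(-F(v_{0}))/(F(1)-F(v_{0}))$, and the map $\rho\mapsto c_{1}(-F(\rho))/(F(1)-F(\rho))$ is continuous and strictly increasing on $\mathopen{]}0,\alpha\mathclose{]}$ from $0$ to a finite maximum, so its image identifies the values of $c_{2}$ for which some $\rho\in\mathopen{]}0,\alpha\mathclose{]}$ furnishes a heteroclinic (via the same asymptotic-at-$1$ argument used previously at $v_{0}^{\ast}$), while outside this image $y_{2}(1)$ has a fixed sign for every $v_{0}$ and $\Gamma_{2}$ forces every solution to exit the strip $\mathopen{[}0,1\mathclose{]}$ in finite time with $v'\neq 0$. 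Under \ref{hp-f-2-b} one has $F\leq 0$ throughout $\mathopen{[}0,1\mathclose{]}$, so for $c_{2}<c_{1}$ we obtain $y_{2}>0$ everywhere, forcing finite-time exit and ruling out both heteroclinic and homoclinic continuations. The main obstacle I expect is pure bookkeeping: correctly tracking, for each pair $(c_{2}/c_{1},v_{0})$, whether $\Gamma_{2}$ is closed, a separatrix, or open, and separating the asymptotic boundary behaviour at $v\in\{0,1\}$ from the finite-time turning at interior zeros of $F$.
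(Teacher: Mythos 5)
Your reduction to the conserved quantity $y+(c/\delta)F(v)$ on each half-line, followed by matching at $t_{0}$, is precisely the argument the paper has in mind: it offers no written proof beyond the remark that the proposition ``can be immediately proved by elementary considerations in the phase-plane'' together with Figure~\ref{fig-01}, whose level lines $\mathcal{E}(v,w)=\sqrt{1+w^{2}}-1+(c/\delta)F(v)$ become your energies under the substitution $y=\sqrt{1+w^{2}}-1$. Your case computations (sign of $E_{2}$; the criterion $y_{2}(1)\geq 0$ rearranging to $c_{2}\leq c_{1}(-F(v_{0}))/(F(1)-F(v_{0}))$; the monotone range of $\rho\mapsto -F(\rho)/(F(1)-F(\rho))$ on $\mathopen{]}0,\alpha\mathclose{]}$; the finite/infinite travel-time estimates at $v=0$, at the further zero $v_{0}^{\ast}$ of $F$, and at $v=1$) are all correct.

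Two remarks. The smaller one: parametrizing the admissible family by $v_{0}=v(t_{0})$ alone conflates the ascending and descending halves of the $c_{1}$-loop; under \ref{hp-f-2} the state at time $t_{0}$ can lie on either branch. This is harmless for the ``closed orbit'' and ``exit'' alternatives because the level sets are symmetric in $w$, but for the heteroclinic alternative you should state that the translation is chosen so that the matching occurs on the ascending branch, and for the ``all solutions exit'' alternative you must also dispose of descending-branch matchings (they leave through $v=0$ in finite time, since there $\sqrt{1+w^{2}}-1=E_{2}>0$). The more substantive one: under \ref{hp-f-2} the vertex $(v_{0}^{\ast},0)$ of the $c_{1}$-loop is reached at a \emph{finite} time, so for exactly one translation one has $v(t_{0})=v_{0}^{\ast}$, $v'(t_{0})=0$, hence $F(v(t_{0}))=0$ and $E_{2}=0$ regardless of $c_{2}$; the forward continuation is then the descending half of the $c_{2}$-level-zero loop, which tends to $0$ only as $t\to+\infty$. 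This single exceptional solution is homoclinic rather than definitively periodic (first bullet) and does not attain $0$ or $1$ in finite time (third bullet), so the universally quantified claims fail for it; your phrase ``in the interior of the admissible range'' silently discards this matching. This is arguably an imprecision of the statement itself rather than of your strategy, but a complete write-up should isolate and address it explicitly. No such issue arises under \ref{hp-f-2-b}, where the $c_{1}$-orbit never reaches $(1,0)$ in finite time and $F<0$ on all of $\mathopen{]}0,1\mathclose{[}$.
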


\begin{figure}[htb]
\begin{tikzpicture}
\begin{axis}[
  scaled ticks=false,
  tick label style={font=\scriptsize},
  axis y line=left, axis x line=middle,
  xtick={0.4, 0.666667, 1},
  ytick={0},
  xticklabels={ , , $1$},
  yticklabels={$0$},
  xlabel={\small $v$},
  ylabel={\small $f(v)$},
every axis x label/.style={  at={(ticklabel* cs:1.0)},  anchor=west,
},
every axis y label/.style={  at={(ticklabel* cs:1.0)},  anchor=south,
},
  width=6cm,
  height=6cm,
  xmin=0,
  xmax=1.1,
  ymin=-0.05,
  ymax=0.08]
\addplot [color=black, fill=gray, fill opacity=0.2, line width=0.9pt,smooth] coordinates {(0., 0.) (0.02, -0.007448) (0.04, -0.013824) (0.06, -0.019176) (0.08, -0.023552) (0.1, -0.027) (0.12, -0.029568) (0.14, -0.031304) (0.16, -0.032256) (0.18, -0.032472) (0.2, -0.032) (0.22, -0.030888) (0.24, -0.029184) (0.26, -0.026936) (0.28, -0.024192) (0.3, -0.021) (0.32, -0.017408) (0.34, -0.013464) (0.36, -0.009216) (0.38, -0.004712) (0.4, 0.)};
\addplot [color=white, fill=gray, fill opacity=0.2, line width=0pt,smooth] coordinates {(0.4, 0.) (0.42, 0.004872) (0.44, 0.009856) (0.46, 0.014904) (0.48, 0.019968) (0.5, 0.025) (0.52, 0.029952) (0.54, 0.034776) (0.56, 0.039424) (0.58, 0.043848) (0.6, 0.048) (0.62, 0.051832) (0.64, 0.055296) (0.66, 0.058344) (0.666666667, 0.0592593) (0.666666667, 0)};
\addplot [color=black,line width=0.9pt,smooth] coordinates {(0., 0.) (0.02, -0.007448) (0.04, -0.013824) (0.06, -0.019176) (0.08, -0.023552) (0.1, -0.027) (0.12, -0.029568) (0.14, -0.031304) (0.16, -0.032256) (0.18, -0.032472) (0.2, -0.032) (0.22, -0.030888) (0.24, -0.029184) (0.26, -0.026936) (0.28, -0.024192) (0.3, -0.021) (0.32, -0.017408) (0.34, -0.013464) (0.36, -0.009216) (0.38, -0.004712) (0.4, 0.) (0.42, 0.004872) (0.44, 0.009856) (0.46, 0.014904) (0.48, 0.019968) (0.5, 0.025) (0.52, 0.029952) (0.54, 0.034776) (0.56, 0.039424) (0.58, 0.043848) (0.6, 0.048) (0.62, 0.051832) (0.64, 0.055296) (0.66, 0.058344) (0.68, 0.060928) (0.7, 0.063) (0.72, 0.064512) (0.74, 0.065416) (0.76, 0.065664) (0.78, 0.065208) (0.8, 0.064) (0.82, 0.061992) (0.84, 0.059136) (0.86, 0.055384) (0.88, 0.050688) (0.9, 0.045) (0.92, 0.038272) (0.94, 0.030456) (0.96, 0.021504) (0.98, 0.011368) (1., 0.)};
\draw [color=gray, dashed, line width=0.3pt] (axis cs: 0.66666667,0)--(axis cs: 0.66666667, 0.0592593);
\node at (axis cs: 0.42,-0.006) {\scriptsize{$\alpha$}};
\node at (axis cs: 0.67,-0.006) {\scriptsize{$v_{0}$}};
\end{axis}
\end{tikzpicture}
\quad\quad
\begin{tikzpicture}
\begin{axis}[
  scaled ticks=false,
  tick label style={font=\scriptsize},
  axis y line=left, axis x line=middle,
  xtick={0.4, 0.6666667},
  ytick={0},
  xticklabels={ ,  },
  yticklabels={$0$},
  xlabel={\small $v$},
  ylabel={\small $w$},
every axis x label/.style={  at={(ticklabel* cs:1.0)},  anchor=west,
},
every axis y label/.style={  at={(ticklabel* cs:1.0)},  anchor=south,
},
  width=6cm,
  height=6cm,
  xmin=0,
  xmax=1.1,
  ymin=-2,
  ymax=2]
\draw [color=gray, dashed, line width=0.3pt] (axis cs: 1,-2)--(axis cs: 1,2);
\addplot[thick,blue] graphics[xmin=0,ymin=-2,xmax=1,ymax=2] {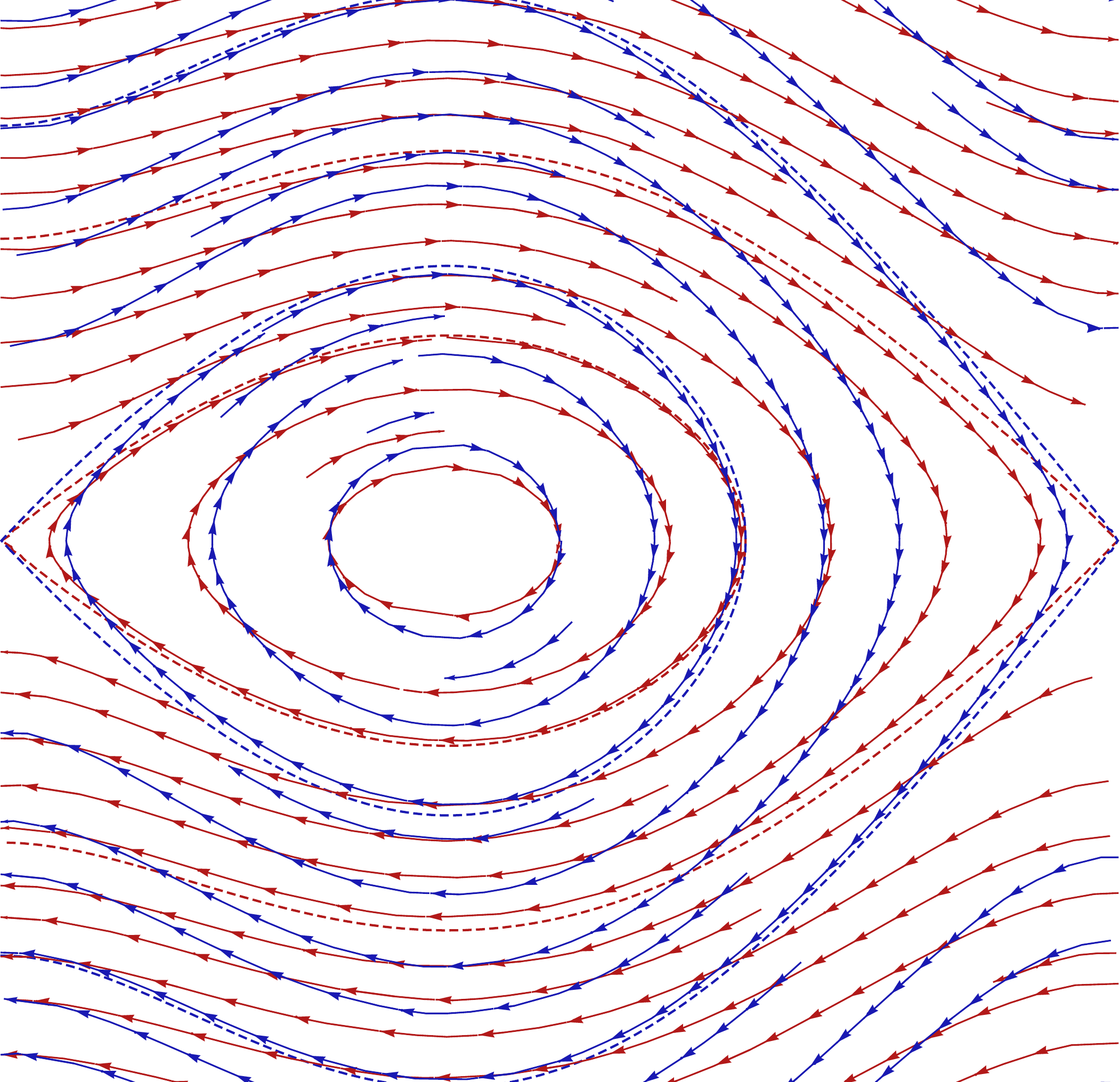};
\fill (axis cs: 0.4,0) circle (0.8pt);
\fill (axis cs: 0.6666667,0) circle (0.8pt);
\node at (axis cs: 0.4,-0.14) {\scriptsize{$\alpha$}};
\node at (axis cs: 0.7,-0.14) {\scriptsize{$v_{0}$}};
\node at (axis cs: 1.03,-0.14) {\scriptsize{$1$}};
\end{axis}
\end{tikzpicture}
\captionof{figure}{Qualitative graph of $f$ (on the left) and representation of the level lines $\mathcal{E}(v,w) = \sqrt{1+w^{2}} - 1 + c\dfrac{1}{\delta} F(v)$ for two values of the constant $c$ (on the right; the blue lines correspond to a higher value of $c$).}
\label{fig-01}
\end{figure}

We explicitly remark that the statement of Proposition~\ref{prop-intro} does not depend on the fixed value of $\delta$. 

Our first goal is to extend Proposition~\ref{prop-intro} to the case of a nonconstant weight $q$ satisfying more general assumptions (see Theorems~\ref{th-main}--\ref{thm-nonesistenza}). In this respect, our results can be compared with the statements in \cite{BoCoNy-17}, where \ref{hp-f-2-b} is assumed (see Remark~\ref{rem-F(1)=0}).
The assumption of balancedness for $f$, exploited therein to reason through variational techniques, is however quite specific and does not survive under small perturbations of the reaction term, while we are here interested in results holding for general bistable nonlinearities. We thus seek heteroclinics and homoclinics adopting a different technique, based on a shooting method and on a precise phase-plane analysis, with the drawback of having to impose some more restrictive assumptions than in \cite{BoCoNy-17}. Since the problem is non-autonomous, we will indeed have to suitably control a family of branches of solutions in order to have or prevent intersections between them.

Subsequently, we investigate the asymptotic behavior of the constructed solutions for $\delta\to 0^{+}$ and for $\delta\to+\infty$; namely, by interpreting $\delta$ as a diffusion parameter, we consider a \emph{vanishing} or a \emph{large} diffusion limit, respectively (see, e.g., \cite{Ga-22-pp, HiKi-16} for a similar procedure in the framework of solutions of traveling front type).
The a priori bound $\vert v' \vert < 1$ for the derivative of regular solutions, coming from the expression of \eqref{eq-intro}, ensures that there is uniform convergence of the considered profiles to a Lipschitz continuous function, whose shape will here be our object of interest, on the lines of the considerations, e.g., in \cite{BoFe-20, Ga-22-pp, GaSo-23}. For the Minkowski operator, it is quite usual (even if some exceptions may arise, see for instance \cite{Ga-22-pp}) to expect limit profiles which are piecewise linear with slope $0$ or $1$, since the small parameter may compensate the diverging denominator of the second-order operator when $v' \to 1$. Indeed, we will prove a result of this kind (see Theorem~\ref{th-main-2} and Theorem~\ref{th-main-3}).

The main results of this paper are contained in Section~\ref{section-3}. The preceding Section~\ref{section-2}, where we review the autonomous case (i.e., $q\equiv1$), has the purpose of providing some motivation and some preliminary analysis for our study and exemplify, also through some pictures, the relative scenarios.

\section{Motivation: the autonomous case}\label{section-2}

In this section, we briefly review the existence and the qualitative properties of homoclinics and heteroclinics for \eqref{eq-intro} in the autonomous case $q \equiv 1$. The results are an immediate consequence of an elementary phase-plane analysis and will serve as a basis for the study in Section \ref{section-3}; for the reader's convenience, we will sometimes give some brief comments about the proofs, whenever not immediate.
Notice that here the solutions of \eqref{eq-intro} are of class $\mathcal{C}^{2}$.

To be more precise, recalling \eqref{op-intro}, we are dealing with the autonomous equation
\begin{equation}\label{eq-autonoma}
\delta \bigl{(} \phi(v'(t)) \bigr{)}' + f(v(t))= 0, \quad t\in\mathbb{R},
\end{equation}
or equivalently with the autonomous planar system
\begin{equation}\label{pl-syst}
\begin{cases}
\, v' = \phi^{-1}(w) = \dfrac{w}{\sqrt{1+w^{2}}},
\\
\, w' = -\dfrac{1}{\delta} f(v),
\end{cases}
\end{equation}
in dependence on the diffusion parameter $\delta > 0$. The associated energy function (vanishing at $(0,0)$) is given by
\begin{equation}\label{def-energy}
\mathcal{E}(v,w) = \sqrt{1+w^{2}} - 1 + \dfrac{1}{\delta} F(v).
\end{equation} 
Homoclinic (or heteroclinic, according to the assumption fulfilled by $F$) solutions for \eqref{eq-autonoma} are simply obtained by considering the orbit of \eqref{pl-syst} through $(0, 0)$, which intersects the $v$-axis in $v_{0}$ (recall that $v_{0}$ is such that $F(v_{0})=0$ and $v_{0}=1$ in case $F(1)=0$). Indeed, the Lipschitz continuity of $f$ guarantees that the equilibrium $(0, 0)$ can only be reached in infinite time. In other words, the time 
\begin{equation*}
T_{0,\delta} := \int_{0}^{v_{0}} \dfrac{\delta-F(v)}{\sqrt{F(v)(F(v)-2\delta)}} \,\mathrm{d}v
\end{equation*}
spent by the orbit to travel from $(0, 0)$ to $(v_{0}, 0)$ satisfies
\begin{equation*}
T_{0,\delta}=+\infty, \quad \text{for every $\delta > 0$.}
\end{equation*}
We first assume $F(1) > 0$, leaving to the end of the section some comments about the case of a \emph{balanced} reaction term (that is, $F(1)=0$). We preliminarily observe that, for fixed $\delta > 0$, any homoclinic to $0$ can naturally be seen as the limit of periodic solutions of \eqref{eq-autonoma}. Indeed, the assumptions on $f$ imply that, for $\gamma \in \mathopen{]}0,\alpha\mathclose{[}$, the function 
$F_\gamma(v):=F(v)-F(\gamma)$
has exactly two zeros in the interval $\mathopen{[}0,1\mathclose{]}$, given by $\gamma$ and a second value $\zeta(\gamma) \in \mathopen{]}\alpha, v_{0}\mathclose{[}$ (see also the left picture in Figure~\ref{fig-01}), for which it is clear that $\lim_{\gamma\to0^{+}} \zeta(\gamma)=v_{0}$.
For future convenience, we can extend the definition of $F_\gamma$ for $\gamma=0$ by setting $F_{0}:=F$.

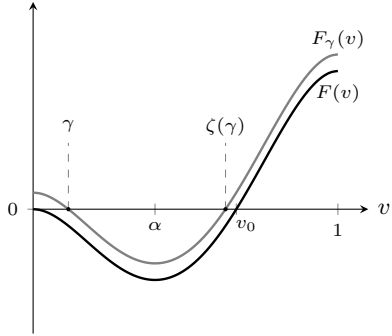
\begin{figure}[htb]
\begin{tikzpicture}
\begin{axis}[
  scaled ticks=false,
  tick label style={font=\scriptsize},
  axis y line=left, axis x line=middle,
  xtick={0.4, 0.6666667, 1},
  ytick={0},
  xticklabels={ , , $1$},
  yticklabels={$0$},
  xlabel={\small $v$},
  ylabel={},
every axis x label/.style={  at={(ticklabel* cs:1.0)},  anchor=west,
},
every axis y label/.style={  at={(ticklabel* cs:1.0)},  anchor=south,
},
  width=6cm,
  height=6cm,
  xmin=0,
  xmax=1.1,
  ymin=-0.015,
  ymax=0.025]
\draw [color=gray, dashed, line width=0.3pt] (axis cs: 0.115724,0)--(axis cs: 0.115724, 0.008);
\draw [color=gray, dashed, line width=0.3pt] (axis cs: 0.631374,0)--(axis cs: 0.631374, 0.008);
\addplot [color=black,line width=0.9pt,smooth] coordinates {(0., 0.) (0.02, -0.0000763067) (0.04, -0.000290773) (0.06,  -0.00062244) (0.08, -0.00105131) (0.1, -0.00155833) (0.12,  -0.00212544) (0.14, -0.00273551) (0.16, -0.00337237) (0.18,  -0.00402084) (0.2, -0.00466667) (0.22, -0.00529657) (0.24,  -0.00589824) (0.26, -0.00646031) (0.28, -0.00697237) (0.3,  -0.007425) (0.32, -0.00780971) (0.34, -0.00811897) (0.36,  -0.00834624) (0.38, -0.00848591) (0.4, -0.00853333) (0.42,  -0.00848484) (0.44, -0.00833771) (0.46, -0.00809017) (0.48,  -0.00774144) (0.5, -0.00729167) (0.52, -0.00674197) (0.54,  -0.00609444) (0.56, -0.00535211) (0.58, -0.00451897) (0.6,  -0.0036) (0.62, -0.00260111) (0.64, -0.00152917) (0.66,  -0.00039204) (0.68, 0.000801493) (0.7, 0.00204167) (0.72,  0.00331776) (0.74, 0.00461809) (0.76, 0.00593003) (0.78, 0.00723996) (0.8, 0.00853333) (0.82, 0.00979463) (0.84, 0.0110074) (0.86, 0.0121541) (0.88, 0.0132164) (0.9, 0.014175) (0.92, 0.0150095) (0.94, 0.0156986) (0.96, 0.0162202) (0.98, 0.0165509) (1., 0.0166667)};
\addplot [color=gray,line width=0.9pt,smooth] coordinates {(0., 0.002) (0.02, 0.00192369) (0.04, 0.00170923) (0.06, 0.00137756) (0.08, 0.000948693) (0.1, 0.000441667) (0.12, -0.00012544) (0.14, -0.000735507) (0.16, -0.00137237) (0.18, -0.00202084) (0.2, -0.00266667) (0.22, -0.00329657) (0.24, -0.00389824) (0.26, -0.00446031) (0.28, -0.00497237) (0.3, -0.005425) (0.32, -0.00580971) (0.34, -0.00611897) (0.36, -0.00634624) (0.38, -0.00648591) (0.4, -0.00653333) (0.42, -0.00648484) (0.44, -0.00633771) (0.46, -0.00609017) (0.48, -0.00574144) (0.5, -0.00529167) (0.52, -0.00474197) (0.54, -0.00409444) (0.56, -0.00335211) (0.58, -0.00251897) (0.6, -0.0016) (0.62, -0.000601107) (0.64, 0.000470827) (0.66, 0.00160796) (0.68, 0.00280149) (0.7, 0.00404167) (0.72, 0.00531776) (0.74, 0.00661809) (0.76, 0.00793003) (0.78, 0.00923996) (0.8, 0.0105333) (0.82, 0.0117946) (0.84, 0.0130074) (0.86, 0.0141541) (0.88, 0.0152164) (0.9, 0.016175) (0.92, 0.0170095) (0.94, 0.0176986) (0.96, 0.0182202) (0.98, 0.0185509) (1., 0.0186667)};
\node at (axis cs: 0.115724,0.01) {\scriptsize{$\gamma$}};
\node at (axis cs: 0.631374,0.01) {\scriptsize{$\zeta(\gamma)$}};
\fill (axis cs: 0.115724,0) circle (0.8pt);
\fill (axis cs: 0.631374,0) circle (0.8pt);
\node at (axis cs: 1,0.014) {\scriptsize{$F(v)$}};
\node at (axis cs: 1,0.0205) {\scriptsize{$F_{\gamma}(v)$}};
\node at (axis cs: 0.4,-0.002) {\scriptsize{$\alpha$}};
\node at (axis cs: 0.7,-0.002) {\scriptsize{$v_{0}$}};
\end{axis}
\end{tikzpicture}
\captionof{figure}{Qualitative graph of $F_{\gamma}$ for fixed $\gamma\in\mathopen{]}0,\alpha\mathclose{[}$, assuming $F(1) > 0$.}
\label{fig-02}
\end{figure}

The orbit of \eqref{pl-syst} passing through the points $(\gamma, 0)$ and $(\zeta(\gamma), 0)$ corresponds to a periodic solution of \eqref{eq-autonoma} having minimal period
\begin{equation}\label{def-Tgamma}
2T_{\gamma,\delta} = 2\int_{\gamma}^{\zeta(\gamma)} 
\dfrac{\delta-F_{\gamma}(v)}{\sqrt{F_{\gamma}(v)(F_{\gamma}(v)-2\delta)}} \,\mathrm{d}v.
\end{equation}
It is straightforward to check that $T_{\gamma, \delta}$ is finite for every $\gamma > 0$ and every $\delta > 0$; moreover, since
\begin{equation*}
T_{\gamma,\delta} = \int_{0}^{v_{0}} 
\dfrac{\delta-F_{\gamma}(v)}{\sqrt{F_{\gamma}(v)(F_{\gamma}(v)-2\delta)}} \, \mathbbm{1}_{\mathopen{[}\gamma,\zeta(\gamma)\mathclose{]}}(v)\,\mathrm{d}v,
\end{equation*}
(where $\mathbbm{1}_{\mathopen{[}\gamma,\zeta(\gamma)\mathclose{]}}$ denotes the indicator function of the interval $\mathopen{[}\gamma,\zeta(\gamma)\mathclose{]}$), a direct application of Fatou's lemma yields 
$\lim_{\gamma\to 0^{+}} T_{\gamma,\delta} = T_{0, \delta}= +\infty$. Therefore, denoting by $v_{\gamma, \delta}$ and $v_{0, \delta}$, respectively, the solutions of the problems
\begin{equation}\label{eq-v-gamma-delta}
\begin{cases}
\, \delta \bigl{(} \phi(v_{\gamma, \delta}') \bigr{)}' + f(v_{\gamma, \delta})= 0,
\\
\, v_{\gamma, \delta}(0)=\zeta(\gamma), \quad v_{\gamma, \delta}'(0)=0,
\end{cases}
\quad 
\begin{cases}
\, \delta \bigl{(} \phi(v_{0, \delta}') \bigr{)}' + f(v_{0, \delta})= 0,
\\
\, v_{0, \delta}(0)=v_{0}, \quad v_{0, \delta}'(0)=0,
\end{cases}
\end{equation}
the continuous dependence on the initial data ensures that 
$v_{\gamma,\delta} \to v_{0,\delta}$ in $\mathcal{C}^{2}_{\mathrm{loc}}(\mathbb{R})$ as $\gamma\to 0^{+}$, so that the homoclinic solution passing through $v_{0}$ can be seen as a (locally uniform) limit of $2T_{\gamma, \delta}$-periodic solutions. 

We now deepen our analysis of the asymptotic behavior of the periodic solutions $v_{\gamma, \delta}$ defined in \eqref{eq-v-gamma-delta} as $\delta$ and $\gamma$ vary.
First, we discuss the behavior of $v_{0, \delta}$ for $\delta \to 0^{+}$.

\begin{proposition}[$\gamma=0$ and $\delta\to 0^{+}$]\label{th-gamma-0-delta-0}
For $\delta\to 0^{+}$, it holds that $v_{0,\delta} \to \hat{v}_{0,0}$ locally uniformly in $t$, where
\begin{equation*}
\hat{v}_{0,0}(t) =
\begin{cases}
\, -t+v_{0}, &\text{if $t\in\mathopen{[}0,v_{0}\mathclose{]}$,}
\\
\, t+v_{0}, &\text{if $t\in\mathopen{[}-v_{0},0\mathclose{]}$,}
\\
\, 0, &\text{if $t\in\mathopen{]}-\infty,-v_{0}\mathclose{]}\cup\mathopen{[}v_{0},+\infty\mathclose{[}$.}
\end{cases}
\end{equation*}
\end{proposition}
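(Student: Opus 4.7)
The plan is to combine energy conservation with the Arzelà--Ascoli theorem applied to the family $\{v_{0,\delta}\}_{\delta>0}$, which is uniformly Lipschitz (thanks to $|v_{0,\delta}'|<1$) and uniformly bounded (since $v_{0,\delta}\in[0,v_{0}]$).

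By the time-reversal symmetry of \eqref{eq-autonoma} and the initial conditions $v(0)=v_{0}$, $v'(0)=0$, the solution $v_{0,\delta}$ is even in $t$ and strictly decreasing on $[0,+\infty)$ from $v_{0}$ to $0$. Conservation of $\mathcal{E}$ (which vanishes at $(v_{0},0)$) produces, with $G:=-F\geq 0$ on $[0,v_{0}]$, the identity $(v_{0,\delta}')^{2}=G(v_{0,\delta})(G(v_{0,\delta})+2\delta)/(\delta+G(v_{0,\delta}))^{2}$, so, denoting by $t_{\delta}$ the inverse of $v_{0,\delta}$ on $[0,+\infty)$,
\begin{equation*}
t_{\delta}(v)=\int_{v}^{v_{0}}\frac{\delta+G(s)}{\sqrt{G(s)(G(s)+2\delta)}}\,\mathrm{d}s, \qquad v\in(0,v_{0}].
\end{equation*}
The integrand is pointwise $\geq 1$ (since $(\delta+G)^{2}-G(G+2\delta)=\delta^{2}$), and its excess over $1$ is bounded by $\delta/\sqrt{G(G+2\delta)}$.

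The core step is to prove $t_{\delta}(v)\to v_{0}-v$ as $\delta\to 0^{+}$, uniformly for $v\in[a,v_{0}]$, for every $a\in(0,v_{0})$. Equivalently, I want $\int_{a}^{v_{0}}\delta/\sqrt{G(G+2\delta)}\,\mathrm{d}s\to 0$. I would fix a small $\eta>0$ and split the integral as $[a,v_{0}-\eta]\cup[v_{0}-\eta,v_{0}]$. On $[a,v_{0}-\eta]$, $G$ is bounded below by a positive constant, so the integrand is $O(\delta)$. On $[v_{0}-\eta,v_{0}]$, assumption \ref{hp-f-2} places $v_{0}\in(\beta,1)$ (whence $f(v_{0})>0$), so $G(s)\geq\tfrac{1}{2}f(v_{0})(v_{0}-s)$ for $\eta$ small enough; using also $G+2\delta\geq 2\delta$, the integrand is at most $C\sqrt{\delta/(v_{0}-s)}$, whose integral over $[v_{0}-\eta,v_{0}]$ is $O(\sqrt{\delta\,\eta})$. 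Letting $\delta\to 0^{+}$ and then $\eta\to 0^{+}$ proves the claim.

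To conclude, pick any sequence $\delta_{n}\to 0^{+}$. By Arzelà--Ascoli, a subsequence $v_{0,\delta_{n_{k}}}$ converges locally uniformly to a function $V\colon\mathbb{R}\to[0,v_{0}]$ that is even, $1$-Lipschitz, non-increasing on $[0,+\infty)$, and satisfies $V(0)=v_{0}$. For $v\in(0,v_{0}]$, combining $v_{0,\delta_{n_{k}}}(t_{\delta_{n_{k}}}(v))=v$ with the uniform limit $t_{\delta_{n_{k}}}(v)\to v_{0}-v$ and the local uniform convergence of $v_{0,\delta_{n_{k}}}$ gives $V(v_{0}-v)=v$. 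Thus $V(t)=v_{0}-t$ on $[0,v_{0}]$, and monotonicity together with non-negativity then force $V\equiv 0$ on $[v_{0},+\infty)$, so $V=\hat{v}_{0,0}$. Since every subsequential limit coincides with $\hat{v}_{0,0}$, the whole family converges locally uniformly. The main technical obstacle is the apparent singular behaviour of the time integrand near $s=v_{0}$: the quantitative lower bound $G(s)\geq\tfrac{1}{2}f(v_{0})(v_{0}-s)$, which hinges on $f(v_{0})>0$ (i.e.~on \ref{hp-f-2}), is what rescues the estimate and turns a potential blow-up into an $O(\sqrt{\delta})$ boundary contribution.
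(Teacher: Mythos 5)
Your proposal is correct, but it follows a genuinely different route from the paper. The paper also starts from Ascoli--Arzel\`a, but then works at the level of the derivative: via the reduction $y_\delta(v)=-F(v)/\delta\to+\infty$ it gets $v_{0,\delta}'\to 1$ pointwise away from the values $0$ and $v_0$, checks separately (through the finiteness of the transition-time integral from $v_0$ to $v_0/2$) that the limit is not constant near $t=0$, and then identifies the limit by extracting a weak $L^2_{\mathrm{loc}}$ limit $w$ of $\{v_{0,\delta}'\}$ and squeezing $\int w$ between $\int 1$ and itself via dominated convergence. You instead invert the roles of $t$ and $v$ and show directly, from conservation of $\mathcal{E}$, that the time map $t_\delta(v)=\int_v^{v_0}(\delta+G)/\sqrt{G(G+2\delta)}\,\mathrm{d}s$ converges to $v_0-v$ uniformly on $[a,v_0]$, which pins down the limit profile without any weak-convergence machinery and, as a bonus, yields a quantitative rate $O(\delta)+O(\sqrt{\delta\eta})$; it also absorbs the paper's separate ``non-constancy near $t=0$'' step into a single estimate. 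The price is that your argument leans on the exact first integral (hence on $q\equiv 1$) and on the local lower bound $G(s)\geq\tfrac12 f(v_0)(v_0-s)$, i.e.\ on $f(v_0)>0$; this is harmless here (in the bistable setting $v_0\in\mathopen{]}\beta,1\mathclose{[}$ under \ref{hp-f-2}, and the paper implicitly needs the same non-degeneracy to claim finiteness of its transition-time integral), but it is worth noting that the paper's weak-derivative scheme is the one that transfers to the non-autonomous results of Section~\ref{section-3.4}, where no explicit energy is available. Minor points, all fine: evenness of $v_{0,\delta}$ follows from reversibility and uniqueness, and the passage from $V(v_0-v)=v$ to $V=\hat{v}_{0,0}$ via the uniform Lipschitz bound and monotonicity is sound.
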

For the proof, it turns useful to deal with \eqref{eq-y} via the change of variable \eqref{eq-first}, valid in each monotonicity interval of a solution $v$ of \eqref{eq-autonoma}. 
In particular, fixed $\gamma \in\mathopen{[}0,\alpha\mathclose{[}$ and $\delta>0$, we denote by $y_{\gamma,\delta}$ the unique solution of \eqref{eq-y} such that $y_{\gamma, \delta}(\gamma)=0$, namely
\begin{equation*}
y_{\gamma,\delta}(v) = -\dfrac{1}{\delta} F_{\gamma}(v), \quad v\in\mathopen{[}0,1\mathclose{]}.
\end{equation*}
We observe that $y_{\gamma,\delta}$ vanishes in correspondence of $\gamma$ and $\zeta(\gamma)$ and $\dot{y}_{\gamma,\delta}(v)=0$ if and only if $v\in\{0,\alpha,1\}$. 

\begin{proof}[Proof of Proposition~\ref{th-gamma-0-delta-0}]
We first notice that since $0 \leq v_{0, \delta}(t) \leq v_{0}$ and $\vert v_{0, \delta}'(t) \vert \leq 1$ for every $t \in \mathbb{R}$, by the Ascoli--Arzel\`a theorem we deduce that there exists a Lipschitz continuous function $\hat{v}_{0}$ such that $v_{0, \delta} \to \hat{v}_{0}$ locally uniformly in $\mathbb{R}$, for $\delta \to 0^{+}$. 
Moreover, $\hat{v}_{0}(0)=v_{0}>0$ and thus $\hat{v}_{0}\not\equiv0$. To simplify the argument, we now show that $\hat{v}_{0}(t)$ coincides with $\hat{v}_{0, 0}(t)$ for every $t \in \mathopen{]}-\infty, 0\mathclose{[}$, in order to take advantage of the positive sign of $v_{0, \delta}'$ therein; of course, a completely analogous argument works for $t \in \mathopen{]}0,+\infty\mathclose{[}$. 

By the discussion after formula \eqref{eq-y} and recalling that the homoclinic $v_{0, \delta}$ satisfies $v_{0, \delta}(0)=v_{0}$ and $v_{0, \delta}'(0)=0$, we then notice that the corresponding $y_\delta$ defined by \eqref{eq-first} satisfies the two-point problem 
\begin{equation*}
\begin{cases}
\, \dot{y}=-\dfrac{f(v)}{\delta},
\\
\, y(0)=0, \quad y(v_{0})=0,
\end{cases}
\end{equation*}
that is, $y_\delta(v)=-\frac{1}{\delta}F(v)$. Consequently, $y_\delta(v) \to +\infty$ as $\delta \to 0^{+}$ for every $v \in \mathopen{]}0, v_{0}\mathclose{[}$.
Since from \eqref{eq-y} one has that 
\begin{equation*}
v'_{0, \delta}(t)=\frac{\sqrt{y_\delta(v_{0, \delta}(t))(2+y_\delta(v_{0, \delta}(t)))}}{1+y_\delta(v_{0, \delta}(t))},
\end{equation*}
it follows that $\lim_{\delta \to 0^{+}} v'_{0, \delta}(t)=1$ for every $t \in \mathopen{]}-\infty, 0\mathclose{[}$ such that $\lim_{\delta \to 0^{+}} v_{0, \delta}(t) \not\in\{0, v_0\}$. 
Next we remark that the quantity
\begin{equation*}
\int_{\frac{v_{0}}{2}}^{v_{0}} \dfrac{\delta-F(v)}{\sqrt{F(v)(F(v)-2\delta)}} \,\mathrm{d}v,
\end{equation*}
representing the time needed for a solution $v$ to move from the value $v_{0}$ to the value $v_{0}/2$, is finite (and it converges for $\delta \to 0^{+}$, since it is monotone increasing with respect to $\delta$) and positive. Therefore, $v_{0, \delta}\not\equiv v_{0}$ in any neighbourhood of $0$ and, consequently, $|\hat{v}_{0}'|\equiv1$ in a neighbourhood of $0$.

Furthermore, $\{v_{0, \delta}'\}_\delta$ is bounded in $L^2_{\mathrm{loc}}(\mathbb{R})$, so (up to subsequences) it has a weak limit $w \in L^2_{\mathrm{loc}}(\mathbb{R})$ satisfying $0 \leq w \leq 1$, which coincides with the distributional derivative of $\hat{v}_{0}$. Thanks to the dominated convergence theorem, fixed an interval $\mathopen{[}t_{0}, t_1\mathclose{]} \subseteq \mathopen{]}-\infty,0\mathclose{[}$ we then have
\begin{align*}
\int_{t_{0}}^{t_1} \, \mathrm{d}s \geq \int_{t_{0}}^{t_1} w(s) \, \mathrm{d}s 
&= \hat{v}_{0}(t_1)-\hat{v}_{0}(t_{0}) =\lim_{\delta \to 0^{+}} (v_{0, \delta}(t_1)-v_{0, \delta}(t_{0}))
\\
&=\lim_{\delta \to 0^{+}}\int_{t_{0}}^{t_1} v_{0, \delta}'(s) \, \mathrm{d}s = \int_{t_{0}}^{t_1} \, \mathrm{d}s
 \end{align*}
and hence $w(t)=1$ for almost every $t \in [t_{0}, t_1]$. Being $\hat{v}_{0}$ absolutely continuous, for every $t\in\mathopen{]}-\infty,0\mathclose{[}$ we have that
\begin{equation*}
v_{0}-\hat{v}_{0}(t)=\hat{v}_{0}(0)-\hat{v}_{0}(t)=\int_t^{0} w(s) \,\mathrm{d}s=-t,
\end{equation*}
whence the conclusion, since $\hat{v}_{0}$ is non-decreasing in $\mathopen{]}-\infty,0\mathclose{]}$. The same argument holds for $t \in \mathopen{]}0,+\infty\mathclose{[}$, with reversed sign.
\end{proof}

We now discuss which picture appears inverting the way the two parameters $\gamma$ and $\delta$ converge to $0$: first, working at fixed $\gamma$ and sending $\delta \to 0^{+}$, we obtain the following.

\begin{proposition}[$\gamma\in\mathopen{]}0,\alpha\mathclose{[}$ and $\delta\to 0^{+}$]\label{th-delta-0}
For every $\gamma\in\mathopen{]}0,\alpha\mathclose{[}$, it holds that $v_{\gamma,\delta} \to v_{\gamma,0}$ locally uniformly in $t$ as $\delta\to 0^{+}$, where
\begin{equation}\label{def-vgamma0}
v_{\gamma,0}(t) =
\begin{cases}
\, -t+\zeta(\gamma), &\text{if $t\in\mathopen{[}0,\zeta(\gamma)-\gamma\mathclose{]}$,}
\\
\, t+\zeta(\gamma), &\text{if $t\in\mathopen{[}-\zeta(\gamma)+\gamma,0\mathclose{]}$,}
\end{cases}
\end{equation}
extended by $2(\zeta(\gamma)-\gamma)$-periodicity.
\end{proposition}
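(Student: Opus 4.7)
The plan is to follow the strategy of the proof of Proposition~\ref{th-gamma-0-delta-0}, adapting it to a periodic rather than homoclinic profile. Since $\gamma \le v_{\gamma,\delta}(t) \le \zeta(\gamma)$ and $|v'_{\gamma,\delta}(t)|\le 1$ uniformly in $t\in\mathbb{R}$ and $\delta>0$, the Ascoli--Arzel\`a theorem furnishes, up to a subsequence, a Lipschitz continuous function $\hat v_\gamma$ with $v_{\gamma,\delta}\to \hat v_\gamma$ locally uniformly in $\mathbb{R}$, and the task is to identify $\hat v_\gamma$ with $v_{\gamma,0}$.

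The key preliminary step is to prove that the half-period satisfies $T_{\gamma,\delta}\to \zeta(\gamma)-\gamma$ as $\delta\to 0^+$. Setting $s:=|F_\gamma(v)|>0$ for $v\in(\gamma,\zeta(\gamma))$, the integrand in \eqref{def-Tgamma} reads
\[
\frac{\delta+s}{\sqrt{s(s+2\delta)}}=\sqrt{1+\frac{\delta^{2}}{s(s+2\delta)}}\ge 1,
\]
and converges pointwise to $1$ as $\delta\to 0^+$. On any subinterval $[\gamma+\eta,\zeta(\gamma)-\eta]$, $s$ is bounded away from $0$, so the convergence is uniform and the central portion of $T_{\gamma,\delta}$ tends to $\zeta(\gamma)-\gamma-2\eta$. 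Near the endpoints, the Lipschitz continuity of $f$ combined with $f(\gamma)\ne 0\ne f(\zeta(\gamma))$ yields $|F_\gamma(v)|\ge c\min\{v-\gamma,\,\zeta(\gamma)-v\}$ for a suitable $c>0$, and using $\sqrt{s(s+2\delta)}\ge\sqrt{2s\delta}$ together with $\sqrt{1+x}\le 1+\sqrt{x}$ one bounds the two endpoint contributions by $2\eta+C\sqrt{\delta\eta}$. Letting $\delta\to 0^+$ at fixed $\eta$ gives $\limsup_\delta T_{\gamma,\delta}\le \zeta(\gamma)-\gamma$, which combined with the lower bound $T_{\gamma,\delta}\ge \zeta(\gamma)-\gamma$ (from the integrand being $\ge 1$) yields the claim.

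The same splitting applied to the time-to-reach map along the first decreasing branch of $v_{\gamma,\delta}$,
\[
t_\delta(v):=\int_v^{\zeta(\gamma)}\frac{\delta-F_\gamma(u)}{\sqrt{F_\gamma(u)(F_\gamma(u)-2\delta)}}\,\mathrm{d}u,
\]
produces $t_\delta(v)\to \zeta(\gamma)-v$ for every $v\in(\gamma,\zeta(\gamma))$; since $v_{\gamma,\delta}(t_\delta(v))=v$ and $|v'_{\gamma,\delta}|\le 1$, the estimate $|v_{\gamma,\delta}(t)-v|\le|t-t_\delta(v)|$ passes to the limit to give $\hat v_\gamma(t)=\zeta(\gamma)-t$ on $[0,\zeta(\gamma)-\gamma]$. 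Since the initial datum $v'_{\gamma,\delta}(0)=0$ forces $v_{\gamma,\delta}$ to be even by the symmetry $t\mapsto -t$ of \eqref{eq-autonoma}, one obtains $\hat v_\gamma(t)=\zeta(\gamma)+t$ on $[-(\zeta(\gamma)-\gamma),0]$. Finally, the $2T_{\gamma,\delta}$-periodicity of $v_{\gamma,\delta}$ together with $T_{\gamma,\delta}\to \zeta(\gamma)-\gamma$ and the Lipschitz continuity of $v_{\gamma,0}$ extends the identification $\hat v_\gamma=v_{\gamma,0}$ to every compact subset of $\mathbb{R}$, since on such a set only finitely many periods intervene and the cumulative shift error $2k\,(T_{\gamma,\delta}-(\zeta(\gamma)-\gamma))$ stays infinitesimal. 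The main technical obstacle is the uniform control of the integrand near the turning points $\gamma$ and $\zeta(\gamma)$: the integrand is unbounded as $\delta\to 0^+$ there, and only the linear vanishing of $F_\gamma$ inherited from the Lipschitz bound on $f$ keeps the endpoint contributions of order $\sqrt{\delta}$.
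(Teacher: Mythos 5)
Your proof is correct, and while its skeleton (compactness via Ascoli--Arzel\`a, limit of the half-period $T_{\gamma,\delta}$, identification on one period) matches the paper's, the two key steps are executed differently. For the limit of \eqref{def-Tgamma} the paper simply invokes the dominated convergence theorem (which is legitimate, since the integrand $\sqrt{1+\delta^{2}/(s(s+2\delta))}$ is monotone increasing in $\delta$ and hence dominated by an integrable function for $\delta\leq\delta_{0}$), whereas you prove the same limit by splitting off the endpoints and using the linear vanishing of $F_{\gamma}$ at $\gamma$ and $\zeta(\gamma)$ to bound their contribution by $2\eta+C\sqrt{\delta\eta}$; your version is more explicit and makes transparent exactly where the nondegeneracy $f(\gamma)\neq 0\neq f(\zeta(\gamma))$ (equivalently, the finiteness of $T_{\gamma,\delta}$ asserted in the paper) enters. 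More substantially, to identify the limit profile on a monotonicity branch the paper reuses the weak-$L^{2}$ argument of Proposition~\ref{th-gamma-0-delta-0} to show $|\hat v_{\gamma}'|\equiv 1$ and then pins down the profile through the displacement identities over half-periods, while you instead prove convergence of the time-to-reach map $t_{\delta}(v)\to\zeta(\gamma)-v$ and transfer it to the solution via the equi-Lipschitz bound $|v_{\gamma,\delta}(t)-v|\leq|t-t_{\delta}(v)|$, completing the picture with the evenness of $v_{\gamma,\delta}$ (which follows from reversibility of \eqref{eq-autonoma} and uniqueness, given $v_{\gamma,\delta}'(0)=0$) and the periodicity bookkeeping. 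Your route has the advantage of yielding pointwise convergence directly, without extracting subsequences for the identification, at the cost of being longer; both arguments are sound.
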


\begin{proof}
We can use an argument similar to the one in the previous proof to construct the limit profile $v_{\gamma, 0}$ of $v_{\gamma, \delta}$ for $\delta \to 0^{+}$ and to infer that its slope is everywhere equal to $\pm1$.
By the dominated convergence theorem, moreover, we can pass to the limit for $\delta\to0^{+}$ in \eqref{def-Tgamma} to find
\begin{equation*}
\tau:=\lim_{\delta\to 0^{+}} T_{\gamma,\delta} = \int_{\gamma}^{\zeta(\gamma)} 
\dfrac{-F_{\gamma}(v)}{\sqrt{(-F_{\gamma}(v))^{2}}} \,\mathrm{d}v = \zeta(\gamma)-\gamma.
\end{equation*}
On the other hand, passing to the limit for $\delta \to 0^{+}$ in the equality
$
 v_{\gamma, \delta}(0)-v_{\gamma, \delta}(-T_{\gamma, \delta}) = \zeta(\gamma)-\gamma,
$
using the uniform Lipschitz continuity of $v_{\gamma, \delta}$, we deduce
\begin{equation*}
v_{\gamma, 0}(0)-v_{\gamma, 0}(-\tau)=\zeta(\gamma)-\gamma. 
\end{equation*}
More in general, with the same argument one has, for every integer $k$, 
\begin{equation*}
v_{\gamma, 0}(2k\tau)-v_{\gamma, 0}((2k-1)\tau) = \zeta(\gamma)-\gamma, 
\quad 
v_{\gamma, 0}((2k+1)\tau)-v_{\gamma, 0}(2k\tau) = \gamma - \zeta(\gamma).
\end{equation*}
Being $\vert v_{\gamma, 0}'\vert \equiv 1$, $v_{\gamma,0}$ is periodic with minimal period $2(\zeta(\gamma)-\gamma)$ and the thesis follows.
\end{proof}

Finally, we consider the limit of $v_{\gamma, 0}$ for $\gamma \to 0^{+}$. 
\begin{proposition}[$\delta\to 0^{+}$ and $\gamma\to 0^{+}$]\label{th-delta-0-gamma-0}
It holds that $v_{\gamma,0} \to \check{v}_{0,0}$ uniformly in $t$, as $\gamma\to 0^{+}$, where
\begin{equation}\label{def-v00hat}
\check{v}_{0,0}(t) =
\begin{cases}
\, -t+v_{0}, &\text{if $t\in\mathopen{[}0,v_{0}\mathclose{]}$,}
\\
\, t+v_{0}, &\text{if $t\in\mathopen{[}-v_{0},0\mathclose{]}$,}
\end{cases}
\end{equation}
extended by $2v_{0}$-periodicity.
\end{proposition}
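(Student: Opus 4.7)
The proof would exploit the explicit triangle-wave shape of $v_{\gamma,0}$ established in Proposition~\ref{th-delta-0}, together with the already observed fact that $\lim_{\gamma\to 0^{+}} \zeta(\gamma) = v_{0}$. The starting point is that the family $\{v_{\gamma,0}\}_{\gamma\in\mathopen{]}0,\alpha\mathclose{[}}$ is uniformly bounded in $\mathopen{[}0,v_{0}\mathclose{]}$ and uniformly Lipschitz, since $|v_{\gamma,0}'|\equiv 1$ a.e. By the Ascoli--Arzel\`a theorem, from any sequence $\gamma_{n}\to 0^{+}$ one can extract a subsequence (not relabeled) such that $v_{\gamma_{n},0}\to w$ locally uniformly on $\mathbb{R}$, with $w$ itself $1$-Lipschitz. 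The goal is to identify $w = \check{v}_{0,0}$.

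Next, I would identify $w$ on the fundamental cell $\mathopen{[}-v_{0},v_{0}\mathclose{]}$. Using the explicit expression in \eqref{def-vgamma0}, for every $t\in\mathopen{[}0,v_{0}\mathclose{[}$ one eventually has $t\in\mathopen{[}0,\zeta(\gamma_{n})-\gamma_{n}\mathclose{]}$ (because $\zeta(\gamma_{n})-\gamma_{n}\to v_{0}$), so $v_{\gamma_{n},0}(t) = -t+\zeta(\gamma_{n})\to -t+v_{0}$. A symmetric computation on $\mathopen{[}-v_{0},0\mathclose{]}$ and continuity at $t=\pm v_{0}$ then give $w(t) = v_{0}-|t|$ on $\mathopen{[}-v_{0},v_{0}\mathclose{]}$, which matches $\check{v}_{0,0}$.

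To extend to all of $\mathbb{R}$, I would show that $w$ is $2v_{0}$-periodic. Set $T_{n}:=2(\zeta(\gamma_{n})-\gamma_{n})$, so that $T_{n}\to 2v_{0}$. For every fixed $t\in\mathbb{R}$, the $T_{n}$-periodicity of $v_{\gamma_{n},0}$ gives $v_{\gamma_{n},0}(t+2v_{0}) = v_{\gamma_{n},0}(t+2v_{0}-T_{n})$, and by the $1$-Lipschitz bound
\begin{equation*}
\bigl| v_{\gamma_{n},0}(t+2v_{0}-T_{n}) - v_{\gamma_{n},0}(t) \bigr| \leq |2v_{0}-T_{n}| \to 0.
\end{equation*}
Passing to the limit yields $w(t+2v_{0}) = w(t)$, so $w$ coincides with the $2v_{0}$-periodic extension of $v_{0}-|\cdot|$ on $\mathopen{[}-v_{0},v_{0}\mathclose{]}$, i.e., $w = \check{v}_{0,0}$. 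Since the limit is independent of the subsequence, the entire family $v_{\gamma,0}$ converges to $\check{v}_{0,0}$ locally uniformly on $\mathbb{R}$; combining this with the $2v_{0}$-periodicity of the limit and the quantitative phase-drift estimate above, one upgrades the convergence to the uniform one on any single period (hence on $\mathbb{R}$ modulo the phase shift, which is what ``uniformly in $t$'' should be understood to mean here).

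The only mildly delicate point is the last step, namely controlling the cumulative phase mismatch between $v_{\gamma,0}$, with period $T_{\gamma}$, and $\check{v}_{0,0}$, with period $2v_{0}$; but since both functions are $1$-Lipschitz and $|T_{\gamma}-2v_{0}|\to 0$, this mismatch stays small on a fundamental cell, and the rest follows from periodicity. Everything else is a routine compactness plus explicit-formula argument, paralleling the proof of Proposition~\ref{th-delta-0}.
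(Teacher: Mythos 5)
Your proposal is correct and is essentially a fleshed-out version of the paper's own (one-line) argument, which simply observes that $\zeta(\gamma)\to v_{0}$, hence $\lim_{\delta\to0^{+}}T_{\gamma,\delta}=\zeta(\gamma)-\gamma\to v_{0}$, and concludes from the explicit triangle-wave form of $v_{\gamma,0}$. Your caveat about the cumulative phase drift is well taken: since $2(\zeta(\gamma)-\gamma)\neq 2v_{0}$ in general, the convergence is genuinely only locally uniform (the drift $N\lvert T_{\gamma}-2v_{0}\rvert$ is not small over many periods), a point the paper's proof leaves implicit and which your reading of ``uniformly in $t$'' handles appropriately.
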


\begin{proof}
Since $\lim_{\gamma \to 0^{+}} \zeta(\gamma)=v_{0}$, we deduce $\lim_{\gamma \to 0^{+}} (\lim_{\delta \to 0^{+}} T_{\gamma,\delta})=\lim_{\gamma \to 0^{+}} (\zeta(\gamma)-\gamma)=v_{0}$ and the thesis follows.
\end{proof}

We have thus seen, on the one hand, that the profiles obtained in the limit for $\delta \to 0^{+}$ are piecewise linear, in accord with several results in literature for the Minkowski operator (like, e.g., \cite{BoFe-20}). On the other hand, exchanging the order with which the parameters $\gamma$ and $\delta$ are considered when computing the limit leads to different results. The above presented results are illustrated in Figure~\ref{fig-03}.

\begin{figure}[htb]
\centering
\begin{subfigure}[t]{0.45\textwidth}
\centering
\begin{tikzpicture}[scale=1]
\begin{axis}[
  scaled ticks=false,
  tick label style={font=\scriptsize},
  axis y line=middle, axis x line=bottom,
  xtick={-4,-3,-2,-1,0,1,2,3,4},
  ytick={0},
  xticklabels={ , , , ,$0$, $1$, , ,},
  yticklabels={$0$},
  xlabel={\small $t$},
  ylabel={\small $v(t)$},
every axis x label/.style={  at={(ticklabel* cs:1.0)},  anchor=west,
},
every axis y label/.style={  at={(ticklabel* cs:1.0)},  anchor=south,
},
  width=7cm,
  height=4cm,
  xmin=-5,
  xmax=5.2,
  ymin=0,
  ymax=1.1]
\addplot[thick,blue] graphics[xmin=-5,xmax=5,ymin=0,ymax=0.7] {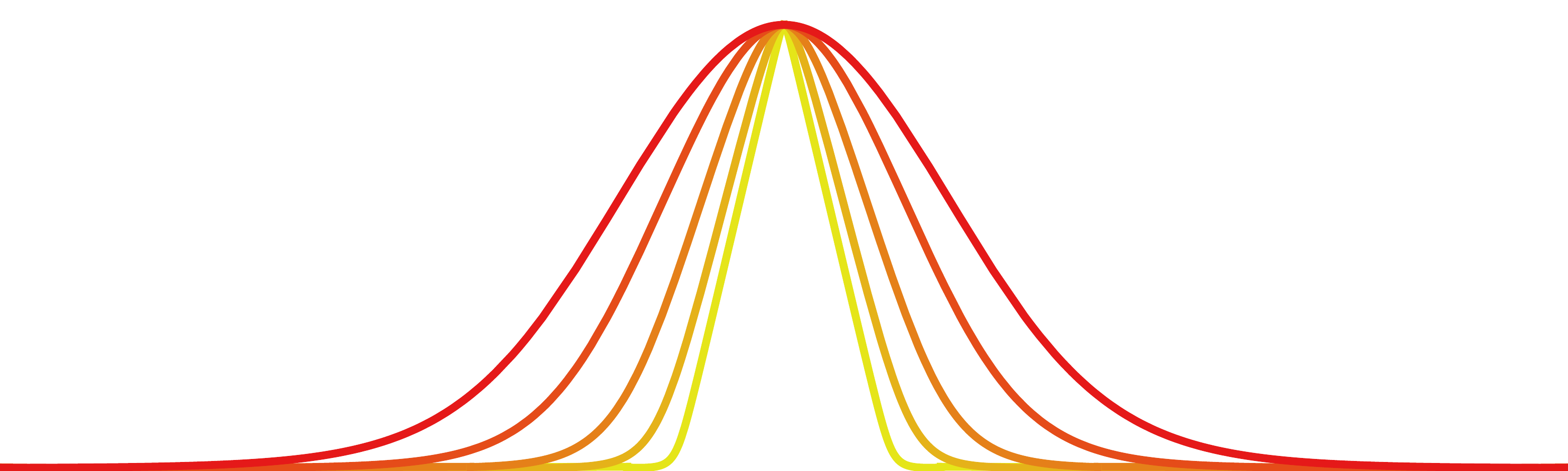};
\draw [color=gray, dashed, line width=0.3pt] (axis cs: -5,1)--(axis cs: 5,1);
\node at (axis cs: 0.4,0.73) {\scriptsize{$v_{0}$}};
\node at (axis cs: -0.25,1) {\scriptsize{$1$}};
\end{axis}
\end{tikzpicture}
\caption{Graphs of the homoclinic solutions $v_{0,\delta}$ of \eqref{eq-v-gamma-delta} ($\gamma=0$) for $\delta\in\{0.1,0.05,$\break$0.002,0.007,0.001\}$ (coloured from red to yellow).} 
\end{subfigure}
\hspace*{\fill}
\begin{subfigure}[t]{0.45\textwidth}
\centering
\begin{tikzpicture}[scale=1]
\begin{axis}[
  scaled ticks=false,
  tick label style={font=\scriptsize},
  axis y line=middle, axis x line=bottom,
  xtick={-10,-5,0,5,10},
  ytick={0},
  xticklabels={ , , $0$, $5$, },
  yticklabels={$0$},
  xlabel={\small $t$},
  ylabel={\small $v(t)$},
every axis x label/.style={  at={(ticklabel* cs:1.0)},  anchor=west,
},
every axis y label/.style={  at={(ticklabel* cs:1.0)},  anchor=south,
},
  width=7cm,
  height=4cm,
  xmin=-15,
  xmax=15.5,
  ymin=0,
  ymax=1.1]
\addplot[thick,blue] graphics[xmin=-15,xmax=15,ymin=0,ymax=0.7] {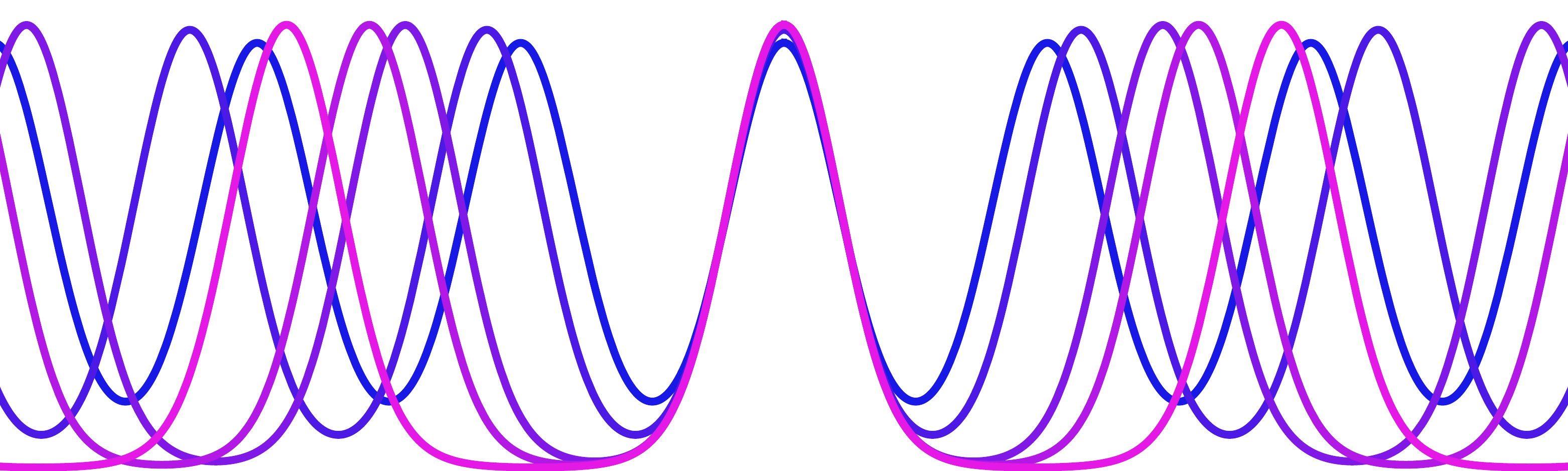};
\draw [color=gray, dashed, line width=0.3pt] (axis cs: -15,1)--(axis cs: 15,1);
\node at (axis cs: 1.3,0.73) {\scriptsize{$v_{0}$}};
\node at (axis cs: -0.25,1) {\scriptsize{$1$}};
\end{axis}
\end{tikzpicture}
\caption{Graphs of the homoclinic solutions $v_{\gamma,\delta}$ of \eqref{eq-v-gamma-delta} for $\delta=0.1$ and $\gamma\in\{0.1,$\break$0.05,0.01,0.005,0.001\}$ (coloured from purple to magenta).}
\end{subfigure}
\\
\vspace{15pt}
\begin{subfigure}[t]{0.45\textwidth}
\centering
\begin{tikzpicture}[scale=1]
\begin{axis}[
  scaled ticks=false,
  tick label style={font=\scriptsize},
  axis y line=middle, axis x line=bottom,
  xtick={-4,-3,-2,-1,0,1,2,3,4},
  ytick={0},
  xticklabels={ , , , ,$0$, $1$, , ,},
  yticklabels={$0$},
  xlabel={\small $t$},
  ylabel={\small $v(t)$},
every axis x label/.style={  at={(ticklabel* cs:1.0)},  anchor=west,
},
every axis y label/.style={  at={(ticklabel* cs:1.0)},  anchor=south,
},
  width=7cm,
  height=4cm,
  xmin=-5,
  xmax=5.2,
  ymin=0,
  ymax=1.1]
\addplot[thick,blue] graphics[xmin=-5,xmax=5,ymin=0,ymax=0.7] {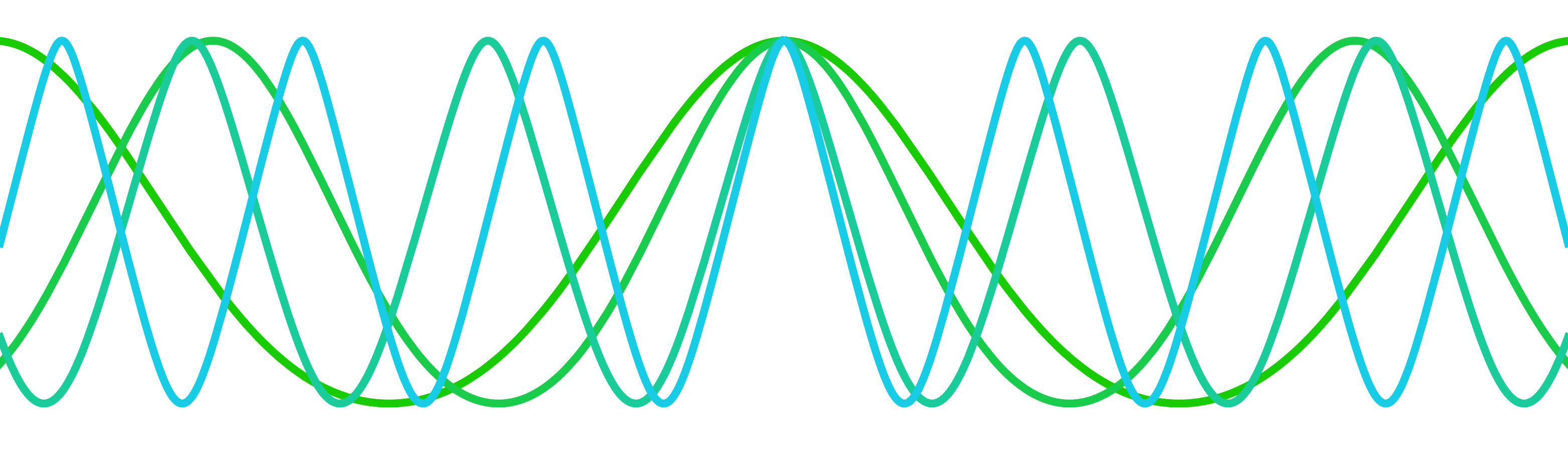};
\draw [color=gray, dashed, line width=0.3pt] (axis cs: -5,1)--(axis cs: 5,1);
\node at (axis cs: 0.6,0.73) {\scriptsize{$\zeta(\gamma)$}};
\node at (axis cs: -0.25,1) {\scriptsize{$1$}};
\end{axis}
\end{tikzpicture}
\caption{Graphs of the homoclinic solutions $v_{\gamma,\delta}$ of \eqref{eq-v-gamma-delta} for $\gamma=0.1$ and $\delta\in\{0.1,$\break$0.05,0.01,0.005\}$ (coloured from green to turquoise).}
\end{subfigure}
\hspace*{\fill}
\begin{subfigure}[t]{0.45\textwidth}
\centering
\begin{tikzpicture}[scale=1]
\begin{axis}[
  scaled ticks=false,
  tick label style={font=\scriptsize},
  axis y line=middle, axis x line=bottom,
  xtick={-4,-3,-2,-1,0,1,2,3,4},
  ytick={0},
  xticklabels={ , , , ,$0$, $1$, , ,},
  yticklabels={$0$},
  xlabel={\small $t$},
  ylabel={\small $v(t)$},
every axis x label/.style={  at={(ticklabel* cs:1.0)},  anchor=west,
},
every axis y label/.style={  at={(ticklabel* cs:1.0)},  anchor=south,
},
  width=7cm,
  height=4cm,
  xmin=-5,
  xmax=5.2,
  ymin=0,
  ymax=1.1]
\addplot[thick,blue] graphics[xmin=-5,xmax=5,ymin=0,ymax=0.7] {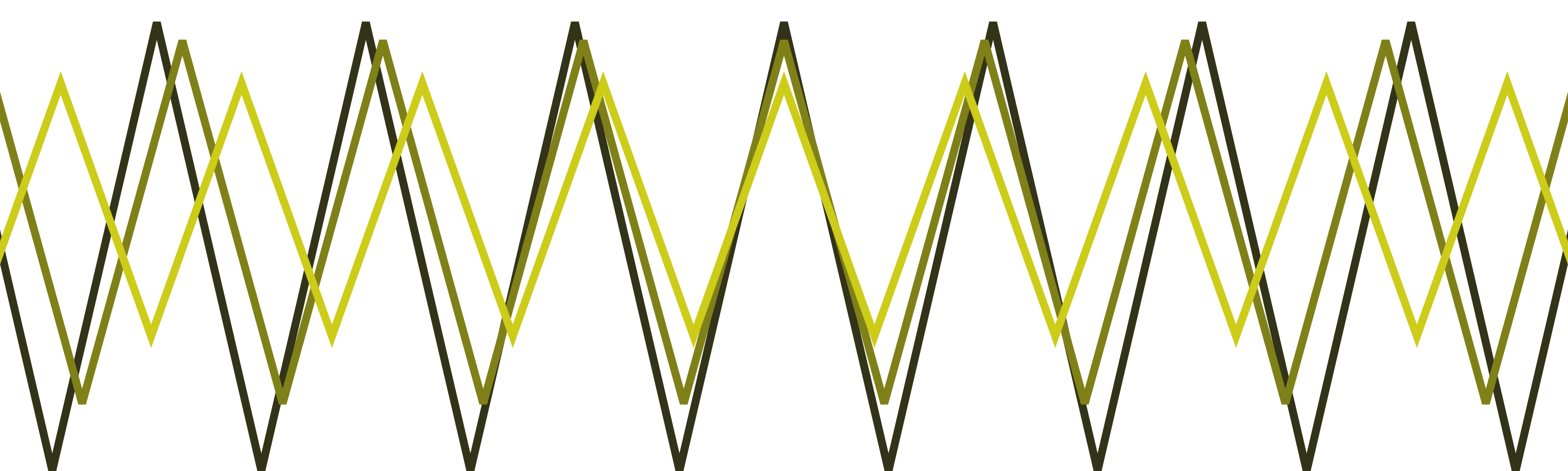};
\draw [color=gray, dashed, line width=0.3pt] (axis cs: -5,1)--(axis cs: 5,1);
\node at (axis cs: 0.45,0.73) {\scriptsize{$v_{0}$}};
\node at (axis cs: -0.25,1) {\scriptsize{$1$}};
\end{axis}
\end{tikzpicture} 
\caption{Qualitative graphs of the homoclinic solutions $v_{\gamma,\delta}$ of \eqref{eq-v-gamma-delta} for ``$\delta=0$'' and $\gamma\in\{0.2,0.1,0\}$ (coloured from ocher to dark green).}
\end{subfigure} 
\captionof{figure}{Qualitative representation of the the homoclinic solutions of \eqref{eq-autonoma} as $\delta$ and $\gamma$ tend to zero, for $f(s)=s(1-s)(s-0.4)$ (and so $F(1)>0$).}
\label{fig-03}
\end{figure}

\begin{remark}[The case $F(1)=0$]\label{rem-2.1}
In case $F(1)=0$ (and hence $v_{0}=1$), for $\gamma=0$ one would have, for every $\delta$, a heteroclinic connection between $0$ and $1$. In this case, with reference to the previous results, one cannot reason on the solution $v_{0, \delta}$ which satisfies $v_{0, \delta}(0)=1$, $v_{0, \delta}'(0)=0$, because by uniqueness this coincides with the constant function $1$. However, it is possible to proceed similarly as for the previous results by defining $v_{\gamma, \delta}$ as the solution of 
\begin{equation}\label{eq-heter-gamma-delta}
\begin{cases}
\, \delta \bigl{(} \phi(v_{\gamma, \delta}'(t)) \bigr{)}' + f(v_{\gamma, \delta}(t))= 0,
\\
\, v_{\gamma, \delta}(0)=\alpha, \quad v_{\gamma, \delta}'(0)=d_\alpha,
\end{cases}
\end{equation}
where $d_\alpha$ is such $\mathcal{E}(\alpha, d_\alpha)= \frac{1}{\delta}F(\gamma)$. 
In this way, $v_{\gamma, 0}$ will be the right shift of the function defined in \eqref{def-vgamma0} by the quantity $\zeta(\gamma) - \alpha$, whose limit for $\gamma \to 0^{+}$ coincides with the right shift of the function defined in \eqref{def-v00hat} by $1-\alpha$. On the other hand, $v_{0, \delta}$ will be the (increasing) heteroclinic connection between $0$ and $1$ such that $v_{0, \delta}(0)=\alpha$, which will then be approximated by means of the periodic solutions $v_{\gamma, \delta}$, having larger period the more $\gamma$ approaches $0$.  Finally, with the same proof as for Theorem~\ref{th-gamma-0-delta-0}, taking into account that $v_{0, \delta}$ is now everywhere increasing, one can show that $v_{0, \delta}$ converges locally uniformly to 
\begin{equation*}
\hat{v}_{0,0}(t) =
\begin{cases}
\, t+\alpha, &\text{if $t\in\mathopen{[}-\alpha,1-\alpha\mathclose{]}$,}
\\
\, 0, &\text{if $t\in\mathopen{]}-\infty,-\alpha\mathclose{]}$,}
\\
\, 1, &\text{if $t\in\mathopen{]}1-\alpha,+\infty\mathclose{]}$.}
\end{cases}
\end{equation*}
for $\delta \to 0^{+}$.
In Figure~\ref{fig-04}, we give a visual snapshot of these two convergences; the remaining two cases are similar to the ones depicted in Figure~\ref{fig-03} - $(\textsc{c})$ and Figure~\ref{fig-03} - $(\textsc{d})$, noticing that $\zeta(\gamma)\to1$ as $\gamma\to0^{+}$.
\hfill$\lhd$
\end{remark}

\begin{figure}[htb]
\centering
\begin{subfigure}[t]{0.45\textwidth}
\centering
\begin{tikzpicture}[scale=1]
\begin{axis}[
  scaled ticks=false,
  tick label style={font=\scriptsize},
  axis y line=middle, axis x line=bottom,
  xtick={-4,-3,-2,-1,0,1,2,3,4},
  ytick={0},
  xticklabels={ , , , ,$0$, $1$, , ,},
  yticklabels={$0$},
  xlabel={\small $t$},
  ylabel={\small $v(t)$},
every axis x label/.style={  at={(ticklabel* cs:1.0)},  anchor=west,
},
every axis y label/.style={  at={(ticklabel* cs:1.0)},  anchor=south,
},
  width=7cm,
  height=4cm,
  xmin=-5,
  xmax=5.2,
  ymin=0,
  ymax=1.1]
\addplot[thick,blue] graphics[xmin=-5,xmax=5,ymin=0,ymax=1] {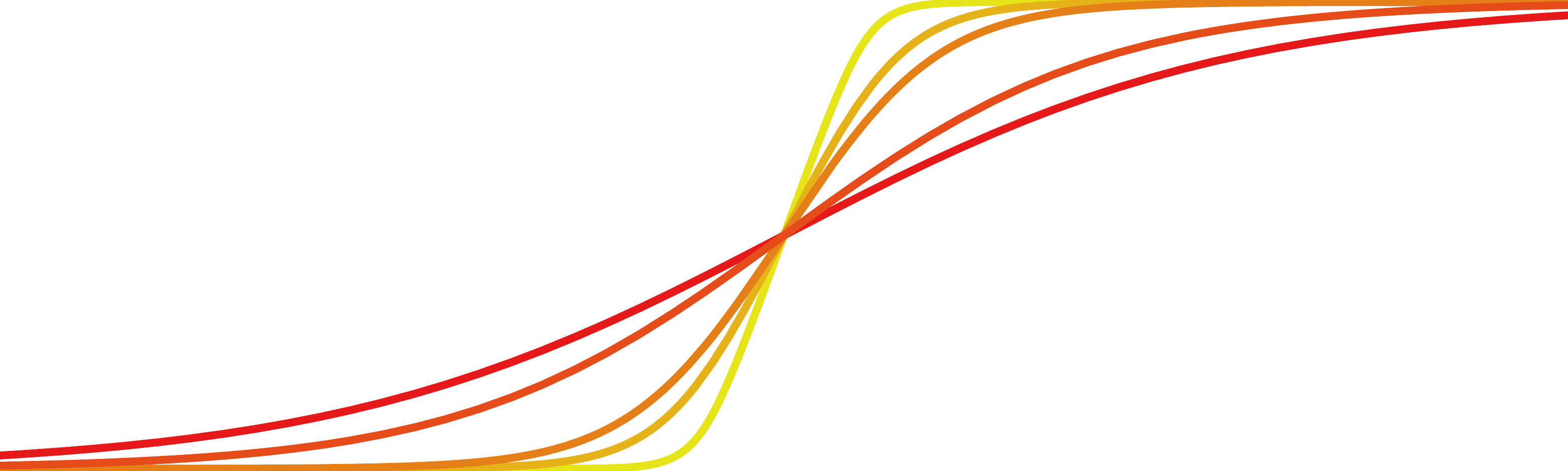};
\draw [color=gray, dashed, line width=0.3pt] (axis cs: -5,1)--(axis cs: 5,1);
\node at (axis cs: 0.4,0.46) {\scriptsize{$\alpha$}};
\node at (axis cs: -0.25,1) {\scriptsize{$1$}};
\end{axis}
\end{tikzpicture}
\caption{Graphs of the heteroclinic solutions $v_{0,\delta}$ of \eqref{eq-heter-gamma-delta} for $\delta\in\{1,0.5,0.1,$\break$0.05,0.01\}$ (coloured from red to yellow).} 
\end{subfigure}
\hspace*{\fill}
\begin{subfigure}[t]{0.45\textwidth}
\centering
\begin{tikzpicture}[scale=1]
\begin{axis}[
  scaled ticks=false,
  tick label style={font=\scriptsize},
  axis y line=middle, axis x line=bottom,
  xtick={-10,-5,0,5,10},
  ytick={0},
  xticklabels={ , , $0$, $5$, },
  yticklabels={$0$},
  xlabel={\small $t$},
  ylabel={\small $v(t)$},
every axis x label/.style={  at={(ticklabel* cs:1.0)},  anchor=west,
},
every axis y label/.style={  at={(ticklabel* cs:1.0)},  anchor=south,
},
  width=7cm,
  height=4cm,
  xmin=-15,
  xmax=15.5,
  ymin=0,
  ymax=1.1]
\addplot[thick,blue] graphics[xmin=-15,xmax=15,ymin=0,ymax=1] {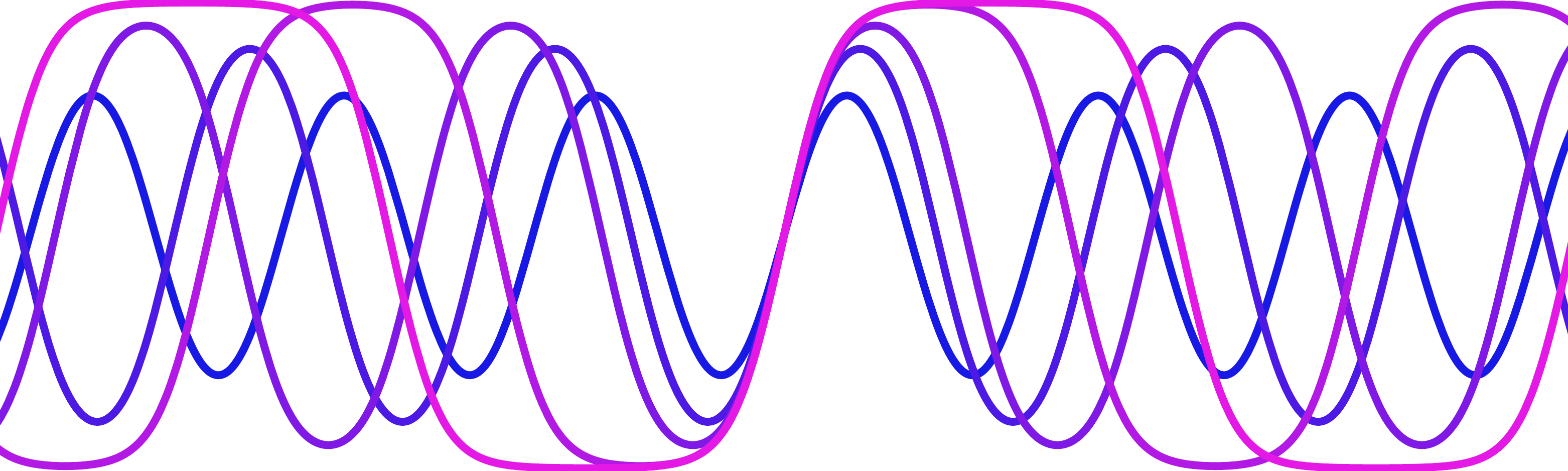};
\draw [color=gray, dashed, line width=0.3pt] (axis cs: -15,1)--(axis cs: 15,1);
\node at (axis cs: 1.1,0.46) {\scriptsize{$\alpha$}};
\node at (axis cs: -0.25,1) {\scriptsize{$1$}};
\end{axis}
\end{tikzpicture}
\caption{Graphs of the homoclinic solutions $v_{\gamma,\delta}$ of \eqref{eq-heter-gamma-delta} for $\delta=0.1$ and $\gamma\in\{0.2,0.1,$\break$0.05,0.005,0.0005\}$ (coloured from purple to magenta).}
\end{subfigure}
\captionof{figure}{Qualitative representation of the limit of the heteroclinic solutions of \eqref{eq-autonoma} as $\delta$ and $\gamma$ tend to zero, for $f(s)=s(1-s)(s-0.5)$ (and so $F(1)=0$).}
\label{fig-04}
\end{figure}

\section{A parametric problem with a non-constant positive weight}\label{section-3}

In this section, we deal with the non-autonomous differential equation
\begin{equation}\label{eq-sect-3}
\delta \bigl{(} \phi(v'(t)) \bigr{)}' + q(t) f(v(t))= 0,
\end{equation}
defined in $\mathbb{R}$, where $\delta>0$ and $q\in L^{\infty}(\mathbb{R})$ is a non-constant weight. We look for nontrivial homoclinic and heteroclinic solutions of \eqref{eq-sect-3}.

In more detail, in Section~\ref{section-3.1} we first provide the existence of solutions of \eqref{eq-sect-3} satisfying mixed Dirichlet-Neumann conditions at the boundary of a bounded interval. Next, in Section~\ref{section-3.2}, we determine the behavior of these solutions when one of the endpoints of the interval (the one with the Neumann condition) goes to $\pm\infty$. In Section~\ref{section-3.3}, we will exploit such a construction to find existence and non-existence results for homoclinic and heteroclinic solutions. At last, Section~\ref{section-3.4} is devoted to the investigation of the asymptotic behavior of these solutions as $\delta\to0^{+}$ and $\delta\to+\infty$.

\subsection{Boundary value problems in bounded intervals}\label{section-3.1}

Let $t_{0}\in\mathbb{R}$ and $T>0$.
We deal with the mixed Dirichlet-Neumann boundary value problems
\begin{equation}\label{mixed-1}
\begin{cases}
\, \delta \bigl{(} \phi(v') \bigr{)}' + q(t) f(v)= 0,
\\
\, v'(t_{0}-T) = 0, \quad v(t_{0})=\rho,
\end{cases}
\end{equation}
and 
\begin{equation}\label{mixed-2}
\begin{cases}
\, \delta \bigl{(} \phi(v') \bigr{)}' + q(t) f(v)= 0,
\\
\, v(t_{0}) = \rho, \quad v'(t_{0}+T)=0,
\end{cases}
\end{equation}
where $\rho\in\mathopen{]}0,1\mathclose{[}$. Notice that both \eqref{mixed-1} and \eqref{mixed-2} have mixed boundary conditions of Dirichlet-Neumann type.

Preliminarily, we show that every solution of \eqref{eq-sect-3}, with $(v(t_{0}-T),v'(t_{0}-T))=(\omega,0)$ and $\omega>0$ sufficiently small, remains (positive and) small and with positive derivative in $\mathopen{[}t_{0}-T,t_{0}\mathclose{]}$. 

\begin{lemma}\label{lem-gronwall-1}
Let $\delta>0$, $t_{0}\in\mathbb{R}$ and $T>0$.
Moreover, let $q\in L^{\infty}(t_{0}-T,t_{0})$ be a positive weight
and let $f\colon\mathopen{[}0, 1\mathclose{]} \to \mathbb{R}$ be a Lipschitz continuous function fulfilling \ref{hp-f-1}.
Then, for every $\gamma\in \mathopen{]}0,\alpha\mathclose{[}$ there exists $\omega_{\gamma}\in\mathopen{]}0,\alpha\mathclose{[}$ such that for every $\omega\in\mathopen{]}0,\omega_{\gamma}\mathclose{[}$ and for every solution $v$ of \eqref{eq-sect-3} with $(v(t_{0}-T),v'(t_{0}-T))=(\omega,0)$ it holds that
\begin{equation*}
(v(t),v'(t))\in\mathopen{]}0,\gamma\mathclose{[}\times\mathopen{]}0,+\infty\mathclose{[}, 
\quad \text{for every $t\in\mathopen{]}t_{0}-T,t_{0}\mathclose{]}$.}
\end{equation*}
\end{lemma}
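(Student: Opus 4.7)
The plan is to proceed by contradiction via the change of variable from $t$ to $v$ recorded in \eqref{eq-first}--\eqref{eq-y}, which is valid on any interval of strict monotonicity of the solution. The geometric intuition is the standard fact that the equilibrium $(0,0)$ of the associated planar system can only be reached in infinite time (because $f$ is Lipschitz at $0$); accordingly, for $\omega$ close to $0$ an orbit starting at $(\omega,0)$ should spend an arbitrarily long time before attaining any prescribed level $v=\gamma$, contradicting the fact that we only have a time interval of length $T$ at our disposal. Concretely, I would first observe that as long as $v$ remains in $\mathopen{]}0,\alpha\mathclose{[}$, integrating \eqref{eq-sect-3} from $t_{0}-T$ and using the Neumann condition gives
\[
\phi(v'(t)) = -\dfrac{1}{\delta}\int_{t_{0}-T}^{t} q(s)f(v(s))\,\mathrm{d}s > 0,
\]
since $q>0$ almost everywhere and $f<0$ on $\mathopen{]}0,\alpha\mathclose{[}$. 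Hence $v$ is strictly increasing there and cannot approach $0$; the only way to leave $\mathopen{]}0,\gamma\mathclose{[}$ is through the upper level $\gamma$, so I may assume, towards a contradiction, the existence of a first $\tau\in\mathopen{]}t_{0}-T,t_{0}\mathclose{]}$ with $v(\tau)=\gamma$.

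On $\mathopen{[}t_{0}-T,\tau\mathclose{]}$ the map $t\mapsto v(t)$ is a continuous increasing bijection onto $\mathopen{[}\omega,\gamma\mathclose{]}$, so the function $y(v)$ introduced in \eqref{eq-first} is well defined and satisfies \eqref{eq-y} with $y(\omega)=0$. Using the Lipschitz bound $|f(s)|\leq Ls$ together with $q\leq\|q\|_{\infty}$, a direct integration of \eqref{eq-y} yields
\[
0 \leq y(v) \leq \dfrac{L\|q\|_{\infty}}{\delta}\int_{\omega}^{v} s\,\mathrm{d}s \leq C v^{2}, \qquad v\in\mathopen{[}\omega,\gamma\mathclose{]},
\]
with $C:=L\|q\|_{\infty}/(2\delta)$. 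This quadratic control of $y$ at the rest point $v=\omega$ is the key quantitative ingredient.

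Inverting the relation $v'(t(v))=\sqrt{y(y+2)}/(y+1)$ from \eqref{eq-first} yields $t'(v)=(y+1)/\sqrt{y(y+2)}$, whence
\[
\tau-(t_{0}-T) \,=\, \int_{\omega}^{\gamma} \dfrac{y(s)+1}{\sqrt{y(s)(y(s)+2)}}\,\mathrm{d}s \,\geq\, \dfrac{1}{\sqrt{C(C\gamma^{2}+2)}}\int_{\omega}^{\gamma}\dfrac{\mathrm{d}s}{s} \,=\, \dfrac{\ln(\gamma/\omega)}{\sqrt{C(C\gamma^{2}+2)}},
\]
where the inequality follows from $y+1\geq 1$, $y(s)\leq Cs^{2}$ and $y(s)+2\leq C\gamma^{2}+2$. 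Setting $\omega_{\gamma}:=\gamma\exp\bigl(-T\sqrt{C(C\gamma^{2}+2)}\bigr)$ forces the right-hand side to exceed $T$ for every $\omega\in\mathopen{]}0,\omega_{\gamma}\mathclose{[}$, contradicting $\tau-(t_{0}-T)\leq T$ and thus proving the lemma. The main subtlety, and in fact the \emph{heart} of the argument, is managing the singular integrand $1/\sqrt{y(y+2)}$ at $v=\omega$: the quadratic bound $y\leq Cv^{2}$ converts the $\sim 1/\sqrt{y}$ singularity into the integrable, but logarithmically divergent (as $\omega\to 0^{+}$), $1/v$ kernel that drives the contradiction.
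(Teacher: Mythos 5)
Your argument is correct, but it follows a genuinely different route from the paper's. The paper works directly in the time variable: it rewrites \eqref{eq-sect-3} as $v''+\frac{1}{\delta}q(t)f(v)\bigl(1-(v')^{2}\bigr)^{3/2}=0$, integrates twice from $t_{0}-T$ (via Fubini), uses $|f(v)|\leq Lv$ and $|v'|<1$ to get
$v(t)\leq\omega+\frac{1}{\delta}\|q\|_{L^{\infty}}L\,(t-t_{0}+T)\int_{t_{0}-T}^{t}v(\xi)\,\mathrm{d}\xi$,
and concludes $v(t)\leq\omega\,e^{\frac{1}{\delta}\|q\|_{L^{\infty}}LT^{2}}<\gamma$ by Gr\"onwall, with $\omega_{\gamma}$ as in \eqref{def-omega-gamma}; the sign of $v'$ then follows exactly as in your first display. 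You instead pass to the first-order reduction \eqref{eq-first}--\eqref{eq-y}, establish the quadratic bound $y(v)\leq Cv^{2}$ at the rest point, and bound the time map from below by a logarithmically divergent integral, arguing by contradiction. Both proofs are quantitative and yield explicit exponentially small thresholds of the same flavour; the paper's Gr\"onwall route is slightly more elementary (no change of variables, no singular integrand), whereas yours stays within the phase-plane framework used throughout Section~\ref{section-3} and makes the dynamical mechanism --- the equilibrium is reached only in infinite time --- explicit. Two small points you should spell out: the identity $\tau-(t_{0}-T)=\int_{\omega}^{\gamma}\frac{y(s)+1}{\sqrt{y(s)(y(s)+2)}}\,\mathrm{d}s$ needs a word of justification because $t'(v)$ blows up at $v=\omega$ (integrate on $\mathopen{[}\omega+\varepsilon,\gamma\mathclose{]}$ and let $\varepsilon\to0^{+}$ by monotone convergence, using that the inverse $t(\cdot)$ is continuous up to $v=\omega$); and one should note that $y(s)>0$ for $s>\omega$ (since $q>0$ a.e.\ and $f<0$ on $\mathopen{]}0,\alpha\mathclose{[}$), so the integrand is finite a.e.\ and the lower bound is legitimate.
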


\begin{proof}
Let $\gamma\in \mathopen{]}0,\alpha\mathclose{[}$ be fixed; we show that the statement holds choosing $\omega_{\gamma}\in\mathopen{]}0,\alpha\mathclose{[}$ which satisfies 
\begin{equation}\label{def-omega-gamma}
\omega_{\gamma} < \gamma\, e^{-\frac{1}{\delta} \|q\|_{L^{\infty}(t_{0}-T,t_{0})} L T^{2}},
\end{equation}
where $L>0$ is the Lipschitz constant of $f$.
To this end, let $\omega\in\mathopen{]}0,\omega_{\gamma}\mathclose{[}$ and $v$ be a solution of \eqref{eq-sect-3} with $(v(t_{0}-T),v'(t_{0}-T))=(\omega,0)$.
First, we write equation \eqref{eq-sect-3} in the equivalent form
\begin{equation*}
v''(t) + \dfrac{1}{\delta} q(t) f(v(t)) \bigl{(} 1- (v'(t))^{2} \bigr{)}^{\!\frac{3}{2}} = 0
\end{equation*}
and then we integrate twice in $\mathopen{[}t_{0}-T,t\mathclose{]}$, thus obtaining
\begin{align*}
v(t) &= v(t_{0}-T) + v'(t_{0}-T) (t-t_{0}+T) 
\\
&\quad - \dfrac{1}{\delta} \int_{t_{0}-T}^{t} \int_{t_{0}-T}^{s}q(\xi) f(v(\xi)) \bigl{(} 1- (v'(\xi))^{2} \bigr{)}^{\!\frac{3}{2}}
\,\mathrm{d}\xi\,\mathrm{d}s
\\
&= \omega + 0 - \dfrac{1}{\delta} \int_{t_{0}-T}^{t}  q(\xi) f(v(\xi)) \bigl{(} 1- (v'(\xi))^{2} \bigr{)}^{\!\frac{3}{2}} (t-\xi)
\,\mathrm{d}\xi,
\end{align*}
for every $t\in\mathopen{[}t_{0}-T,t_{0}\mathclose{]}$, where the last equality follows by an application of Fubini's theorem.
Next, using the fact that $f$ is Lipschitz continuous and $\|v'\|_{\infty}<1$, we have that
\begin{equation*}
v(t) \leq \omega + \dfrac{1}{\delta} \|q\|_{L^{\infty}(t_{0}-T,t_{0})} L (t-t_{0}+T) \int_{t_{0}-T}^{t}  v(\xi)\,\mathrm{d}\xi,
\quad \text{for every $t\in\mathopen{[}t_{0}-T,t_{0}\mathclose{]}$.}
\end{equation*}
The Gr\"{o}nwall's inequality and \eqref{def-omega-gamma} imply that
\begin{equation*}
v(t) \leq \omega \, e^{\frac{1}{\delta}\|q\|_{L^{\infty}(t_{0}-T,t_{0})} L (t-t_{0}+T)^{2}} < \gamma, 
\quad \text{for every $t\in\mathopen{]}t_{0}-T,t_{0}\mathclose{]}$.}
\end{equation*}
Therefore, $f(v(t))<0$ for every $t\in\mathopen{]}t_{0}-T,t_{0}\mathclose{]}$ and thus
\begin{equation*}
v''(t) = -\dfrac{1}{\delta}q(t) f(v(t)) \bigl{(} 1- (v'(t))^{2} \bigr{)}^{\!\frac{3}{2}} >0, \quad \text{for almost every $t\in\mathopen{[}t_{0}-T,t_{0}\mathclose{]}$.}
\end{equation*}
As a consequence, we deduce
\begin{equation*}
v'(t) = v'(t_{0}-T)+\int_{t_{0}-T}^{t} v''(\xi)\,\mathrm{d}\xi >0, \quad \text{for every $t\in\mathopen{]}t_{0}-T,t_{0}\mathclose{]}$.}
\end{equation*}
The proof is complete.
\end{proof}

We can draw analogous considerations regarding the solutions ``starting near~$1$''. Precisely, with the sole change consisting in integrating on $[t_{0}, t_{0}+T]$ instead of $[t_{0}-T, t_{0}]$, it is possible to show that for every $\gamma \in \mathopen{]}\beta,1\mathclose{[}$ one can find $\omega_{\gamma} \in \mathopen{]}\beta, 1 \mathclose{[}$ such that if $\omega\in\mathopen{]}\omega_{\gamma}, 1\mathclose{[}$, then the solution $v$ of \eqref{eq-sect-3} satisfying $(v(t_{0}+T),v'(t_{0}+T))=(\omega,0)$ is positive and increasing for every $t \in [t_{0}, t_{0}+T]$, as a result of the fact that it is concave and arrives with zero derivative at the time instant $t_{0}+T$. It is then possible to state the following result.

\begin{lemma}\label{lem-gronwall-2}
Let $\delta > 0, t_{0}\in\mathbb{R}$ and $T>0$.
Moreover, let $q\in L^{\infty}(t_{0},t_{0}+T)$ be a positive weight and let $f \colon\mathopen{[}0, 1\mathclose{]} \to \mathbb{R}$ be a Lipschitz continuous function fulfilling \ref{hp-f-1}.
Then, for every $\gamma\in \mathopen{]}\beta,1\mathclose{[}$ there exists $\omega_{\gamma}\in\mathopen{]}\beta,1\mathclose{[}$ such that for every $\omega\in\mathopen{]}\omega_{\gamma},1\mathclose{[}$ and for every solution $v$ of \eqref{eq-sect-3} with $(v(t_{0}+T),v'(t_{0}+T))=(\omega,0)$ it holds that
\begin{equation*}
(v(t),v'(t))\in\mathopen{]}\gamma,1\mathclose{[}\times\mathopen{]}0, +\infty\mathclose{[}, 
\quad \text{for every $t\in\mathopen{[}t_{0},t_{0}+T\mathclose{[}$.}
\end{equation*}
\end{lemma}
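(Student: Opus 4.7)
My plan is to mirror the proof of Lemma~\ref{lem-gronwall-1} on the reflected interval, with the roles of $0$ and $1$ interchanged and the integration running backwards from the right endpoint $t_{0}+T$. Concretely, I would pick
\begin{equation*}
\omega_{\gamma}\in\mathopen{]}\beta,1\mathclose{[} \quad\text{such that}\quad 1-\omega_{\gamma} < (1-\gamma)\,e^{-\frac{1}{\delta}\|q\|_{L^{\infty}(t_{0},t_{0}+T)} L T^{2}},
\end{equation*}
and, given a solution $v$ with $(v(t_{0}+T),v'(t_{0}+T))=(\omega,0)$ for some $\omega\in\mathopen{]}\omega_{\gamma},1\mathclose{[}$, rewrite \eqref{eq-sect-3} as $v''=-\frac{1}{\delta}q(t)f(v)(1-(v')^{2})^{3/2}$.

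Integrating this twice from $t_{0}+T$ down to $t\in\mathopen{[}t_{0},t_{0}+T\mathclose{]}$ (and using Fubini exactly as in the proof of Lemma~\ref{lem-gronwall-1}) produces the representation
\begin{equation*}
v(t)=\omega-\dfrac{1}{\delta}\int_{t}^{t_{0}+T} q(\xi)\,f(v(\xi))\,\bigl{(}1-(v'(\xi))^{2}\bigr{)}^{\!\frac{3}{2}}(\xi-t)\,\mathrm{d}\xi.
\end{equation*}
Setting $u(t):=1-v(t)$ and exploiting the Lipschitz estimate $|f(v)|=|f(v)-f(1)|\leq L\,u$ together with $\|v'\|_{\infty}<1$, I obtain
\begin{equation*}
u(t)\leq (1-\omega)+\dfrac{1}{\delta}\|q\|_{L^{\infty}(t_{0},t_{0}+T)} L\,(t_{0}+T-t)\int_{t}^{t_{0}+T} u(\xi)\,\mathrm{d}\xi,
\end{equation*}
which is the mirror image of the bound in Lemma~\ref{lem-gronwall-1}. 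A backward Gr\"onwall inequality (or, equivalently, applying the standard Gr\"onwall lemma to the time-reversed function $s\mapsto u(t_{0}+T-s)$) then gives $u(t)\leq (1-\omega)\,e^{\frac{1}{\delta}\|q\|_{L^{\infty}}L(t_{0}+T-t)^{2}}<1-\gamma$, i.e.\ $v(t)>\gamma$ for all $t\in\mathopen{[}t_{0},t_{0}+T\mathclose{[}$.

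Having secured $v(t)>\gamma>\beta$ on $\mathopen{[}t_{0},t_{0}+T\mathclose{[}$, hypothesis \ref{hp-f-1} forces $f(v(t))>0$, so the rewritten equation yields $v''(t)<0$ a.e.\ on $\mathopen{[}t_{0},t_{0}+T\mathclose{]}$. Combined with $v'(t_{0}+T)=0$, this gives
\begin{equation*}
v'(t)=-\int_{t}^{t_{0}+T}v''(\xi)\,\mathrm{d}\xi>0,\quad\text{for every }t\in\mathopen{[}t_{0},t_{0}+T\mathclose{[},
\end{equation*}
completing the proof. The only mild obstacle compared with Lemma~\ref{lem-gronwall-1} is the direction of integration: one must make sure that the Gr\"onwall-type inequality is applied correctly when the "initial" datum sits at the right endpoint, which is cleanly handled either by the time-reversal $s\mapsto t_{0}+T-s$ or by quoting the backward version of Gr\"onwall directly.
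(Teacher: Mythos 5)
Your proof is correct and follows exactly the route the paper indicates for Lemma~\ref{lem-gronwall-2}, namely mirroring the Gr\"onwall argument of Lemma~\ref{lem-gronwall-1} with the integration carried out on $\mathopen{[}t_{0},t_{0}+T\mathclose{]}$ from the right endpoint, and then using concavity ($v''<0$ a.e.) together with $v'(t_{0}+T)=0$ to get strict monotonicity. The substitution $u=1-v$ with $|f(v)|\leq L(1-v)$ and the backward Gr\"onwall step are precisely the ``sole change'' the paper alludes to, so there is nothing to add.
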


Next, we prove the existence of a (positive) strictly increasing solution of the boundary value problem \eqref{mixed-1}.

\begin{theorem}\label{th-exist-mixed-1}
Let $\delta > 0$, $t_{0}\in\mathbb{R}$ and $T>0$. Moreover, let $q\in L^{\infty}(t_{0}-T,t_{0})$ be a positive weight and let $f\colon\mathopen{[}0, 1\mathclose{]} \to \mathbb{R}$ be a Lipschitz continuous function fulfilling \ref{hp-f-1}. 
Then, for every $\rho\in\mathopen{]}0,\alpha\mathclose{[}$, there exists a strictly increasing solution of problem \eqref{mixed-1}.
\end{theorem}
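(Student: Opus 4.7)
The plan is a shooting method on the left endpoint value. For $\omega \in \mathopen{]}0,\rho\mathclose{]}$, let $v_{\omega}$ denote the unique solution of the Cauchy problem
\begin{equation*}
\delta \bigl{(}\phi(v')\bigr{)}' + q(t) f(v) = 0, \qquad v(t_{0}-T) = \omega, \quad v'(t_{0}-T)=0,
\end{equation*}
defined globally on $\mathbb{R}$ after extending $f$ by $0$ outside $\mathopen{[}0,1\mathclose{]}$ (this uses the a priori bound $\vert v'\vert < 1$). Standard continuous dependence on initial data makes $\omega \mapsto v_{\omega}$ continuous in $\mathcal{C}^{1}(\mathopen{[}t_{0}-T,t_{0}\mathclose{]})$. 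I introduce
\begin{equation*}
S := \bigl\{ \omega \in \mathopen{]}0,\rho\mathclose{[} : v_{\omega}(t) < \rho \text{ for every } t \in \mathopen{[}t_{0}-T,t_{0}\mathclose{]} \bigr\},
\end{equation*}
and, provided $S\neq\emptyset$, set $\omega^{*}:=\sup S$. The candidate solution will be $v_{\omega^{*}}$.

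First, I would apply Lemma~\ref{lem-gronwall-1} with $\gamma = \rho \in \mathopen{]}0,\alpha\mathclose{[}$ to obtain $\omega_{\rho} > 0$ such that $\mathopen{]}0,\omega_{\rho}\mathclose{[} \subset S$; so $S$ is nonempty and $\omega^{*}\in \mathopen{]}0,\rho\mathclose{]}$. To show $\omega^{*} < \rho$, I would argue by contradiction: if $\omega^{*} = \rho$, choose $\omega_{n}\in S$ with $\omega_{n}\to\rho^{-}$, whence uniform convergence $v_{\omega_{n}}\to v_{\rho}$ on $\mathopen{[}t_{0}-T, t_{0}\mathclose{]}$ yields $v_{\rho}(t)\leq \rho$ there. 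However, $v_{\rho}(t_{0}-T)=\rho \in\mathopen{]}0,\alpha\mathclose{[}$ and continuity give $v_{\rho}(s)\in\mathopen{]}0,\alpha\mathclose{[}$ on a right neighborhood of $t_{0}-T$, so $f(v_{\rho}(s))<0$ and positivity of $q$ yield
\begin{equation*}
\phi(v_{\rho}'(t)) = -\dfrac{1}{\delta} \int_{t_{0}-T}^{t} q(s)\,f(v_{\rho}(s))\,\mathrm{d}s > 0
\end{equation*}
for such $t$. Therefore $v_{\rho}' > 0$, hence $v_{\rho}(t) > \rho$ in a right neighborhood of $t_{0}-T$, contradicting $v_{\rho}\leq \rho$.

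Finally, I would verify that $v_{\omega^{*}}$ solves \eqref{mixed-1} and is strictly increasing. Picking $\omega_{n}\in S$ with $\omega_{n}\nearrow\omega^{*}$, uniform convergence yields $v_{\omega^{*}}(t)\leq\rho<\alpha$ on $\mathopen{[}t_{0}-T,t_{0}\mathclose{]}$. Since $v_{\omega^{*}}(t_{0}-T)=\omega^{*}>0$ and $(\phi(v_{\omega^{*}}'))' = -qf(v_{\omega^{*}})/\delta > 0$ a.e.\ wherever $0<v_{\omega^{*}}<\alpha$, integrating from $t_{0}-T$ together with a standard bootstrap on the set $\{v_{\omega^{*}}'>0\}$ gives $v_{\omega^{*}}'>0$ on $\mathopen{]}t_{0}-T,t_{0}\mathclose{]}$, so $v_{\omega^{*}}$ is strictly increasing (and thus stays in $\mathopen{]}0,\rho\mathclose{]}$). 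To rule out $v_{\omega^{*}}(t_{0})<\rho$, observe that such a strict inequality would force $\max_{\mathopen{[}t_{0}-T,t_{0}\mathclose{]}} v_{\omega^{*}} < \rho$; by continuous dependence, this bound would persist for $v_{\omega}$ with $\omega$ in a right neighborhood of $\omega^{*}$, and the same monotonicity argument would give $\omega\in S$ with $\omega > \omega^{*}$, contradicting the supremum. Hence $v_{\omega^{*}}(t_{0})=\rho$, as desired.

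The main obstacle will be the closure step at $\omega^{*}$: one must propagate, to $\omega$ slightly above $\omega^{*}$, both the strict bound $v_{\omega}<\rho$ and the strict monotonicity of $v_{\omega}$. The key enabler is that $\rho<\alpha$ keeps the profile inside the region where $f$ has a strict sign, so the differential inequality $(\phi(v'))'>0$ is stable under small $\mathcal{C}^{1}$-perturbations, making both properties survive uniformly on the compact interval $\mathopen{[}t_{0}-T, t_{0}\mathclose{]}$.
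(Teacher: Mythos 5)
Your proof is correct and follows essentially the same strategy as the paper's: shooting in the initial value $\omega$ at the Neumann endpoint, taking the supremum $\omega^{*}$ of the set of parameters whose solutions stay below $\rho$ (this is the paper's $\tilde{\omega}_{\rho}$), and using continuous dependence at the extremal parameter to show that $v_{\omega^{*}}$ reaches exactly $\rho$ at $t_{0}$, with strict monotonicity coming from the sign of $f$ on $\mathopen{]}0,\alpha\mathclose{[}$ and Lemma~\ref{lem-gronwall-1} guaranteeing the set is nonempty. The only structural difference is that you extract existence and monotonicity simultaneously from the supremum characterization, whereas the paper first produces some solution of the boundary value problem via the Poincar\'e map and an intermediate-value argument (its Step~1) and only afterwards identifies the monotone one as the ``first intersection''; your version is marginally more economical but rests on the same ingredients.
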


\begin{proof}
The proof is based on a \textit{shooting technique} in the phase-plane $(v,w)=(v,\phi(v'))$; we divide it into two steps.

\smallskip

\noindent
\textit{Step~1. Existence.}
First, as mentioned in the Introduction, we extend the function $f$ continuously to the whole real line by setting $f(v)=0$ for $v\in\mathopen{]}-\infty,0\mathclose{[}\cup  \mathopen{]}1,+\infty\mathclose{[}$, still denoting such an extension by $f$.
Accordingly, we consider the planar system
\begin{equation}\label{syst-sect-3}
\begin{cases}
\, v' = \phi^{-1}(w),
\\
\, w' = - \dfrac{1}{\delta} q(t) f(v),
\end{cases}
\end{equation}
which is equivalent to the differential equation in \eqref{mixed-1}.
Since the function $f$ is Lipschitz continuous, the solutions of the associated Cauchy problems are globally defined on any compact time interval. Thus, fixed $[t_1, t_2] \subseteq \mathbb{R}$ with $t_1 < t_2$, we can introduce the associated Poincar\'{e} map $\mathcal{P}_{t_{1}}^{t_{2}}\colon \mathbb{R}^{2} \to \mathbb{R}^{2}$, which is the global diffeomorphism of the plane onto itself defined by
\begin{equation*}
\mathcal{P}_{t_{1}}^{t_{2}}(v_{1},w_{1}) := 
(v(t_{2};t_{1},v_{1},w_{1}),w(t_{2};t_{1},v_{1},w_{1}));
\end{equation*}
here, $(v(\cdot;t_{1},v_{1},w_{1}),w(\cdot;t_{1},v_{1},w_{1}))$ is the unique solution of \eqref{syst-sect-3} satisfying the initial condition $(v(t_{1}),w(t_{1}))=(v_{1},w_{1})$.
Our goal is to describe the deformation of the set $\mathopen{[}0,1\mathclose{]}\times\{0\}$ in the phase-plane $(v,w)$, through the Poincar\'{e} map $\mathcal{P}_{t_{0}-T}^{t_{0}}$. 

To this end, we first recall that $v\equiv 0$ and $v\equiv\alpha$ are trivial solution of \eqref{syst-sect-3}, so that
\begin{equation*}
\mathcal{P}_{t_{0}-T}^{t_{0}}(0,0) = (0,0), \quad \mathcal{P}_{t_{0}-T}^{t_{0}}(\alpha,0) = (\alpha,0);
\end{equation*}
therefore, by a continuity argument, for every $\rho\in\mathopen{]}0,\alpha\mathclose{[}$ we deduce that there exists $\omega_{\rho}\in\mathopen{]}0,\alpha\mathclose{[}$ such that
\begin{equation*}
\mathcal{P}_{t_{0}-T}^{t_{0}}(\omega_{\rho},0) \in \{\rho\}\times \mathbb{R}
\end{equation*}
(see Figure~\ref{fig-05} for a qualitative representation of the phase-plane).
We conclude that the first component of $(v(\cdot;t_{0}-T,\omega_{\rho},0),w(\cdot;t_{0}-T,\omega_{\rho},0))$ is a solution of the boundary value problem \eqref{mixed-1}.

\begin{figure}[htb]
\begin{tikzpicture}
\begin{axis}[
  scaled ticks=false,
  tick label style={font=\scriptsize},
  axis y line=left, axis x line=middle,
  xtick={0.45, 0.7, 1},
  ytick={0},
  xticklabels={$\rho$, $\alpha$, $1$},
  yticklabels={$0$},
  xlabel={\small $v$},
  ylabel={\small $\phi(v')$},
every axis x label/.style={  at={(ticklabel* cs:1.0)},  anchor=west,
},
every axis y label/.style={  at={(ticklabel* cs:1.0)},  anchor=south,
},
  width=6cm,
  height=5cm,
  xmin=0,
  xmax=1.1,
  ymin=-0.5,
  ymax=1]
\draw [color=gray, dashed, line width=0.3pt] (axis cs: 0.45,0)--(axis cs: 0.45, 1);
\addplot [color=magenta,line width=0.9pt,smooth] coordinates {(0., 0.) (0.5,0.7) (0.7, 0) (0.9, -0.3) (1, 0)};
\end{axis}
\end{tikzpicture}
\captionof{figure}{Representation of the deformation of the set $\mathopen{[}0,1\mathclose{]}\times\{0\}$ in the phase-plane $(v,w)=(v,\phi(v'))$, through the Poincar\'{e} map $\mathcal{P}_{t_{0}-T}^{t_{0}}$.}
\label{fig-05}
\end{figure}
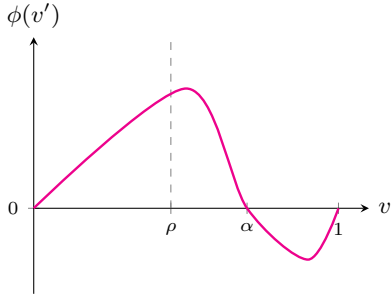

\smallskip

\noindent
\textit{Step~2. Monotonicity.}
Let $\rho\in\mathopen{]}0,\alpha\mathclose{[}$.
We aim to prove that the ``first intersection'' between the continuum $\mathcal{P}_{t_{0}-T}^{t_{0}}(\mathopen{[}0,\alpha\mathclose{]}\times\{0\})$ and $\{\rho\}\times \mathbb{R}$ corresponds to a strictly increasing solution of \eqref{mixed-1}.
Accordingly, let
\begin{equation*}
\hat{\omega}_{\rho} := \inf \bigl{\{} \omega\in\mathopen{]}0,\alpha\mathclose{[} \colon \mathcal{P}_{t_{0}-T}^{t_{0}}(\omega,0) \in \{\rho\}\times \mathbb{R} \bigr{\}}
\end{equation*}
and let $\hat{v}(\cdot)=\hat{v}(\cdot;\hat{\omega}_{\rho},0)$ be the solution of \eqref{eq-sect-3} with initial condition $(v(t_{0}-T),v'(t_{0}-T))=(\hat{\omega}_{\rho},0)$.
We claim that 
\begin{equation}\label{claim-monotonicity}
\hat{v}(t)\in\mathopen{[}0,\rho\mathclose{]}, \quad \text{for all $t\in\mathopen{]} t_{0}-T, t_{0} \mathclose{]}$.} 
\end{equation}
In order to prove it, let 
\begin{equation*}
\tilde{\omega}_{\rho} := \sup \bigl{\{} \omega\in\mathopen{]}0,\alpha\mathclose{[} \colon \mathcal{P}_{t_{0}-T}^{t}(\nu,0) \in\mathopen{[}0,\rho\mathclose{[}\times \mathbb{R}, \; \forall \, t\in \mathopen{[}t_{0}-T, t_{0} \mathclose{]}, \forall \, \nu\in \mathopen{[}0,\omega \mathclose{]}\bigr{\}}.
\end{equation*}
Notice that $\tilde{\omega}_{\rho}$ is well-defined, since, by an application of Lemma~\ref{lem-gronwall-1} (with $\gamma=\rho$), we deduce that there exists $\nu_{\rho}\in\mathopen{]}0,\alpha\mathclose{[}$ such that 
\begin{equation*}
\mathcal{P}_{t_{0}-T}^{t}(\mathopen{[}0,\nu_{\rho}\mathclose{]}\times\{0\}) \subseteq \mathopen{[}0,\rho\mathclose{[}\times \mathopen{]}0,+\infty\mathclose{[}, \quad \text{for all $t\in \mathopen{[}t_{0}-T, t_{0} \mathclose{]}.$}
\end{equation*}
If we prove that $\hat{\omega}_{\rho} = \tilde{\omega}_{\rho}$, then \eqref{claim-monotonicity} follows.
It is obvious that $\tilde{\omega}_{\rho} \leq \hat{\omega}_{\rho}$. Let us suppose, by contradiction, that $\tilde{\omega}_{\rho} < \hat{\omega}_{\rho}$. 
Then, since $\tilde{\omega}_{\rho}<\alpha$ (being $\mathcal{P}_{t_{0}-T}^{t}(\alpha,0)=(\alpha,0)$), due to the definition of $\tilde{\omega}_{\rho}$, for all $\nu \leq \tilde{\omega}_{\rho}$, the solution $v(\cdot; \nu, 0)$ is such that 
\begin{equation}\label{eq-stimaphi}
\phi(v'(t; \nu, 0)) = - \frac{1}{\delta} \int_{t_{0}-T}^{t} q(\xi) f(v(\xi; \nu, 0))\,\mathrm{d}\xi > 0, \quad \text{for every $t\in\mathopen{]} t_{0}-T, t_{0} \mathclose{]}$,}
\end{equation}
thanks to assumption \ref{hp-f-1}.
Hence, $v'(t; \nu, 0)>0$ for all $t\in \mathopen{[}t_{0}-T, t_{0} \mathclose{]}$, whence $\max_{\mathopen{[}t_{0}-T, t_{0} \mathclose{]}} v(t; \nu, 0) = v(t_{0}; \nu, 0)$. In particular,
\begin{equation}\label{eq-3-new}
v(t_{0}; \tilde{\omega}_{\rho}, 0)=\max_{t\in\mathopen{[}t_{0}-T, t_{0} \mathclose{]}} v(t; \tilde{\omega}_{\rho}, 0)= \rho;
\end{equation}
the last equality holds true since otherwise the continuous dependence with respect to the initial data would imply the existence of $\tilde{\omega}_{\rho}'>\tilde{\omega}_{\rho}$ such that $\mathcal{P}_{t_{0}-T}^{t}(\nu,0) \in\mathopen{[}0,\rho\mathclose{[}\times \mathbb{R}$, for every $t\in \mathopen{[}t_{0}-T, t_{0} \mathclose{]}$ and $\nu\in \mathopen{[}0,\tilde{\omega}_{\rho}' \mathclose{]}$ contradicting the definition of $\tilde{\omega}_{\rho}$. We observe that \eqref{eq-3-new} and the assumption $\tilde{\omega}_{\rho} < \hat{\omega}_{\rho}$ lead to a contradiction with the definition of $\hat{\omega}_{\rho}$.
 The claim \eqref{claim-monotonicity} is proved. From \eqref{claim-monotonicity} and \eqref{eq-stimaphi}, we then have that
$\hat{v}'(t)>0$ for every $t\in\mathopen{]} t_{0}-T, t_{0} \mathclose{]}$, implying that $\hat{v}$ is a strictly increasing solution of \eqref{mixed-1}. The thesis follows.
\end{proof}

In a similar manner, working with the Poincar\'e map $\mathcal{P}_{t_{0}+T}^{t_{0}}$, one can prove the following existence result for the boundary value problem \eqref{mixed-2}. We omit the proof, which is similar to the one for Theorem~\ref{th-exist-mixed-1}.

\begin{theorem}\label{th-exist-mixed-2}
Let $\delta > 0$, $t_{0}\in\mathbb{R}$ and $T>0$. Moreover, let $q\in L^{\infty}(t_{0},t_{0}+T)$ be a positive weight and let $f\colon\mathopen{[}0, 1\mathclose{]} \to \mathbb{R}$ be a Lipschitz continuous function fulfilling \ref{hp-f-1}. 
Then, for every $\rho\in\mathopen{]}\beta,1\mathclose{[}$, there exists a strictly increasing solution of problem \eqref{mixed-2}. 
\end{theorem}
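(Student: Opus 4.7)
The plan is to mirror the backward shooting argument used for Theorem~\ref{th-exist-mixed-1}, this time employing the Poincar\'e map $\mathcal{P}_{t_{0}+T}^{t_{0}}$ of the planar system \eqref{syst-sect-3} (with $f$ extended by zero outside $\mathopen{[}0,1\mathclose{]}$). First I would note that $v\equiv\beta$ and $v\equiv 1$ are trivial solutions of \eqref{eq-sect-3}, hence
\begin{equation*}
\mathcal{P}_{t_{0}+T}^{t_{0}}(\beta,0)=(\beta,0),\qquad \mathcal{P}_{t_{0}+T}^{t_{0}}(1,0)=(1,0).
\end{equation*}
Continuity of the Poincar\'e map and the intermediate value theorem applied to the first-component map $\omega\mapsto v(t_{0};t_{0}+T,\omega,0)$ then yield, for every $\rho\in\mathopen{]}\beta,1\mathclose{[}$, at least one $\omega_{\rho}\in\mathopen{]}\beta,1\mathclose{[}$ with $\mathcal{P}_{t_{0}+T}^{t_{0}}(\omega_{\rho},0)\in\{\rho\}\times\mathbb{R}$; the first component of the corresponding orbit of \eqref{syst-sect-3} is then a solution of \eqref{mixed-2}.

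To single out a strictly increasing solution I would take the \emph{first intersection from above}, namely
\begin{equation*}
\hat{\omega}_{\rho} := \sup\bigl\{\omega\in\mathopen{]}\beta,1\mathclose{[}\colon \mathcal{P}_{t_{0}+T}^{t_{0}}(\omega,0)\in\{\rho\}\times\mathbb{R}\bigr\},
\end{equation*}
and prove, by the same contradiction scheme used in Theorem~\ref{th-exist-mixed-1}, that the associated trajectory $\hat v$ satisfies $\hat v(t)\in\mathopen{[}\rho,1\mathclose{[}$ for every $t\in\mathopen{[}t_{0},t_{0}+T\mathclose{]}$. The role played there by Lemma~\ref{lem-gronwall-1} is now taken by Lemma~\ref{lem-gronwall-2}: applied with $\gamma=\rho$, it produces a neighbourhood of $1$ whose points, used as Cauchy data at $t_{0}+T$, give rise to solutions remaining in $\mathopen{]}\rho,1\mathclose{[}$ on the whole interval $\mathopen{[}t_{0},t_{0}+T\mathclose{]}$; this ensures both that the admissible set defining $\hat\omega_{\rho}$ is nonempty and that $\hat\omega_{\rho}<1$, and it provides the ``safe'' tube needed to run the contradiction argument via continuous dependence on initial data.

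Once $\hat v\geq\rho>\beta$ on $\mathopen{[}t_{0},t_{0}+T\mathclose{]}$, assumption \ref{hp-f-1} gives $f(\hat v)\geq 0$ on the whole interval, with strict inequality in a neighbourhood of $t_{0}+T$ (since $\hat v(t_{0}+T)=\hat\omega_{\rho}>\beta$), and integrating \eqref{eq-sect-3} backward from $t_{0}+T$ yields
\begin{equation*}
\phi(\hat v'(t)) \;=\; \frac{1}{\delta}\int_{t}^{t_{0}+T} q(\xi)\, f(\hat v(\xi))\,\mathrm{d}\xi \;>\;0,\qquad t\in\mathopen{[}t_{0},t_{0}+T\mathclose{[}.
\end{equation*}
Since $\phi^{-1}$ preserves sign, $\hat v'>0$ on $\mathopen{[}t_{0},t_{0}+T\mathclose{[}$, establishing strict monotonicity.

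The only genuine asymmetry with respect to the original proof, and therefore the point on which I would spend most care, is the change of orientation: the reservoir of trajectories that stay away from the ``wrong'' equilibrium now sits near $v=1$ instead of near $v=0$, so one must invoke Lemma~\ref{lem-gronwall-2} in place of Lemma~\ref{lem-gronwall-1}, take a supremum in place of an infimum, and integrate the second-order equation backward from the Neumann endpoint $t_{0}+T$. With these reversals in place, the contradiction on $\hat\omega_{\rho}$ and the final sign analysis of $\phi(\hat v')$ carry over essentially verbatim from Theorem~\ref{th-exist-mixed-1}.
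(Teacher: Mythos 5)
Your proposal is correct and follows exactly the route the paper indicates for this result: the paper omits the proof, stating only that it is obtained by repeating the argument of Theorem~\ref{th-exist-mixed-1} with the Poincar\'e map $\mathcal{P}_{t_{0}+T}^{t_{0}}$, and your mirrored construction (supremum in place of infimum, Lemma~\ref{lem-gronwall-2} in place of Lemma~\ref{lem-gronwall-1}, backward integration from the Neumann endpoint) is precisely that adaptation. All the sign reversals are handled correctly, so the argument is complete.
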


\subsection{Solutions in unbounded intervals: passing to the limit for $T\to+\infty$}\label{section-3.2}

Let $t_{0}\in\mathbb{R}$ and consider the differential problems
\begin{equation}\label{bvp-1}
\begin{cases}
\, \delta \bigl{(} \phi(v') \bigr{)}' + q(t) f(v)= 0,
\\
\, v(-\infty) = 0, \quad v(t_{0})=\rho,
\end{cases}
\end{equation}
and 
\begin{equation}\label{bvp-2}
\begin{cases}
\, \delta \bigl{(} \phi(v') \bigr{)}' + q(t) f(v)= 0,
\\
\, v(t_{0}) = \rho, \quad v(+\infty)=1,
\end{cases}
\end{equation}
where $\rho\in\mathopen{]}0,1\mathclose{[}$. 
We prove that the limits of the solutions of \eqref{mixed-1} and of \eqref{mixed-2} for $T\to+\infty$ solve, respectively, \eqref{bvp-1} and \eqref{bvp-2}.

\begin{theorem}\label{th-exist-bvp-1}
Let $\delta > 0$ and $t_{0}\in\mathbb{R}$. Moreover, let $q\in L^{\infty}(-\infty,t_{0})$ be a positive weight such that $\|q\|_{L^{1}(t,t_{0})}\to+\infty$ as $t\to-\infty$ and assume that $f \colon \mathopen{[}0, 1\mathclose{]} \to \mathbb{R}$ is a Lipschitz continuous function fulfilling \ref{hp-f-1}. 
Then, for every $\rho\in\mathopen{]}0,\alpha\mathclose{[}$, there exists a strictly increasing solution of \eqref{bvp-1}.
\end{theorem}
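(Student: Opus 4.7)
The plan is to obtain the desired solution as a locally uniform limit, as $T\to+\infty$, of the strictly increasing solutions $v_{T}$ of the mixed problem~\eqref{mixed-1} provided by Theorem~\ref{th-exist-mixed-1}; the assumption $\lVert q\rVert_{L^{1}(t,t_{0})}\to+\infty$ as $t\to-\infty$ is then crucially used to force the correct asymptotic behavior at $-\infty$.

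For every $T>0$, Theorem~\ref{th-exist-mixed-1} yields a strictly increasing solution $v_{T}\colon\mathopen{[}t_{0}-T,t_{0}\mathclose{]}\to\mathopen{[}0,\rho\mathclose{]}$ with $v_{T}'(t_{0}-T)=0$ and $v_{T}(t_{0})=\rho$. For any compact interval $K\subseteq\mathopen{]}-\infty,t_{0}\mathclose{]}$ and every $T$ large enough that $K\subseteq\mathopen{[}t_{0}-T,t_{0}\mathclose{]}$, the bounds $\lvert v_{T}'\rvert<1$ and $\lvert(\phi(v_{T}'))'\rvert\leq\lVert q\rVert_{\infty}\max\lvert f\rvert/\delta$ make both $\{v_{T}\}$ and $\{\phi(v_{T}')\}$ equi-Lipschitz on $K$. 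Via Ascoli--Arzel\`a together with a standard diagonal procedure over a nested exhaustion of $\mathopen{]}-\infty,t_{0}\mathclose{]}$, I extract a sequence $T_{n}\to+\infty$ such that $v_{T_{n}}\to v$ and, by the continuity of $\phi^{-1}$, $v_{T_{n}}'\to v'$ locally uniformly on $\mathopen{]}-\infty,t_{0}\mathclose{]}$. The limit $v$ is non-decreasing, takes values in $\mathopen{[}0,\rho\mathclose{]}$, fulfills $v(t_{0})=\rho$, and satisfies the differential equation (in the absolutely continuous sense) by dominated convergence in its integrated form.

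To upgrade monotonicity to strict monotonicity, suppose $v'(t^{*})=0$ for some $t^{*}\leq t_{0}$. If $v(t^{*})\in\mathopen{]}0,\rho\mathclose{]}$, then $f(v(t^{*}))<0$ and the positivity of $q$ imply $(\phi(v'))'>0$ in a neighborhood of $t^{*}$, so $\phi(v')<0$ immediately to the left of $t^{*}$, contradicting the non-decreasingness of $v$. If instead $v(t^{*})=0$, then by monotonicity $v\equiv 0$ on $\mathopen{]}-\infty,t^{*}\mathclose{]}$, so $v'(t^{*})=0$; uniqueness for the equivalent Carath\'eodory Cauchy problem in the $(v,w)=(v,\phi(v'))$-variables (whose right-hand side is Lipschitz in $(v,w)$) with datum $(0,0)$ at $t^{*}$ would force $v\equiv 0$, against $v(t_{0})=\rho>0$. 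Hence $v$ is strictly increasing.

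The main obstacle is to show $v(-\infty)=0$, and this is exactly where the hypothesis on $\lVert q\rVert_{L^{1}}$ enters. Monotonicity ensures that $L:=\lim_{t\to-\infty}v(t)\in\mathopen{[}0,\rho\mathclose{]}$ exists, and I argue $L=0$ by contradiction. If $L>0$, then $v(t)\in\mathopen{[}L,\rho\mathclose{]}\subset\mathopen{]}0,\alpha\mathclose{[}$ for every $t\leq t_{0}$, hence $m:=\min_{\mathopen{[}L,\rho\mathclose{]}}(-f)>0$. Integrating the ODE over $\mathopen{[}t,t_{0}\mathclose{]}$ and using $\phi(v'(t))\geq 0$,
\begin{equation*}
\delta\phi(v'(t_{0}))\geq\int_{t}^{t_{0}}q(\xi)\bigl(-f(v(\xi))\bigr)\,\mathrm{d}\xi\geq m\lVert q\rVert_{L^{1}(t,t_{0})}\to+\infty\quad\text{as }t\to-\infty,
\end{equation*}
contradicting the finiteness of $\phi(v'(t_{0}))$. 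Thus $L=0$, and $v$ is the sought solution of~\eqref{bvp-1}.
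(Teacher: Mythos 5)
Your overall strategy coincides with the paper's: obtain the solution as a $T\to+\infty$ limit of the strictly increasing solutions of the mixed problems \eqref{mixed-1}, and use the divergence of $\|q\|_{L^{1}(t,t_{0})}$ to exclude a positive limit at $-\infty$ (your final contradiction is essentially identical to the paper's, and your strict-monotonicity upgrade is a correct, slightly more explicit version of a point the paper leaves implicit). There is, however, a gap in the compactness step. You invoke Ascoli--Arzel\`a for the family $\{\phi(v_{T}')\}$ on a compact $K\subseteq\mathopen{]}-\infty,t_{0}\mathclose{]}$ on the strength of equi-Lipschitz continuity alone, but Ascoli--Arzel\`a also requires a $T$-uniform pointwise bound, and nothing you wrote provides one: the only point where $\phi(v_{T}')$ is known a priori is $t_{0}-T$ (where it vanishes), which escapes to $-\infty$, and integrating $|(\phi(v_{T}'))'|\leq\|q\|_{\infty}\max|f|/\delta$ from $t_{0}-T$ into $K$ yields a bound growing like $T$. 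The omission is not merely technical: even if you instead apply Ascoli--Arzel\`a directly to $\{v_{T}'\}$ (equi-Lipschitz and bounded by $1$, since $v_{T}''$ is bounded), the limit could a priori attain $v'=1$, in which case $\phi(v')$ is undefined and the limit is not a solution of the equation.

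The paper supplies exactly the missing estimate through the first-order reduction \eqref{eq-intermedia1}: setting $y(v)=1/\sqrt{1-(v_{T}'(t(v)))^{2}}-1$ on the monotonicity interval and integrating $\dot{y}(v)=-q(t(v))f(v)/\delta$ with respect to $v$ (not $t$) gives $|y(v)|\leq \alpha M/\delta$ with $M=\|q\|_{L^{\infty}(-\infty,t_{0})}\max_{\mathopen{[}0,\alpha\mathclose{]}}|f|$, uniformly in $T$; equivalently, one can multiply the equation by $v_{T}'$ and integrate in $t$. This yields $v_{T}'\leq K<1$ with $K$ independent of $T$, hence a uniform bound on $\phi(v_{T}')$, after which your diagonal extraction, the passage to the limit in the equation, and the rest of your argument go through.
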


\begin{proof}
Let $\rho\in\mathopen{]}0,\alpha\mathclose{[}$ and $T>0$. 
Theorem~\ref{th-exist-mixed-1} ensures the existence of a strictly increasing solution $v_{T}$ of \eqref{mixed-1} in the bounded interval $\mathopen{[}t_{0}-T,t_{0}\mathclose{]}$; the goal is now to 
pass to the limit for $T\to+\infty$. To this end, we first prove that $v_{T}$, $v_{T}'$, $v_{T}''$ are uniformly bounded in $\mathopen{[}t_{0}-T,t_{0}\mathclose{]}$. Indeed, we observe that
\begin{equation*}
v_{T}(t)\in\mathopen{]}0,\alpha\mathclose{[}, \quad \text{for all $t\in \mathopen{[}t_{0}-T,t_{0}\mathclose{]}$.}
\end{equation*}
Moreover, we have
\begin{equation*}
0\leq v_{T}''(t) = -q(t) f(v_{T}(t)) \bigl{(} 1- (v_{T}'(t))^{2} \bigr{)}^{\frac{3}{2}} \leq \|q\|_{L^{\infty}(-\infty,t_{0})} \max_{\mathopen{[}0,\alpha\mathclose{]}}|f| =:M,
\end{equation*}
for almost every $t\in \mathopen{[}t_{0}-T,t_{0}\mathclose{]}$.
Recalling the discussion in the Introduction and in Section~\ref{section-2}, we then consider the first-order reduction associated with 
\eqref{eq-sect-3}, reading as 
\begin{equation}\label{eq-intermedia1}
\dot{y}(v) = -q(t(v)) \frac{f(v)}{\delta}.
\end{equation}
Integrating such an equality 
between $v_{T}(t_{0}-T)$ (where $y(v_{T}(t_{0}-T))=0$) and $v\in\mathopen{]}v_{T}(t_{0}-T),\rho\mathclose{[}$, we deduce that
\begin{equation*}
y(v)= - \int_{v_{T}(t_{0}-T)}^{v} q(t(s)) \frac{f(s)}{\delta} \,\mathrm{d}s,
\end{equation*}
and thus
\begin{equation*}
|y(v)| \leq \frac{\alpha M}{\delta}, \quad \text{for all $v\in\mathopen{]}v_{T}(t_{0}-T),\rho\mathclose{[}$.}
\end{equation*}
Then, recalling the latter equality in \eqref{eq-first}, we deduce that there exists $K>0$ such that
\begin{equation}\label{bound-deriv}
v_{T}'(t)\in\mathopen{[}0,K\mathclose{]}, \quad \text{for all $t\in \mathopen{[}t_{0}-T,t_{0}\mathclose{]}$.}
\end{equation}
Consequently, there exists a continuously differentiable function $v_{\infty}$ such that $v_{T} \to v_{\infty}$ and $v_{T}' \to v_{\infty}'$ for $T\to +\infty$, with uniform convergence. Using \eqref{eq-sect-3}, also $v_{T}'' \to v_{\infty}''$ almost everywhere for $T \to +\infty$. We deduce that $v_{\infty}$ is a strictly increasing solution of equation \eqref{eq-sect-3} on the interval $\mathopen{]}-\infty, 0\mathclose{[}$. It remains to prove that $v_{\infty}(t_{0})=\rho$ and $v_{\infty}(-\infty)=0$, thus proving that $v_{\infty}$ solves \eqref{bvp-1}.

The former claim immediately follows from the fact that $v_{T}(t_{0})=\rho$ for all $T>0$. In order to prove that $v_{\infty}(-\infty)=0$, we observe that $v_{T}(t_{0}-T)\in\mathopen{]}0,\rho\mathclose{[}$ for all $T>0$; hence, passing to a subsequence if necessary, there exists $\ell\in\mathopen{[}0,\rho\mathclose{]}$ for which $\lim_{k\to-\infty} v_{\infty}(k) = \ell$. By contradiction, assume that $\ell > 0$; then, 
since $0<\ell\leq v_{\infty}(t)\leq \rho < \alpha$ and $v_{\infty}'(t)> 0$ for every $t \in\mathopen{]}-\infty,t_{0}\mathclose{]}$, recalling \eqref{bound-deriv} we have
\begin{align*}
\phi(K) \geq \phi(v_{\infty}'(t_{0})) \geq \phi(v_{\infty}'(t_{0})) - \phi(v_{\infty}'(t)) 
&= - \int_{t}^{t_{0}} q(\xi) \frac{f(v_{\infty}(\xi))}{\delta}\,\mathrm{d}\xi
\\
&\geq \|q\|_{L^{1}(t,t_{0})} \frac{\min_{\mathopen{[}\ell,\rho\mathclose{]}} (-f)}{\delta} >0,
\end{align*}
for every $t\in\mathopen{]}-\infty,t_{0}\mathclose{]}$.
The fact that $\|q\|_{L^{1}(t,t_{0})}\to+\infty$ for $t \to -\infty$ then yields a contradiction. Hence, $\ell =0$ and 
the proof is complete.
\end{proof}

Proceeding in an analogous way, integrating in particular \eqref{eq-intermedia1} between $v_T(t_{0}+T)$ and $v \in \mathopen{]}\rho, v_T(t_{0}+T)\mathclose{[}$, where $\rho \in \mathopen{]} \beta, 1 \mathclose{[}$ is fixed, one can prove the following.

\begin{theorem}\label{th-exist-bvp-2}
Let $\delta > 0$ and $t_{0}\in\mathbb{R}$. Moreover, let $q\in L^{\infty}(t_{0},+\infty)$ be a positive weight such that $\|q\|_{L^{1}(t_{0},t)}\to+\infty$ as $t\to+\infty$ and assume that $f\colon\mathopen{[}0, 1\mathclose{]} \to \mathbb{R}$ is a Lipschitz continuous function fulfilling \ref{hp-f-1}. 
Then, for every $\rho\in\mathopen{]}\beta,1\mathclose{[}$, there exists a strictly increasing solution of \eqref{bvp-2}.
\end{theorem}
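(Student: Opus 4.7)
The plan is to mirror the argument for Theorem~\ref{th-exist-bvp-1}, with time running forward from $t_{0}$ instead of backward. Fix $\rho \in \mathopen{]}\beta, 1\mathclose{[}$ and, for every $T > 0$, apply Theorem~\ref{th-exist-mixed-2} to produce a strictly increasing solution $v_{T}$ of \eqref{mixed-2} on $\mathopen{[}t_{0}, t_{0}+T\mathclose{]}$. Because $v_{T}$ is increasing with $v_{T}(t_{0}) = \rho > \beta$, we have $v_{T}(t) \in \mathopen{[}\rho, 1\mathclose{[}$ and hence $f(v_{T}(t)) \geq 0$ on the whole interval, which yields the uniform bound $|v_{T}''(t)| \leq \|q\|_{\infty} \max_{\mathopen{[}0,1\mathclose{]}} |f|/\delta$ from the equation.

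Next I would establish a uniform bound of the form $0 \leq v_{T}'(t) \leq K$ with $K < 1$, arguing through the first-order reduction \eqref{eq-intermedia1}. Since $v_{T}$ is strictly increasing, its inverse $t(v)$ is well defined on $\mathopen{[}\rho, v_{T}(t_{0}+T)\mathclose{]}$ and, integrating \eqref{eq-intermedia1} from $v$ to $v_{T}(t_{0}+T)$ (where $y$ vanishes by construction), one obtains
\begin{equation*}
y(v) = \int_{v}^{v_{T}(t_{0}+T)} q(t(s)) \frac{f(s)}{\delta} \,\mathrm{d}s,
\end{equation*}
whose absolute value is bounded independently of $T$ by $(1-\rho)\|q\|_{\infty} \max_{\mathopen{[}0,1\mathclose{]}} |f|/\delta$. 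Plugging this bound into the right-hand formula in \eqref{eq-first} produces the desired $K \in \mathopen{]}0, 1\mathclose{[}$.

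Having these uniform estimates, Ascoli--Arzelà applied on compact subintervals of $\mathopen{[}t_{0},+\infty\mathclose{[}$ gives a subsequence along which $v_{T} \to v_{\infty}$ and $v_{T}' \to v_{\infty}'$ uniformly on compacts; using \eqref{eq-sect-3} one also gets $v_{T}'' \to v_{\infty}''$ almost everywhere. Therefore $v_{\infty}$ is a continuously differentiable non-decreasing solution of \eqref{eq-sect-3} on $\mathopen{[}t_{0}, +\infty\mathclose{[}$ with $v_{\infty}(t_{0}) = \rho$, and it remains to prove $v_{\infty}(+\infty) = 1$ and the strict monotonicity.

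The main obstacle is the asymptotic condition at $+\infty$, and this is exactly where the hypothesis $\|q\|_{L^{1}(t_{0}, t)} \to +\infty$ intervenes. Since $v_{\infty}$ is non-decreasing and bounded by $1$, the limit $\ell := v_{\infty}(+\infty) \in \mathopen{[}\rho, 1\mathclose{]}$ exists; I would assume by contradiction $\ell < 1$, so $v_{\infty}(t) \in \mathopen{[}\rho,\ell\mathclose{]} \subset \mathopen{]}\beta, 1\mathclose{[}$ for every $t \geq t_{0}$ and thus $m := \min_{\mathopen{[}\rho,\ell\mathclose{]}} f > 0$. Integrating \eqref{eq-sect-3} on $\mathopen{[}t_{0}, t\mathclose{]}$ would then give
\begin{equation*}
\phi(v_{\infty}'(t)) - \phi(v_{\infty}'(t_{0})) = -\frac{1}{\delta}\int_{t_{0}}^{t} q(\xi) f(v_{\infty}(\xi)) \,\mathrm{d}\xi \leq -\frac{m}{\delta}\|q\|_{L^{1}(t_{0}, t)} \to -\infty,
\end{equation*}
contradicting the fact that $\phi(v_{\infty}'(t)) \geq 0$ (because $v_{\infty}' \geq 0$). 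Hence $\ell = 1$. To upgrade non-decreasing to strictly increasing, I would note that if $v_{\infty}'(t^{*}) = 0$ for some $t^{*} \geq t_{0}$, then $v_{\infty}(t^{*}) \geq \rho > \beta$ yields $f(v_{\infty}(t^{*})) > 0$, so $\phi(v_{\infty}')$ would strictly decrease through $0$ at $t^{*}$, giving $v_{\infty}'(t) < 0$ for $t$ slightly larger than $t^{*}$, a contradiction.
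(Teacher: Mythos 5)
Your proposal is correct and follows exactly the route the paper intends: it mirrors the proof of Theorem~\ref{th-exist-bvp-1} step by step, including the integration of \eqref{eq-intermedia1} between $v$ and $v_T(t_0+T)$ (where $y$ vanishes) to get the uniform gradient bound, and the contradiction via $\|q\|_{L^{1}(t_{0},t)}\to+\infty$ to force $v_\infty(+\infty)=1$. The added argument for strict monotonicity of the limit is a small bonus the paper leaves implicit, not a deviation.
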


\subsection{Heteroclinic and homoclinic solutions}\label{section-3.3}

We now exploit the results of the previous sections to construct heteroclinic and homoclinic solutions. We deal with a bistable reaction term, providing the existence of a strictly increasing heteroclinic solution $v$ of \eqref{eq-sect-3} with $v(t_{0}) \in\mathopen{]}0,\alpha\mathclose{[}$. Some remarks for the more general case $\alpha<\beta$ are given as well.

\begin{theorem}\label{th-main}
Let $\delta > 0$ and let $q\in L^{\infty}(-\infty,t_{0})$ for some $t_{0} \in \mathbb{R}$. Assume that $q$ satisfies the following two assumptions:
\begin{itemize}
\item there exists $\eta>0$ for which $q(t)\geq\eta$ for almost every $t\in\mathopen{]}-\infty,t_{0}\mathclose{[}$;
\item there exists $c > 0$ for which $q\equiv c$ in $\mathopen{[}t_{0},+\infty\mathclose{[}$.
\end{itemize} 
Moreover, let $f$ be a Lipschitz continuous function satisfying \ref{hp-f-1} and \ref{hp-f-2} and assume that $f$ is bistable, that is, $\alpha=\beta$. Then, if 
\begin{equation}\label{cond-c}
c \leq \eta \dfrac{-F(\alpha)}{F(1)-F(\alpha)},
\end{equation}
there exists a strictly increasing heteroclinic solution of \eqref{eq-sect-3}. 
\end{theorem}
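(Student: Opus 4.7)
The plan is to construct the heteroclinic by matching the left-hand solution provided by Theorem~\ref{th-exist-bvp-1} with the right-hand branch of the separatrix approaching $(1,0)$ for the autonomous problem on $\mathopen{[}t_{0},+\infty\mathclose{[}$. For every $\rho\in\mathopen{]}0,\alpha\mathclose{[}$, the hypothesis $q\geq\eta>0$ on $\mathopen{]}-\infty,t_{0}\mathclose{[}$ gives $\|q\|_{L^{1}(t,t_{0})}\to+\infty$ as $t\to-\infty$, so Theorem~\ref{th-exist-bvp-1} yields a strictly increasing $v_\rho$ on $\mathopen{]}-\infty,t_{0}\mathclose{]}$ with $v_\rho(-\infty)=0$ and $v_\rho(t_{0})=\rho$. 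On $\mathopen{[}t_{0},+\infty\mathclose{[}$ the equation is autonomous, so the energy
\begin{equation*}
\mathcal{E}(v,\phi(v'))=\sqrt{1+\phi(v')^{2}}-1+\dfrac{c}{\delta}F(v)
\end{equation*}
is conserved; the candidate right-hand piece lies on the level set $\mathcal{E}=(c/\delta)F(1)=\mathcal{E}(1,0)$, whose upper half carries a unique forward orbit reaching $(1,0)$ as $t\to+\infty$ (in infinite time, by Lipschitz continuity of $f$, as in Section~\ref{section-2}). With $y_\rho(v):=(1-v_\rho'(t_\rho(v))^{2})^{-1/2}-1$ on $\mathopen{[}0,\rho\mathclose{]}$ and $t_\rho$ the inverse of $v_\rho$, the matching at $t_{0}$ reads
\begin{equation*}
\mathcal{G}(\rho):=y_\rho(\rho)-\dfrac{c}{\delta}\bigl(F(1)-F(\rho)\bigr)=0.
\end{equation*}

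First I would establish two-sided bounds on $y_\rho(\rho)$. Integrating \eqref{eq-y} with $y_\rho(0^{+})=0$ (which follows from $v_\rho(-\infty)=0$ and monotonicity forcing $v_\rho'(-\infty)=0$) and using $f<0$ on $\mathopen{]}0,\alpha\mathclose{[}$ together with $\eta\leq q\leq\|q\|_{L^{\infty}(-\infty,t_{0})}$, I obtain
\begin{equation*}
-\dfrac{\eta}{\delta}F(\rho)\leq y_\rho(\rho)\leq -\dfrac{\|q\|_{L^{\infty}(-\infty,t_{0})}}{\delta}F(\rho),\quad\rho\in\mathopen{]}0,\alpha\mathclose{[}.
\end{equation*}
The upper bound gives $\mathcal{G}(\rho)\to-(c/\delta)F(1)<0$ as $\rho\to 0^{+}$, while the lower bound yields
\begin{equation*}
\liminf_{\rho\to\alpha^{-}}\mathcal{G}(\rho)\geq\dfrac{-\eta F(\alpha)-c\bigl(F(1)-F(\alpha)\bigr)}{\delta},
\end{equation*}
which is nonnegative precisely under assumption \eqref{cond-c}; this is how the threshold $\eta(-F(\alpha))/(F(1)-F(\alpha))$ surfaces, in full analogy with Proposition~\ref{prop-intro}.

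The step I expect to be the main obstacle is converting the above into an actual intermediate-value argument, since Theorem~\ref{th-exist-bvp-1} does not guarantee uniqueness of $v_\rho$. I would rely on the uniform bounds $0\leq v_\rho\leq\rho<\alpha$, $0\leq v_\rho'<1$, and $|v_\rho''|\leq(\|q\|_{L^{\infty}}/\delta)\max_{\mathopen{[}0,\alpha\mathclose{]}}|f|$ to make $\{v_\rho\}$ relatively compact in $\mathcal{C}^{1}_{\mathrm{loc}}(\mathopen{]}-\infty,t_{0}\mathclose{]})$ via Ascoli--Arzel\`a; any cluster point $v^{*}$ as $\rho\to\rho^{*}$ is a strictly increasing solution of \eqref{bvp-1} at level $\rho^{*}$, the condition $v^{*}(-\infty)=0$ being recovered by exactly the $\|q\|_{L^{1}(t,t_{0})}\to+\infty$ argument from the proof of Theorem~\ref{th-exist-bvp-1}. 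Selecting $v_\rho$ through the ``first intersection'' minimality rule used in Theorem~\ref{th-exist-mixed-1} and transferring it through the $T\to+\infty$ limit provides enough one-sided semicontinuity of $\mathcal{G}$ to locate $\rho^{*}:=\inf\{\rho\in\mathopen{]}0,\alpha\mathclose{[}\colon\mathcal{G}(\rho)\geq 0\}$ with $\mathcal{G}(\rho^{*})=0$; the boundary case of equality in \eqref{cond-c} is handled by a further limiting argument as $\rho\to\alpha^{-}$.

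Finally, I would glue $v_{\rho^{*}}$ on $\mathopen{]}-\infty,t_{0}\mathclose{]}$ with the unique autonomous solution $u$ on $\mathopen{[}t_{0},+\infty\mathclose{[}$ starting at $(\rho^{*},v_{\rho^{*}}'(t_{0}))$. The matching condition places the orbit of $u$ on the separatrix $\mathcal{E}=(c/\delta)F(1)$; using that $w'=-cf(v)/\delta$ increases $w$ on $\mathopen{]}\rho^{*},\alpha\mathclose{[}$ and decreases it on $\mathopen{]}\alpha,1\mathclose{[}$, while the equilibrium $(1,0)$ is reached only as $t\to+\infty$ by Lipschitz continuity, a direct phase-plane inspection shows that $u$ is strictly increasing on $\mathopen{[}t_{0},+\infty\mathclose{[}$ with $u(+\infty)=1$. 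The glued function is then $\mathcal{C}^{1}$ with absolutely continuous derivative, strictly increasing on $\mathbb{R}$, and satisfies \eqref{eq-sect-3} almost everywhere, producing the desired heteroclinic.
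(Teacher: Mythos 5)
Your proposal follows essentially the same route as the paper's proof: the left-hand branch supplied by Theorem~\ref{th-exist-bvp-1} with the two-sided estimate \eqref{estimate-y} on $y_{\rho}(\rho)$ coming from $\eta\leq q\leq\|q\|_{L^{\infty}(-\infty,t_{0})}$, the autonomous level line $\mathcal{E}=\mathcal{E}(1,0)$ on $\mathopen{[}t_{0},+\infty\mathclose{[}$, and an intersection of the two curves forced by \eqref{cond-c} through the monotonicity of $-F(v)/(F(1)-F(v))$ on $\mathopen{[}0,\alpha\mathclose{]}$. If anything you are slightly more scrupulous than the paper, which passes silently over both the continuity (or connectedness) of $\rho\mapsto\kappa(\rho)$ needed to turn the sign change into an actual intersection and the final gluing and monotonicity check on the autonomous half-line.
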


\begin{proof}
First, let us focus our attention on the interval $\mathopen{]}-\infty,t_{0}\mathclose{]}$. Since $\|q\|_{L^{1}(t,t_{0})}\to+\infty$ as $t\to-\infty$, for each $\rho \in \mathopen{]}0,\alpha\mathclose{[}$ we can consider the strictly increasing solution $v_{\infty}^{\rho}$ of \eqref{bvp-1} provided by Theorem~\ref{th-exist-bvp-1}, and we set $\kappa (\rho):= \phi(v_{\infty}^{\rho}{}'(t_{0}))$. Accordingly, we define
\begin{equation}\label{eq-defyinfty}
y_{\infty}^{\rho} (v) := \dfrac{1}{\sqrt{1-(v_{\infty}^{\rho}{}'(t(v)))^{2}}}-1, 
\end{equation}
where $v\mapsto t(v)$ is the inverse function of $t\mapsto v_{\infty}^{\rho}(t)$.
From the discussion in the Introduction and in Section~\ref{section-2}, $y_{\infty}^{\rho}$ is a solution of the first-order equation
\begin{equation*}
\dot{y}(v) = -\frac{q(t(v))}{\delta} f(v), \quad v\in\mathopen{[}0,\rho\mathclose{]}.
\end{equation*}
Hence, recalling that $f \leq 0$ on $\mathopen{[}0, \rho\mathclose{]}$, we have
\begin{equation*}
-\frac{\eta}{\delta} f(v)
\leq \dot{y}_{\infty}^{\rho}(v) \leq 
-\frac{\|q\|_{L^{\infty}(-\infty,t_{0})}}{\delta} f(v) ,
\quad \text{for all $v\in\mathopen{[}0,\rho\mathclose{]}$.}
\end{equation*}
By the definition of $F$ and the fact that $y_{\infty}^{\rho}(0)=0$, integrating on $\mathopen{[}0,\rho\mathclose{]}$ gives
\begin{equation}\label{estimate-y}
-\frac{\eta}{\delta} F(\rho) \leq y_{\infty}^{\rho}(\rho) \leq -\frac{\|q\|_{L^{\infty}(-\infty,t_{0})}}{\delta} F(\rho).
\end{equation}
Next, observing that
\begin{align*}
\phi(v_{\infty}^{\rho}{}'(t)) = \sqrt{(y_{\infty}^{\rho}(v_{\infty}^{\rho}(t)))^{2}+2y_{\infty}^{\rho}(v_{\infty}^{\rho}(t))}, 
\end{align*}
and since the function $s\mapsto\sqrt{s^{2}+2s}$ is strictly increasing, from \eqref{estimate-y} we deduce that
\begin{equation}\label{kappa-rho-estimate}
\sqrt{\frac{\eta^{2}}{\delta^2}F(\rho)^{2}-2\frac{\eta}{\delta} F(\rho)} 
\leq \kappa(\rho) \leq 
\sqrt{\frac{\|q\|_{L^{\infty}(-\infty,t_{0})}^{2}}{\delta^2}F(\rho)^{2}-2\frac{\|q\|_{L^{\infty}(-\infty,t_{0})}}{\delta}F(\rho)}.
\end{equation}
Finally, it is straightforward that
\begin{equation}\label{rho-0}
\lim_{\rho\to0^{+}} (\rho, \kappa(\rho)) = (0,0).
\end{equation}

Second, let us consider the interval $\mathopen{[}t_{0},+\infty\mathclose{[}$, where the equation is autonomous.
Defining $\mathcal{E}(v,w)$ as in \eqref{def-energy},
the solutions $(v,w)$ of the associated planar system \eqref{syst-sect-3} whose orbit lies on the level line $\mathcal{E}(v,w)=\mathcal{E}(1,0)$ satisfy
\begin{equation*}
\sqrt{1+w^{2}} - 1 + \frac{c}{\delta} F(v) = \frac{c}{\delta} F(1).
\end{equation*}
Therefore,
\begin{equation}\label{curve-c2}
w = w(v) = \sqrt{\frac{c^{2}}{\delta^2}(F(1)-F(v))^{2} + 2 \frac{c}{\delta} (F(1)-F(v))}.
\end{equation}
In order to prove the existence of a heteroclinic solution, we have to show that the two parametric curves $(v, \kappa(v))$ and $(v, w(v))$ intersect. To this end, we show that the function $v \mapsto w(v)-\kappa(v)$ changes sign at least once in $\mathopen{]}0, \alpha\mathclose{[}$. Due to \eqref{rho-0} and the fact that $w(0)=\sqrt{\frac{c^{2}}{\delta^2}(F(1))^{2} + 2 \frac{c}{\delta} F(1)} >0$ (since $F(1)>0$), we have that $\lim_{v \to 0^{+}} (w(v)-\kappa(v)) > 0$. On the other hand, we prove that \eqref{cond-c} implies that $w(v)-\kappa(v) < 0$ for some $v \in \mathopen{]}0, \alpha\mathclose{[}$. 
By contradiction, assume that $w(v) - \kappa(v) > 0$ for every $v \in \mathopen{]}0, \alpha\mathclose{[}$. Then, thanks to \eqref{kappa-rho-estimate},
\begin{equation*}
\sqrt{\frac{c^{2}}{\delta^2}(F(1)-F(v))^{2} + 2 \frac{c}{\delta} (F(1)-F(v))} =w(v) > \kappa(v) \geq \sqrt{\frac{\eta^{2}}{\delta^2} F(v)^{2}-2\frac{\eta}{\delta} F(v)}.
\end{equation*}
Using again the fact that the function $s\mapsto\sqrt{s^{2}+2s}$ is strictly increasing, for every $v\in\mathopen{]}0,\alpha\mathclose{[}$ one then has
\begin{equation*}
\frac{c}{\delta} (F(1)-F(v)) > -\frac{\eta}{\delta} F(v), 
\end{equation*}
whence
\begin{equation}\label{eq-contr}
c > \eta \dfrac{-F(v)}{F(1)-F(v)}, 
\end{equation}
a contradiction with \eqref{cond-c}, since the right-hand side in \eqref{eq-contr} is monotone in $v$.  The proof is complete.
\end{proof}

\begin{remark}\label{rem-3.2}
Since 
\begin{equation*}
\sup_{v\in\mathopen{[}0,\alpha\mathclose{]}} \frac{-F(v)}{F(1)-F(v)}=\frac{-F(\alpha)}{F(1)-F(\alpha)},
\end{equation*}
condition \eqref{cond-c} is sufficient for the above argument. Moreover, in case $q$ is constant in $\mathopen{]}-\infty,t_{0}\mathclose{[}$, it produces the fourth alternative in the statement of Proposition~\ref{prop-intro}, allowing one to recover the existence result for the problem with a two-step weight $q$.
\hfill$\lhd$
\end{remark}

The analog for the case when $q$ is definitively constant at $-\infty$, involving problem \eqref{bvp-2}, provides a strictly increasing heteroclinic solution $v$ of \eqref{eq-sect-3} with $v(t_{0}) \in\mathopen{]}\alpha,1\mathclose{[}$ and can be formulated as follows. We give an outline of the proof for the reader's convenience.

\begin{theorem}\label{th-main-b}
Let $\delta > 0$ and let $q\in L^{\infty}(t_{0}, +\infty)$ for some $t_{0} \in \mathbb{R}$. Assume that $q$ fulfills the following two assumptions:
\begin{itemize}
\item there exists $\eta>0$ for which $q(t)\geq\eta$ for almost every $t\in\mathopen{]}t_{0}, +\infty\mathclose{[}$;
\item there exists $c >0$ for which $q\equiv c$ in $\mathopen{]}-\infty, t_{0}\mathclose{]}$.
\end{itemize} 
Moreover, let $f$ be a Lipschitz continuous function satisfying \ref{hp-f-1} and \ref{hp-f-2} and assume that $f$ is bistable, that is, $\alpha=\beta$. Then, if 
\begin{equation}\label{cond-c2}
c \geq \Vert q \Vert_{L^{\infty}(t_{0}, +\infty)} \dfrac{F(1)-F(\alpha)}{-F(\alpha)},
\end{equation}
there exists a strictly increasing heteroclinic solution of \eqref{eq-sect-3}.
\end{theorem}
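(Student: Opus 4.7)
The plan is to mirror the argument of Theorem~\ref{th-main}, interchanging the roles of the left and right half-lines. Specifically, on the right half-line $\mathopen{[}t_0,+\infty\mathclose{[}$, for each $\rho\in\mathopen{]}\alpha,1\mathclose{[}$ I would invoke Theorem~\ref{th-exist-bvp-2} to produce a strictly increasing solution $v_{\infty}^{\rho}$ of \eqref{bvp-2}, and set
$\kappa(\rho):=\phi(v_{\infty}^{\rho}{}'(t_{0}))$; this parametrizes the curve of initial phase-plane points at $t_0$ for solutions that converge to $1$ at $+\infty$. On the left half-line $\mathopen{]}-\infty,t_{0}\mathclose{]}$, since $q\equiv c$ the equation is autonomous, so I would consider the orbit of \eqref{syst-sect-3} through $(0,0)$, i.e.\ the level set $\mathcal{E}(v,w)=0$ from \eqref{def-energy}, yielding
\begin{equation*}
w_{L}(v)=\sqrt{\tfrac{c^{2}}{\delta^{2}}F(v)^{2}-\tfrac{2c}{\delta}F(v)},\qquad v\in\mathopen{[}0,v_{0}\mathclose{]}.
\end{equation*}
A heteroclinic corresponds precisely to an intersection of the two curves $(v,\kappa(v))$ and $(v,w_{L}(v))$ in $\mathopen{]}\alpha,v_{0}\mathclose{]}$.

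To estimate $\kappa(\rho)$ I would use the first-order reduction \eqref{eq-first}--\eqref{eq-y}. Let $y_{\infty}^{\rho}$ be defined by \eqref{eq-defyinfty}; since $v_{\infty}^{\rho}{}'(+\infty)=0$ we have $y_{\infty}^{\rho}(1)=0$, and integrating $\dot y=-\tfrac{q(t(v))}{\delta}f(v)$ between $\rho$ and $1$ yields
\begin{equation*}
y_{\infty}^{\rho}(\rho)=\frac{1}{\delta}\int_{\rho}^{1}q(t(v))f(v)\,\mathrm{d}v.
\end{equation*}
Because $f\geq0$ on $\mathopen{[}\alpha,1\mathclose{]}$ and $\eta\leq q\leq \|q\|_{L^{\infty}(t_{0},+\infty)}$ a.e., I obtain
$\tfrac{\eta}{\delta}(F(1)-F(\rho))\leq y_{\infty}^{\rho}(\rho)\leq\tfrac{\|q\|_{L^{\infty}(t_{0},+\infty)}}{\delta}(F(1)-F(\rho))$,
and then, using $\kappa(\rho)=\sqrt{(y_{\infty}^{\rho}(\rho))^{2}+2y_{\infty}^{\rho}(\rho)}$ together with the strict monotonicity of $s\mapsto\sqrt{s^{2}+2s}$ on $\mathopen{[}0,+\infty\mathclose{[}$, I get the analog of \eqref{kappa-rho-estimate}:
\begin{equation*}
\sqrt{\tfrac{\eta^{2}}{\delta^{2}}(F(1)-F(\rho))^{2}+\tfrac{2\eta}{\delta}(F(1)-F(\rho))}\leq\kappa(\rho)\leq\sqrt{\tfrac{\|q\|_{L^{\infty}(t_{0},+\infty)}^{2}}{\delta^{2}}(F(1)-F(\rho))^{2}+\tfrac{2\|q\|_{L^{\infty}(t_{0},+\infty)}}{\delta}(F(1)-F(\rho))}.
\end{equation*}
In particular, $\kappa(v_{0})>0$, while $w_{L}(v_{0})=0$, so the function $g(v):=w_{L}(v)-\kappa(v)$ satisfies $g(v_{0})<0$.

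The core step is then to show that $g$ must also be nonnegative at some point of $\mathopen{]}\alpha,v_{0}\mathclose{[}$. I would argue by contradiction, exactly in the spirit of the proof of Theorem~\ref{th-main}: if $w_{L}(v)<\kappa(v)$ for every $v\in\mathopen{]}\alpha,v_{0}\mathclose{[}$, then combining with the upper bound on $\kappa$ and again using the strict monotonicity of $s\mapsto\sqrt{s^{2}+2s}$ one obtains
$
-\tfrac{c}{\delta}F(v)<\tfrac{\|q\|_{L^{\infty}(t_{0},+\infty)}}{\delta}(F(1)-F(v)),
$
i.e.\ $c<\|q\|_{L^{\infty}(t_{0},+\infty)}\tfrac{F(1)-F(v)}{-F(v)}$, for every $v\in\mathopen{]}\alpha,v_{0}\mathclose{[}$. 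Since the map $v\mapsto\tfrac{F(1)-F(v)}{-F(v)}$ is strictly increasing on $\mathopen{]}\alpha,v_{0}\mathclose{[}$ with infimum $\tfrac{F(1)-F(\alpha)}{-F(\alpha)}$ attained in the limit $v\to\alpha^{+}$, this contradicts \eqref{cond-c2}. Continuity of $\kappa$ (from continuous dependence on initial data, cf.\ Theorem~\ref{th-exist-bvp-2}) and of $w_{L}$ then provide, via the intermediate value theorem, some $\rho^{\star}\in\mathopen{]}\alpha,v_{0}\mathclose{[}$ with $w_{L}(\rho^{\star})=\kappa(\rho^{\star})$. Concatenating the left autonomous branch reaching $(\rho^{\star},\kappa(\rho^{\star}))$ at time $t_{0}$ with the right branch $v_{\infty}^{\rho^{\star}}$ yields the desired strictly increasing heteroclinic.

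The main obstacle I anticipate is the boundary/limit behaviour near $\rho=\alpha^{+}$, where Theorem~\ref{th-exist-bvp-2} is not directly applicable: one must verify that $\kappa$ extends continuously (or at least with a controlled limit) to $\alpha^{+}$ in order to implement the strict-inequality contradiction and to certify that the intersection occurs genuinely inside $\mathopen{]}\alpha,v_{0}\mathclose{[}$. A secondary technical point is the verification that the matching at $t_{0}$ gives a solution in the absolutely-continuous sense defined in the Introduction, which is automatic because both $v$ and $\phi(v')$ are continuous at $t_{0}$ by construction.
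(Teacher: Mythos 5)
Your proposal matches the paper's own proof essentially step for step: the same estimate on $y_{\infty}^{\rho}(\rho)$ from Theorem~\ref{th-exist-bvp-2}, the same comparison with the autonomous level curve through $(0,0)$ on the left half-line, the same sign check at $v_{0}$, and the same contradiction argument using the monotonicity of $v\mapsto\frac{F(1)-F(v)}{-F(v)}$ against \eqref{cond-c2}. Your formula for the left branch $w_{L}(v)=\sqrt{\tfrac{c^{2}}{\delta^{2}}F(v)^{2}-\tfrac{2c}{\delta}F(v)}$ is in fact the correct one (the paper's display has a sign slip there), and the boundary concern you raise at $\rho\to\alpha^{+}$ is present in the paper's argument as well, resolved by continuity of both curves up to $v=\alpha$.
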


\begin{proof}
Similarly as in the proof of Theorem~\ref{th-main}, one first considers the interval $\mathopen{]}t_{0},+\infty\mathclose{]}$, working with the strictly increasing solution $v_{\infty}^{\rho}$ provided by Theorem~\ref{th-exist-bvp-2}. Defining $y_\infty^\rho$ as in \eqref{eq-defyinfty}, recalling the positive sign of $f$ in $[\rho, 1]$ and the fact that $y_\infty^\rho(1)=0$ one then has 
\begin{equation*}
\frac{\eta}{\delta} (F(1)-F(\rho)) \leq y_{\infty}^{\rho}(\rho) \leq \frac{\|q\|_{L^{\infty}(t_{0},+\infty)}}{\delta} (F(1)-F(\rho)), 
\end{equation*}
yielding
\begin{align*}
&\sqrt{\frac{\eta^{2}}{\delta^2}(F(1)-F(\rho))^{2}-2\frac{\eta}{\delta} (F(1)-F(\rho))} 
\leq \kappa(\rho) 
\\
&\leq 
\sqrt{\frac{\|q\|_{L^{\infty}(t_{0},+\infty)}^{2}}{\delta^2}(F(1)-F(\rho))^{2}-2\frac{\|q\|_{L^{\infty}(t_{0},+\infty)}}{\delta}(F(1)-F(\rho))},
\end{align*}
where $\kappa (\rho):= \phi(v_{\infty}^{\rho}{}'(t_{0}))$.
On the other hand, the energy curve corresponding to the solutions $(v,w)$ of \eqref{syst-sect-3} whose orbit emanates from $
(0, 0)$ is given by 
\begin{equation*}
\sqrt{1+w^{2}} - 1 + \frac{c}{\delta} F(v) = 0, 
\quad \text{ namely } \quad 
w(v) = \sqrt{\frac{c^{2}}{\delta^2}F(v)^{2} + 2 \frac{c}{\delta} F(v)},
\end{equation*}
making sense only for $v \in [0, v_{0}]$. 
Now, the function $v \mapsto w(v)-\kappa(v)$ is such that $w(v_{0})-\kappa(v_{0}) < 0$, hence it suffices to show that $\kappa(v) < w(v)$ for some $v \in \mathopen{]} \beta, v_{0} \mathclose{[}$ in order to prove the statement. If by contradiction it were $w(v) - \kappa(v) < 0$ for every $v \in \mathopen{]} \beta, v_{0} \mathclose{[}$, thanks to the above estimates this would lead to 
\begin{equation*}
-\frac{c}{\delta} F(v) < \frac{\|q\|_{L^{\infty}(t_{0},+\infty)}}{\delta} (F(1)-F(v)) 
\end{equation*}
(recall that $F(v) < 0$ for $v < v_{0}$), 
whence
\begin{equation}\label{eq-aux1}
c < \|q\|_{L^{\infty}(t_{0},+\infty)} \dfrac{F(1)-F(v)}{-F(v)}
\end{equation}
for every $v \in \mathopen{]} \beta, v_{0} \mathclose{[}$, 
a contradiction with \eqref{cond-c2} since the right-hand side in \eqref{eq-aux1} is monotone in $v$.
\end{proof}

\begin{remark}\label{rem-3.3}
If $\alpha \neq \beta$, the argument in the proofs of Theorem~\ref{th-main} and Theorem~\ref{th-main-b} still works but will not provide, in general, an increasing heteroclinic. However, as for Theorem~\ref{th-main}, one will find a heteroclinic which is definitively increasing both at $-\infty$ (thanks to the construction in Section \ref{section-3.2}) and at $+\infty$ (due to the positive sign of $f$ in a left neighborhood of $1$), possibly displaying a certain number of monotonicity changes in between, according to the number of sign changes of $F(1)-F(v)$. The picture for Theorem~\ref{th-main-b} is of course reversed.
\hfill$\lhd$
\end{remark}

Concerning homoclinics, one can carry out a similar argument in order to intersect, in the upper phase-plane, the orbit corresponding to the solution of \eqref{bvp-1} with the solution of \eqref{syst-sect-3} passing through the point $(v_{0}, 0)$ (which for $t \to +\infty$ converges to $(0, 0)$). One then simply has to replace $F(1)$ with $F(v_{0})$ (which is equal to $0$) in the statement of Theorem~\ref{th-main}, yielding the existence of a homoclinic solution $v$ of \eqref{eq-sect-3}, taking $v_{0}$ as maximum value, with $v(t_{0})\in\mathopen{[}0,\alpha\mathclose{[}$. For simplicity, we only give the statement in case $q$ is definitively constant at $+\infty$.

\begin{theorem}\label{th-main-h}
Let $\delta > 0$ and let $q\in L^{\infty}(-\infty,t_{0})$ for some $t_{0} \in \mathbb{R}$. Assume that $q$ fulfills the following two assumptions:
\begin{itemize}
\item there exists $\eta>0$ for which $q(t)\geq\eta$ almost everywhere in $\mathopen{]}-\infty,t_{0}\mathclose{[}$;
\item there exists $c > 0$ for which $q\equiv c$ in $\mathopen{[}t_{0},+\infty\mathclose{[}$.
\end{itemize} 
Moreover, let $f$ be a Lipschitz continuous function satisfying \ref{hp-f-1} and \ref{hp-f-2} and assume that $f$ is bistable, that is, $\alpha=\beta$. Then, if $c \leq \eta$, 
there exists a homoclinic solution $v$ of \eqref{eq-sect-3} taking $v_{0}$ as maximum value.
\end{theorem}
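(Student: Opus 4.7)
The plan is to mirror the argument of Theorem~\ref{th-main}, following the suggestion given in the paragraph immediately before the statement to ``replace $F(1)$ with $F(v_{0})=0$''. Concretely, I would intersect, in the upper half of the phase-plane, the parametric curve generated by the family of solutions of~\eqref{bvp-1} on the left with the autonomous orbit of~\eqref{syst-sect-3} through the point $(v_{0},0)$ on the right, which is precisely the autonomous homoclinic loop of the right half-system and sits on the energy level $\mathcal{E}(v_{0},0)=(c/\delta)F(v_{0})=0$.

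For every $\rho\in\mathopen{]}0,\alpha\mathclose{[}$, Theorem~\ref{th-exist-bvp-1} yields a strictly increasing solution $v_{\infty}^{\rho}$ of~\eqref{bvp-1}, and I would set $\kappa(\rho):=\phi(v_{\infty}^{\rho}{}'(t_{0}))$. Passing to the first-order reduction $y_{\infty}^{\rho}$ as in~\eqref{eq-defyinfty}, and exploiting the sign of $f$ on $[0,\alpha]$, the a.e.\ bounds $\eta\le q\le\|q\|_{L^{\infty}(-\infty,t_{0})}$, and the monotonicity of $s\mapsto\sqrt{s^{2}+2s}$ on $\mathopen{[}0,+\infty\mathclose{[}$, I would recover the two-sided estimate~\eqref{kappa-rho-estimate} exactly as in the proof of Theorem~\ref{th-main}. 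On $[t_{0},+\infty)$ the system is autonomous and the upper branch of the energy-zero orbit is
\[
w(v)=\sqrt{\tfrac{c^{2}}{\delta^{2}}F(v)^{2}-2\tfrac{c}{\delta}F(v)},\qquad v\in[0,v_{0}],
\]
and any trajectory departing from $(\rho,w(\rho))$ with $\rho\in\mathopen{]}0,\alpha\mathclose{[}$ reaches $(v_{0},0)$ in finite time before relaxing to $(0,0)$ as $t\to+\infty$ along the lower branch. The matching step then consists in producing $\rho^{*}\in\mathopen{]}0,\alpha\mathclose{[}$ with $\kappa(\rho^{*})=w(\rho^{*})$. I would argue by contradiction, in the spirit of Theorem~\ref{th-main}: assuming $w(v)>\kappa(v)$ on $\mathopen{]}0,\alpha\mathclose{[}$, the lower bound in~\eqref{kappa-rho-estimate} and the monotonicity of $s\mapsto\sqrt{s^{2}+2s}$ would force $\tfrac{c}{\delta}(-F(v))>\tfrac{\eta}{\delta}(-F(v))$ on $\mathopen{]}0,\alpha\mathclose{[}$, hence $c>\eta$, contradicting the hypothesis $c\le\eta$. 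Once $\rho^{*}$ is obtained, the homoclinic is built by gluing $v_{\infty}^{\rho^{*}}$ on the left with the autonomous trajectory through $(\rho^{*},\kappa(\rho^{*}))$ on the right: the resulting function is of class $\mathcal{C}^{1}$, satisfies \eqref{eq-sect-3} a.e., tends to $0$ at $\pm\infty$, attains $v_{0}$ exactly once, and exhibits a single change of monotonicity at the corresponding time.

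The main obstacle is the intersection step. In Theorem~\ref{th-main} one has $w(0)>0=\kappa(0)$ for free from the assumption $F(1)>0$, so the contradiction argument, combined with continuity, automatically gives a sign-change of $w-\kappa$ and hence an interior crossing via the intermediate value theorem. In the homoclinic setting the formal replacement of $F(1)$ by $F(v_{0})=0$ forces $w(0)=\kappa(0)=0$, and both curves leave the origin with the same order in $\rho$; the endpoint is therefore degenerate. To make the contradiction argument actually produce an \emph{interior} crossing, rather than a mere tangency at $\rho=0$, one has to complement it with a sharper asymptotic comparison of $w$ and $\kappa$ near $\rho=0$, using the precise leading-order information in~\eqref{kappa-rho-estimate}, or with a refinement of the shooting in $\rho$. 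This endpoint analysis is the essential technical discrepancy between the heteroclinic and the homoclinic cases.
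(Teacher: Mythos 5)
Your proposal reproduces exactly the route the paper intends (the text offers nothing beyond the recipe ``replace $F(1)$ by $F(v_{0})=0$''), and the obstacle you isolate at $\rho=0$ is precisely where that recipe breaks down. However, you present it as a technical discrepancy that a sharper endpoint analysis should repair, and this is the point I must push back on: carry out the asymptotic comparison you propose and you will find the gap cannot be closed. From \eqref{kappa-rho-estimate} with $F(1)$ replaced by $0$ one has, for every $\rho\in\mathopen{]}0,\alpha\mathclose{[}$,
\begin{equation*}
\kappa(\rho)\;\geq\;\sqrt{\tfrac{\eta^{2}}{\delta^{2}}F(\rho)^{2}-\tfrac{2\eta}{\delta}F(\rho)}\;\geq\;\sqrt{\tfrac{c^{2}}{\delta^{2}}F(\rho)^{2}-\tfrac{2c}{\delta}F(\rho)}\;=\;w(\rho),
\end{equation*}
the second inequality because $-F(\rho)>0$, $c\leq\eta$ and $s\mapsto\sqrt{s^{2}+2s}$ is increasing. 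Thus under the hypothesis $c\leq\eta$ the shot curve lies \emph{above} the autonomous loop on the whole of $\mathopen{]}0,\alpha\mathclose{[}$, including near $\rho=0$, where $w(\rho)\sim\sqrt{2c(-F(\rho))/\delta}$ while $\kappa(\rho)\geq\sqrt{2\eta(-F(\rho))/\delta}\,(1+o(1))$. The function $w-\kappa$ has the same non-positive sign at both ends and everywhere in between, so there is no sign change for the intermediate value theorem to exploit; the contradiction step merely re-derives what the estimate already gives. An intersection forces equality, $y_{\infty}^{\rho}(\rho)=-\frac{c}{\delta}F(\rho)$, i.e.\ $\int_{0}^{\rho}\bigl(q(t(s))-c\bigr)\bigl(-f(s)\bigr)\,\mathrm{d}s=0$ with a nonnegative integrand and $-f>0$ on $\mathopen{]}0,\rho\mathclose{[}$, which forces $q\equiv c$ almost everywhere on $\mathopen{]}-\infty,t_{0}\mathclose{[}$ --- essentially the autonomous case $c=\eta=\|q\|_{L^{\infty}(-\infty,t_{0})}$.

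This is a failure not only of the method but, for $c<\eta$, of the statement itself: take the stepwise weight $q\equiv c_{1}$ on $\mathopen{]}-\infty,t_{0}\mathclose{[}$ and $q\equiv c_{2}=c$ on $\mathopen{[}t_{0},+\infty\mathclose{[}$ with $c_{2}<c_{1}=\eta$. Theorem~\ref{th-main-h} would then yield a homoclinic, whereas Proposition~\ref{prop-intro} grants one only when $c_{2}=c_{1}$ (and indeed matching the energies at $t_{0}$, $-\frac{c_{1}}{\delta}F(\rho)=-\frac{c_{2}}{\delta}F(\rho)$ with $F(\rho)\neq0$, forces $c_{1}=c_{2}$; the same computation rules out a homoclinic whose turning point precedes $t_{0}$). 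So do not look for a refinement of the shooting near $\rho=0$: what is needed is an equality-type (resonance) condition in the spirit of the fourth alternative of Proposition~\ref{prop-intro}, not the one-sided inequality $c\leq\eta$. Your instinct that the degenerate endpoint is the essential discrepancy between the heteroclinic and homoclinic cases is correct; your assessment of it as merely technical is not, and any write-up should flag this as an error in the stated hypothesis rather than attempt to patch the intersection argument.
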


Similarly, if one replaced the branch through $(v_{0}, 0)$ with the one through $(w, 0)$ for $w < v_{0}$, one would find solutions starting increasingly at $-\infty$ and then being definitively periodic, in line with the shape of the solutions of system \eqref{syst-sect-3} with a stepwise constant weight $q$.

\begin{theorem}\label{th-main-h-2}
Let $\delta > 0$ and let $q\in L^{\infty}(-\infty,t_{0})$ for some $t_{0} \in \mathbb{R}$. Assume that $q$ fulfills the following two assumptions:
\begin{itemize}
\item there exists $\eta>0$ for which $q(t)\geq\eta$ almost everywhere in $\mathopen{]}-\infty,t_{0}\mathclose{[}$;
\item there exists $c > 0$ for which $q\equiv c$ in $\mathopen{[}t_{0},+\infty\mathclose{[}$.
\end{itemize} 
Moreover, let $f$ be a Lipschitz continuous function satisfying \ref{hp-f-1} and \ref{hp-f-2} and assume that $f$ is bistable, that is, $\alpha=\beta$.
Then, if there exists $w < v_0$ such that 
\begin{equation}\label{cond-c2-h2}
c \leq \eta\dfrac{-F(\alpha)}{F(w)-F(\alpha)},
\end{equation} 
there exists a solution $v$ of \eqref{eq-sect-3} satisfying $v(-\infty)=0$ which is definitively periodic.
\end{theorem}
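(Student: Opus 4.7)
The plan is to adapt the proof scheme of Theorem~\ref{th-main}, replacing the level curve of the right-hand autonomous system through $(1,0)$ by the closed orbit passing through $(w,0)$. For each $\rho\in\mathopen{]}0,\alpha\mathclose{[}$ I invoke Theorem~\ref{th-exist-bvp-1} to obtain the strictly increasing solution $v_\infty^\rho$ of \eqref{bvp-1}, set $\kappa(\rho):=\phi(v_\infty^\rho{}'(t_0))$, and define $y_\infty^\rho$ as in \eqref{eq-defyinfty}. Since the analysis on $\mathopen{]}-\infty,t_0\mathclose{]}$ only uses $q\geq\eta$ together with the first-order reduction $\dot y = -q(t(v))f(v)/\delta$, the same arguments as in the proof of Theorem~\ref{th-main} give
\begin{equation*}
\sqrt{\tfrac{\eta^{2}}{\delta^{2}}F(\rho)^{2}-2\tfrac{\eta}{\delta}F(\rho)}\leq\kappa(\rho)\leq\sqrt{\tfrac{\|q\|_{L^{\infty}}^{2}}{\delta^{2}}F(\rho)^{2}-2\tfrac{\|q\|_{L^{\infty}}}{\delta}F(\rho)},
\end{equation*}
together with $\lim_{\rho\to0^{+}}(\rho,\kappa(\rho))=(0,0)$.

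On $\mathopen{[}t_0,+\infty\mathclose{[}$ the equation is autonomous with weight $c$, so the closed orbit of \eqref{syst-sect-3} through $(w,0)$ lies on $\{\mathcal{E}=\tfrac{c}{\delta}F(w)\}$ and, in the upper half-plane, is the graph of
\begin{equation*}
\tilde w(v) := \sqrt{\tfrac{c^{2}}{\delta^{2}}(F(w)-F(v))^{2}+2\tfrac{c}{\delta}(F(w)-F(v))},\qquad v\in\mathopen{[}v_-,v_+\mathclose{]},
\end{equation*}
where $v_\pm$ are the two roots of $F(v_\pm)=F(w)$ and satisfy $v_-\leq\alpha\leq v_+$. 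The desired definitively periodic solution is then obtained as soon as some $\rho^\ast\in\mathopen{]}v_-,\alpha\mathclose{[}$ satisfies $\kappa(\rho^\ast)=\tilde w(\rho^\ast)$: gluing $v_\infty^{\rho^\ast}$ with the outgoing autonomous trajectory at $t_0$ then yields a solution of \eqref{eq-sect-3} whose restriction to $\mathopen{[}t_0,+\infty\mathclose{[}$ traces exactly the closed orbit through $(w,0)$, hence is periodic.

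The existence of $\rho^\ast$ follows by an intermediate value argument applied to the continuous function $v\mapsto\tilde w(v)-\kappa(v)$ on $\mathopen{[}v_-,\alpha\mathclose{]}$, in complete analogy with the closing lines of Theorem~\ref{th-main}. One has $\tilde w(v_-)=0<\kappa(v_-)$, since $v_\infty^\rho$ is strictly increasing, whereas the reverse inequality in the limit $v\to\alpha^-$ is forced by the hypothesis $c\leq\eta\,\frac{-F(\alpha)}{F(w)-F(\alpha)}$: assuming the opposite, the lower bound on $\kappa$ combined with the strict monotonicity of $s\mapsto\sqrt{s(s+2)}$ for $s\geq 0$ would produce $c>\eta\,\frac{-F(v)}{F(w)-F(v)}$ throughout $\mathopen{]}v_-,\alpha\mathclose{[}$ (exactly as in the argument surrounding \eqref{eq-contr}), and letting $v\to\alpha^-$ would contradict the hypothesis by monotonicity of the right-hand side in $v$.

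The main technical obstacle I foresee concerns the endpoint $v=v_-$, at which $\tilde w$ vanishes with a vertical tangent (the closed orbit meets the $v$-axis transversally): the initial sign comparison between $\tilde w$ and $\kappa$ must therefore be handled through a small neighbourhood of $v_-$ rather than by a pointwise evaluation, before the intermediate value theorem can be applied on the closed interval.
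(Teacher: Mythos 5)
Your overall strategy (replace the level curve through $(1,0)$ by the closed orbit through $(w,0)$ and intersect it with the shooting branch $(\rho,\kappa(\rho))$) is exactly the one the paper indicates, and you are right to flag that the left-hand comparison must be made at $v_-$ rather than at $v=0$, since the closed orbit does not reach the $w$-axis. The gap is in the intermediate value step. At $v_-$ you correctly get $\tilde w(v_-)-\kappa(v_-)<0$; but at $\alpha$ the hypothesis \eqref{cond-c2-h2} reads $\tfrac{c}{\delta}(F(w)-F(\alpha))\le-\tfrac{\eta}{\delta}F(\alpha)$, i.e.\ the $y$-value of the closed orbit at $\alpha$ lies below the \emph{lower} bound for $y_\infty^\rho(\alpha)$, hence $\tilde w(\alpha)\le\kappa(\alpha)$: the \emph{same} sign as at $v_-$, not the reverse one. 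Your contradiction argument does not repair this: from the assumption $\tilde w(v)<\kappa(v)$ together with the lower bound $\kappa(v)\ge\sqrt{\eta^2F(v)^2/\delta^2-2\eta F(v)/\delta}$ no comparison between $\tilde w(v)$ and that lower bound follows; the chain producing $c>\eta\tfrac{-F(v)}{F(w)-F(v)}$ requires $\tilde w(v)>\kappa(v)$, the opposite of what you assume. (Note also that $\tfrac{-F(v)}{F(w)-F(v)}$ is \emph{decreasing} on $\mathopen{]}v_-,\alpha\mathclose{[}$ and blows up at $v_-$, unlike its analogue in Theorem~\ref{th-main}, so "$c>$ this ratio for all $v$" fails trivially near $v_-$ and yields no information near $\alpha$.) No sign change is established, and the IVT cannot be invoked.

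The difficulty is structural. Both endpoints of the branch $\{(\rho,\kappa(\rho))\}$ lie (weakly) \emph{outside} the closed orbit: $(0,0)$ because $v_->0$, and $(\alpha,\kappa(\alpha))$ because \eqref{cond-c2-h2} places it at or above the top of the orbit; a curve with both endpoints outside a Jordan curve need not meet it. To force the intersection one must push $(\alpha,\kappa(\alpha))$ strictly \emph{inside}, i.e.\ $\kappa(\alpha)<\tilde w(\alpha)$, which calls for the \emph{upper} bound on $\kappa$ and the reversed inequality $c\ge\|q\|_{L^\infty(-\infty,t_0)}\tfrac{-F(\alpha)}{F(w)-F(\alpha)}$. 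This is consistent with the stepwise-constant case: by the first item of Proposition~\ref{prop-intro}, definitively periodic solutions occur precisely for $c_2>c_1$, whereas condition \eqref{cond-c2-h2} with $q\equiv c_1$ on $\mathopen{]}-\infty,t_0\mathclose{[}$ is satisfiable for \emph{every} $c$ by taking $w$ close to $\alpha$. So either you work with the reversed condition (and then the argument closes exactly as in Theorem~\ref{th-main}, with the roles of the two bounds on $\kappa$ exchanged), or you must supply an argument that genuinely exploits \eqref{cond-c2-h2} as written, which your current derivation does not.
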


Condition~\eqref{cond-c2-h2} ensures that the two  parametric curves $(v, \kappa(v))$ and $(v, w(v))$ defined in the proof of Theorem~\ref{th-main} intersect (with the only difference that the latter one emanates from $(w, 0)$, with $w < v_0$, rather than from $(v_0, 0)$). Of course, the possibility that these two curves always intersect for $t=t_0$ is due to the possibility of shifting the time along the latter branch. As before, condition \eqref{cond-c2-h2} is independent of the value of $\delta$. 
Similar statements to Theorem~\ref{th-main-h} and Theorem~\ref{th-main-h-2} can be given in case $q$ is definitively constant at $-\infty$, but we omit the details for briefness.

\smallskip

We now turn to nonexistence. Here, it is natural to expect that if $c$ is sufficiently large (respectively, small) then the intersection argument used in the statement of Theorem~\ref{th-main} (respectively, Theorem~\ref{th-main-b}) will not hold. To prove this claim, we first give a necessary condition on $\rho$ for \eqref{bvp-1} to be solvable. 

\begin{lemma}\label{lem-stimapriori}
Let $q$ and $f$ fulfill the assumptions of Theorem~\ref{th-main}. Moreover, assume that
\begin{equation}\label{cond-eta}
\eta>\Vert q \Vert_{L^{\infty}(-\infty, t_{0})} \frac{-F(\alpha)}{F(1)-F(\alpha)}.
\end{equation}
Then, there exists $M < 1$ such that, if \eqref{bvp-1} has a solution, then $\rho \leq M$.
\end{lemma}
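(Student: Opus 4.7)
The plan is to combine the phase-plane monotonicity structure of any solution of \eqref{bvp-1} with the first-order reduction $\dot{y}(v) = -q(t(v))f(v)/\delta$ from \eqref{eq-y}, so as to derive energy-type estimates for $y$ that collide with \eqref{cond-eta} whenever $\rho$ is too close to $1$.

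Given a solution $v$ of \eqref{bvp-1}, first I would set $M_{0} := \max_{t\leq t_{0}} v(t)$; since $v(-\infty)=0$ and $v(t_{0})=\rho>0$, this maximum is attained at some finite $\hat{t} \in \mathopen{]}-\infty,t_{0}\mathclose{]}$, and as $\rho \leq M_{0}$ it is enough to bound $M_{0}$. The case $M_{0}\leq\alpha$ being trivial, assume $M_{0}>\alpha$. I would then show that any critical time $t^{*} < t_{0}$ of $v$ with $v(t^{*}) \in \mathopen{]}0,\alpha\mathclose{[}$ is a strict local minimum: since
\begin{equation*}
\phi(v'(t)) = -\frac{1}{\delta}\int_{t^{*}}^{t} q(s) f(v(s))\,\mathrm{d}s
\end{equation*}
and $q \geq \eta > 0$ a.e.\ while $f(v(\cdot)) < 0$ in a neighbourhood of $t^{*}$, one sees that $\phi(v')$ (hence $v'$) is strictly positive just to the right and strictly negative just to the left of $t^{*}$. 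Letting $t_{\min}$ denote the largest critical time of $v$ in $\mathopen{]}-\infty,\hat{t}\mathclose{[}$ (with $t_{\min} := -\infty$ if no such time exists) and $v_{\min} := v(t_{\min}) \in \mathopen{[}0,\alpha\mathclose{]}$ (extended by $v_{\min}=0$ if $t_{\min}=-\infty$, and with $v_{\min}>0$ ruled out only by uniqueness when $t_{\min}>-\infty$), one obtains that $v$ is strictly increasing on $\mathopen{]}t_{\min},\hat{t}\mathclose{[}$ from $v_{\min}$ to $M_{0}$, with $v'(t_{\min})=v'(\hat{t})=0$ (the case $\hat{t}=t_{0}$, in which $M_{0}=\rho$ and $y(M_{0}) \geq 0$ in place of $y(M_0)=0$, is handled with only inessential modifications below).

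Next I would apply the reduction \eqref{eq-first}--\eqref{eq-y} on this monotone interval: the function $y(v) := 1/\sqrt{1-v'(t(v))^{2}} - 1$ is well-defined and non-negative on $\mathopen{[}v_{\min},M_{0}\mathclose{]}$, vanishes at both endpoints and satisfies $\dot y(v) = -q(t(v))f(v)/\delta$. Integration on $\mathopen{[}v_{\min},\alpha\mathclose{]}$, where $-f \geq 0$ and $F(v_{\min}) \leq F(0) = 0$, gives
\begin{equation*}
y(\alpha) = \frac{1}{\delta}\int_{v_{\min}}^{\alpha} q(t(v))\bigl(-f(v)\bigr)\,\mathrm{d}v \leq \frac{\|q\|_{L^{\infty}(-\infty,t_{0})}}{\delta}\bigl(F(v_{\min})-F(\alpha)\bigr) \leq \frac{\|q\|_{L^{\infty}(-\infty,t_{0})}}{\delta}\bigl(-F(\alpha)\bigr),
\end{equation*}
while integration on $\mathopen{[}\alpha,M_{0}\mathclose{]}$, where $f \geq 0$ and $q \geq \eta$, produces
\begin{equation*}
y(\alpha) \geq \frac{\eta}{\delta}\bigl(F(M_{0})-F(\alpha)\bigr).
\end{equation*}

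Comparing these two estimates I would obtain $\eta(F(M_{0})-F(\alpha)) \leq \|q\|_{L^{\infty}(-\infty,t_{0})}(-F(\alpha))$, that is,
\begin{equation*}
F(M_{0}) \leq F(\alpha)\!\left(1-\frac{\|q\|_{L^{\infty}(-\infty,t_{0})}}{\eta}\right),
\end{equation*}
and condition \eqref{cond-eta} is precisely the statement that the right-hand side here is strictly less than $F(1)$. Since $F$ is strictly increasing on $\mathopen{[}\alpha,1\mathclose{]}$ from $F(\alpha)<0$ to $F(1)>0$, there is a unique $M \in \mathopen{[}v_{0}, 1\mathclose{[}$ with $F(M) = F(\alpha)(1-\|q\|_{L^{\infty}(-\infty,t_{0})}/\eta)$, and one concludes $\rho \leq M_{0} \leq M < 1$. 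The main obstacle will be this monotonicity step: a priori $v$ may oscillate on $\mathopen{]}-\infty,t_{0}\mathclose{[}$, while the $y$-reduction is meaningful only on monotone pieces of $v$; the sign analysis of $(\phi(v'))'$ in the strip $\{0<v<\alpha\}$ is precisely what rules out interior maxima there and isolates a single clean climbing interval ending at $\hat{t}$, on which both one-sided energy bounds for $y(\alpha)$ can be played against each other.
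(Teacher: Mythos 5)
Your proof is correct and rests on the same key estimate as the paper's: extract a maximal monotone climbing interval ending at the point where the solution attains its maximum, pass to the first-order reduction $\dot{y}(v)=-q(t(v))f(v)/\delta$, and split the integral at $\alpha$, bounding the contribution on $\mathopen{[}v_{\min},\alpha\mathclose{]}$ from above by $\Vert q\Vert_{L^{\infty}(-\infty,t_{0})}(-F(\alpha))/\delta$ and the contribution on $\mathopen{[}\alpha,M_{0}\mathclose{]}$ from below via $\eta$, so that \eqref{cond-eta} forces $F(M_{0})<F(1)$. The only difference is presentational: the paper argues by contradiction along a sequence $\rho_{n}\to 1$ and reads off a sign contradiction for $y_{n}(\rho_{n})$, whereas you obtain the explicit threshold $F(M)=F(\alpha)\bigl(1-\Vert q\Vert_{L^{\infty}(-\infty,t_{0})}/\eta\bigr)$ directly (and you are somewhat more explicit about why the climbing interval starts at a level $v_{\min}\leq\alpha$, which the paper handles by simply treating the case $v_{n}(\hat{t}_{n})\geq\alpha$ separately).
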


\begin{proof}
By contradiction, let us assume that there exist $\rho_n \to 1$ and a sequence of solutions $v_n$ of \eqref{bvp-1} such that $v_n(t_{0})=\rho_n$. Then, there exist $\hat{t}_n \in \mathopen{[}-\infty,t_{0}\mathclose{[}$ and $\check{t}_n \in \mathopen{]}-\infty,t_{0}\mathclose{]}$, with $\hat{t}_n < \check{t}_n$, such that $v_n'(\hat{t}_n)=0$, $v_n(\check{t}_n)=\rho_n$ and $v_n' > 0$ on $\mathopen{]}\hat{t}_n,\check{t}_n\mathclose{[}$. Defining 
$y_n(v)= 1/\sqrt{1-(v_n'(t(v)))^{2}}-1$,
it follows that $y_n(v_n(\hat{t}_n))=0$ and hence, integrating on $\mathopen{]}\hat{t}_n,\check{t}_n\mathclose{[}$, one has
\begin{equation*}
y_n(v_n(\check{t}_n)) = \int_{v_n(\hat{t}_n)}^{v_n(\check{t}_n)} \frac{-q(t(s))}{\delta} f(s) \, \mathrm{d}s. 
\end{equation*}
If $v_n(\hat{t}_n) \geq \alpha$, this is trivially a contradiction, since the right-hand side is strictly negative while the left-hand one is positive. Otherwise, from the above we have 
\begin{align*}
y_n(v_n(\check{t}_n)) & \leq \int_{v_n(\hat{t}_n)}^{\alpha} \frac{-q(t(s))}{\delta} f(s) \, \mathrm{d}s + \int_{\alpha}^{\rho_n} \frac{-q(t(s))}{\delta} f(s) \, \mathrm{d}s \\ 
& \leq \frac{1}{\delta} \big(-\Vert q \Vert_{L^{\infty}(-\infty, t_{0})} F(\alpha) - \eta (F(\rho_n)-F(\alpha)) \big).
\end{align*}
Using \eqref{cond-eta}, we then get the same sign contradiction, proving the statement.
\end{proof}

We remark that \eqref{cond-eta} implies that
\begin{equation*}
\eta \leq q(t) <  \eta \dfrac{F(1)-F(\alpha)}{-F(\alpha)}, \quad \text{for every $t\in\mathopen{]}-\infty,t_{0}\mathclose{[}$,}
\end{equation*}
thus the smaller $\eta$, the smaller the oscillation allowed for $q$.

The nonexistence result then reads as follows.

\begin{theorem}\label{thm-nonesistenza}
Let $q$ and $f$ fulfill the assumptions of Theorem~\ref{th-main}. Then, if \eqref{cond-eta} and
\begin{equation}\label{cond-c3}
c > \| q \|_{L^{\infty}(-\infty,t_{0})}\frac{- F(\alpha) }{F(1)-F(\alpha)}
\end{equation}
hold, no heteroclinic solutions of \eqref{eq-sect-3} exist. 
\end{theorem}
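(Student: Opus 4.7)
The plan is to argue by contradiction: assume that a strictly increasing heteroclinic solution $v$ of \eqref{eq-sect-3} exists and set $\rho := v(t_0) \in \mathopen{]}0, 1\mathclose{[}$. Since $v' > 0$ on the whole of $\mathbb{R}$, the first-order reduction \eqref{eq-y} in the variable $y$ defined by \eqref{eq-first} is available on the full open interval $\mathopen{]}0, 1\mathclose{[}$, with boundary values $y(0^{+}) = 0 = y(1^{-})$. On $\mathopen{[}t_0, +\infty\mathclose{[}$, where $q \equiv c$, integrating $\dot y = -c f(v)/\delta$ from $\rho$ to $1$ yields
\begin{equation*}
y(\rho) = \frac{c}{\delta}\bigl{(}F(1) - F(\rho)\bigr{)},
\end{equation*}
while on $\mathopen{]}-\infty, t_0\mathclose{]}$ integrating $\dot y = -q(t(v)) f(v)/\delta$ from $0$ to $\rho$ yields
\begin{equation*}
y(\rho) = -\int_{0}^{\rho} \frac{q(t(s))}{\delta} f(s)\,\mathrm{d}s.
\end{equation*}
These two expressions must coincide, and the idea is to estimate the second one from above using $\eta \leq q \leq \|q\|_{L^\infty(-\infty,t_0)}$ combined with the signs of $f$, and to exploit this to force a contradiction with one of \eqref{cond-eta} and \eqref{cond-c3}, depending on the position of $\rho$ relative to $\alpha$.

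If $\rho \leq \alpha$, then $f \leq 0$ on $\mathopen{[}0, \rho\mathclose{]}$, so $y(\rho) \leq \bigl{(}\|q\|_{L^\infty}/\delta\bigr{)}(-F(\rho))$, forcing
\begin{equation*}
c \leq \|q\|_{L^\infty} \frac{-F(\rho)}{F(1) - F(\rho)} \leq \|q\|_{L^\infty} \frac{-F(\alpha)}{F(1) - F(\alpha)},
\end{equation*}
where the second inequality uses the monotonicity observed in Remark~\ref{rem-3.2} (which relies on $F(1) > 0$); this contradicts \eqref{cond-c3}.

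If instead $\rho > \alpha$, I would split the integral at $v = \alpha$ and use $f \leq 0$ on $\mathopen{[}0, \alpha\mathclose{]}$ together with $f \geq 0$ on $\mathopen{[}\alpha, \rho\mathclose{]}$ to obtain
\begin{equation*}
y(\rho) \leq \frac{\|q\|_{L^\infty}}{\delta}\bigl{(}-F(\alpha)\bigr{)} - \frac{\eta}{\delta}\bigl{(}F(\rho) - F(\alpha)\bigr{)},
\end{equation*}
which, matched with the right-side identity, yields
\begin{equation*}
g(\rho) := c\bigl{(}F(1) - F(\rho)\bigr{)} + \eta\bigl{(}F(\rho) - F(\alpha)\bigr{)} \leq \|q\|_{L^\infty}\bigl{(}-F(\alpha)\bigr{)}.
\end{equation*}
Since $g'(\rho) = (\eta - c) f(\rho)$ has constant sign on $\mathopen{[}\alpha, 1\mathclose{]}$, the function $g$ is monotone on this interval, with $g(\alpha) = c(F(1)-F(\alpha))$ and $g(1) = \eta(F(1)-F(\alpha))$; hence $\min_{\mathopen{[}\alpha, 1\mathclose{]}} g = \min(c, \eta)\,(F(1) - F(\alpha))$, and the inequality forces
\begin{equation*}
\min(c, \eta) \leq \|q\|_{L^\infty} \frac{-F(\alpha)}{F(1) - F(\alpha)},
\end{equation*}
contradicting \eqref{cond-c3} when this minimum equals $c$ and \eqref{cond-eta} when it equals $\eta$.

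The main obstacle is precisely this second case, where the heteroclinic crosses $\alpha$ strictly before $t_0$: the one-sided estimate of the first case is no longer available, and one has to trade the upper bound $\|q\|_{L^\infty}$ on the portion where $f \leq 0$ against the lower bound $\eta$ on the portion where $f \geq 0$. It is at this step that both hypotheses \eqref{cond-eta} and \eqref{cond-c3} enter the picture, depending on the sign of $c - \eta$, which clarifies the symmetric role played by the two constants in the statement; Lemma~\ref{lem-stimapriori}, while conceptually related, is not strictly needed for the above argument.
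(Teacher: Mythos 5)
Your argument is correct and rests on the same two ingredients as the paper's proof: the upper bound on the left branch, $y(\rho) \leq \tfrac{1}{\delta}\bigl(-\Vert q \Vert_{L^{\infty}(-\infty,t_0)} F(\alpha) - \eta(F(\rho)-F(\alpha))\bigr)$ (the paper's quantity $\mathcal{B}(\rho)$, obtained by exactly your splitting at $\alpha$), and the energy identity $y(\rho)=\tfrac{c}{\delta}(F(1)-F(\rho))$ on the autonomous right branch. The difference lies in how the resulting inequality is exploited. The paper divides by $F(1)-F(\rho)$ and maximizes the quotient $\mathcal{C}(\rho)$ over $\rho\in[0,M]$; since $\mathcal{C}(\rho)\to+\infty$ as $\rho\to 1^{-}$, this forces it to first establish the a priori bound $\rho\leq M<1$ of Lemma~\ref{lem-stimapriori}. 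You instead keep the inequality in product form, $g(\rho)=c(F(1)-F(\rho))+\eta(F(\rho)-F(\alpha))\leq \Vert q\Vert_{L^{\infty}}(-F(\alpha))$, and observe that $g$ is monotone on $[\alpha,1]$ with endpoint values $c(F(1)-F(\alpha))$ and $\eta(F(1)-F(\alpha))$, each of which exceeds $\Vert q\Vert_{L^{\infty}}(-F(\alpha))$ by \eqref{cond-c3} and \eqref{cond-eta} respectively; this is valid up to and including $\rho=1$, so your remark that Lemma~\ref{lem-stimapriori} is not needed is accurate, and your argument makes transparent which hypothesis is active according to the sign of $c-\eta$. Two small points to tidy up: strict monotonicity of $v$ gives $v'\geq 0$ but not $v'>0$ pointwise, so to justify the $y$-reduction on all of $\mathopen{]}0,1\mathclose{[}$ you should either work on the maximal interval of positivity of $v'$ ending at $t_0$ (as the paper does via $\hat t$, which only improves your upper bound) or note that the $v$-image of $\{v'=0\}$ is a null set; and in the first case you should record that $F(1)-F(\rho)\geq F(1)>0$ before dividing. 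Neither affects the conclusion.
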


\begin{proof}
Assume by contradiction that there exists a heteroclinic solution $v$; let $\rho=v(t_{0})$ and denote by $v_{\infty}^{\rho,-}$ and $v_{\infty}^{\rho,+}$, respectively, the restrictions of $v$ to the intervals $\mathopen{]}-\infty,t_{0}\mathclose{]}$ and $\mathopen{]}t_{0}, +\infty\mathclose{[}$. Since $v_{\infty}^{\rho,-}$ is a solution of \eqref{bvp-1}, by Lemma~\ref{lem-stimapriori} one has $\rho < M$. Our aim is now to prove that the set of points $\{(\rho, v_{\infty}^{\rho,-}{}'(t_{0})) \colon \rho \in \mathopen{[}0,M\mathclose{]}\}$ and the level set $\{(v, w) \colon \mathcal{E}(v, w)=\mathcal{E}(1, 0)\}$ for the energy $\mathcal{E}$ defined in \eqref{def-energy} (which depends on $c$) cannot intersect (in the upper phase-plane $\{\phi(v') > 0\}$) for $c$ sufficiently large, implying a contradiction. We will achieve this goal by showing that \eqref{cond-c3} implies $v_{\infty}^{\rho,-}{}'(t_{0}) < v_{\infty}^{\rho,+}{}'(t_{0})$ for any $\rho \in \mathopen{[}0,M\mathclose{]}$. 

Thus, let $\rho \in \mathopen{[}0,M\mathclose{]}$ be fixed. The conclusion trivially holds if $v_{\infty}^{\rho,-}{}'(t_{0}) \leq 0$, since $v_{\infty}^{\rho,+}{}'(t_{0}) > 0$ thanks to \eqref{curve-c2}. Hence, we can assume $v_{\infty}^{\rho,-}{}'(t_{0}) > 0$; being $v_{\infty}^{\rho,-}{}' > 0$ in a left neighborhood $\mathopen{]}\hat{t},t_{0}\mathclose{[}$ of $t_{0}$, we can repeat the argument in the previous lemma (with $\hat{t}_n=\hat{t}$ and $\check{t}_n=t_{0}$) to obtain 
\begin{equation*}
y_\infty^\rho(\rho) \leq \frac{1}{\delta} \big(-\| q \|_{L^{\infty}(-\infty,t_{0})} F(\alpha) - \eta (F(\rho)-F(\alpha)) \big) =:\mathcal{B}(\rho),
\end{equation*}
where $y_\infty^\rho$ is defined as in \eqref{eq-defyinfty}. 
Since 
\begin{align*}
\phi(v_{\infty}^{\rho,-}{}'(t_{0})) = \sqrt{(y_{\infty}^\rho(\rho))^{2}+2y_{\infty}^\rho(\rho)} \leq \sqrt{\mathcal{B}(\rho)^{2}+2\mathcal{B}(\rho)} , 
\end{align*}
we will obtain a contradiction if it holds
\begin{equation*}
\sqrt{\mathcal{B}(\rho)^{2}+2\mathcal{B}(\rho)} < \sqrt{\frac{c^{2}}{\delta^2}(F(1)-F(\rho))^{2} + 2 \frac{c}{\delta} (F(1)-F(\rho))} = \phi(v_{\infty}^{\rho,-}{}'(t_{0})).
\end{equation*}
As $s\mapsto\sqrt{s^{2}+2s}$ is strictly increasing, this is equivalent to 
\begin{equation}\label{stima-c-grande}
c > \dfrac{-\| q \|_{L^{\infty}(-\infty,t_{0})} F(\alpha) - \eta(F(\rho)-F(\alpha))}{F(1)-F(\rho)} := \mathcal{C}(\rho).
\end{equation}
Since
\begin{equation*}
\mathcal{C}'(\rho) = -\dfrac{f(\rho) \bigl{(} \eta (F(1)-F(\alpha)) +\| q \|_{L^{\infty}(-\infty,t_{0})} F(\alpha) \bigr{)}}{\bigl{(}F(1)-F(\rho)\bigr{)}^{2}},
\end{equation*}
conditions \eqref{cond-eta} and \ref{hp-f-1} ensure that $\mathcal{C}'(\rho)>0$ for every $\rho\in\mathopen{]}0,\alpha\mathclose{[}$, and $\mathcal{C}'(\rho)<0$ for every $\rho\in\mathopen{]}\alpha,M\mathclose{]}$. As a consequence, $\max_{\rho\in\mathopen{[}0,M\mathclose{]}} \mathcal{C}(\rho) = \mathcal{C}(\alpha)$ and \eqref{cond-c3} implies \eqref{stima-c-grande}, concluding the proof.
\end{proof}

\begin{remark}\label{rem-3.3b}
If $M<\alpha$, the bound in \eqref{cond-c3} can actually be improved since the above argument holds in the same way for
\begin{equation*}
c > \frac{-\| q \|_{L^{\infty}(-\infty,t_{0})} F(\alpha) - \eta(F(M)-F(\alpha))}{F(1)-F(M)}.
\end{equation*}
Moreover, notice that in the stepwise constant case $q \equiv c_1$ on $\mathopen{]}-\infty,t_{0}\mathclose{]}$, $q \equiv c_2$ on $\mathopen{]}t_{0}, +\infty\mathclose{]}$ (as in Proposition~\ref{prop-intro}), one has that $M=v_{0}$ (where $M$ is as in Lemma~\ref{lem-stimapriori}) and, taking into account that $\eta=c_1$, condition \eqref{cond-c3} becomes $c_2 > - c_1 F(\alpha) / ( F(1)-F(\alpha))$, in accord with the result stated in Proposition~\ref{prop-intro}. 
At last, we point out that it would be more difficult to provide the nonexistence result in the case $\alpha<\beta$, since a precise knowledge of the sign of $f$ seems essential to carry out a non-intersection argument.
\hfill$\lhd$
\end{remark}

One can deal similarly with the case when $q$ is definitively constant at $-\infty$, first proving that there exists $M <1$ such that, if \eqref{bvp-2} has a solution, then $\rho > M$, and next using a similar argument as the one in the proof of Theorem~\ref{thm-nonesistenza}. In the same way one can also deal with homoclinics, this time considering the autonomous branch emanating, in the phase-plane, from the point $(v_{0}, 0)$. We omit these statements for briefness.

\begin{figure}[htb]
\centering
\begin{subfigure}[t]{0.3\textwidth}
\centering
\begin{tikzpicture}[scale=1]
\begin{axis}[
  scaled ticks=false,
  tick label style={font=\scriptsize},
  axis y line=left, axis x line=middle,
  xtick={0.45, 1},
  ytick={0},
  xticklabels={$\alpha$, $1$},
  yticklabels={$0$},
  xlabel={\small $v$},
  ylabel={\small $\phi(v')$},
every axis x label/.style={  at={(ticklabel* cs:1.0)},  anchor=west,
},
every axis y label/.style={  at={(ticklabel* cs:1.0)},  anchor=south,
},
  width=5cm,
  height=5cm,
  xmin=0,
  xmax=1.1,
  ymin=-1,
  ymax=1]
\addplot [color=white, fill=magenta, fill opacity=0.1, line width=0pt, smooth] coordinates {(0., 0.) (0.1,0.2) (0.2, 0.6) (0.3, 0.6) (0.45,0.95) (0.45,0.75) (0.3,0.45) (0.2, 0.35) (0.1,0.01) (0.,0.)};
\addplot [color=white, fill=white, line width=0pt] coordinates {(0.45, 0.01) (0.6, 0.01) (0.6, 1.) (0.45, 1.) (0.45,0.01)};
\addplot [color=magenta,line width=0.7pt,smooth] coordinates {(0., 0.) (0.1,0.05) (0.2,0.5) (0.3,0.5) (0.45,0.9)};
\addplot [color=cyan,line width=0.7pt,smooth] coordinates {(1., 0) (0.992078, 0.0186396) (0.991379, 0.0192703) (0.985634, 0.0344828) (0.984032, 0.0370299) (0.982759, 0.0390545) (0.977859, 0.0517241) (0.975925, 0.0552984) (0.970729, 0.0689655) (0.967755, 0.0734406) (0.965517, 0.0769574) (0.95952, 0.0914539) (0.954618, 0.103448) (0.95122, 0.109336) (0.948276, 0.114497) (0.942853, 0.127085) (0.938152, 0.137931) (0.934418, 0.144697) (0.931034, 0.150805) (0.925913, 0.162171) (0.921226, 0.172414) (0.917338, 0.179504) (0.913793, 0.185889) (0.908691, 0.196693) (0.903768, 0.206897) (0.899971, 0.213734) (0.896552, 0.219778) (0.891175, 0.230625) (0.885704, 0.241379) (0.882302, 0.247362) (0.87931, 0.252502) (0.87335, 0.26394) (0.866956, 0.275862) (0.864317, 0.280357) (0.862069, 0.284087) (0.8552, 0.296608) (0.847427, 0.310345) (0.845999, 0.312688) (0.844828, 0.314556) (0.83671, 0.328592) (0.830557, 0.338886) (0.827586, 0.343841) (0.827326, 0.344307) (0.827025, 0.344828) (0.817839, 0.359816) (0.810345, 0.37162) (0.808254, 0.375128) (0.805661, 0.37931) (0.798567, 0.390238) (0.793103, 0.398351) (0.788776, 0.405139) (0.783035, 0.413793) (0.778878, 0.419825) (0.775862, 0.424213) (0.771178, 0.431034) (0.768868, 0.434288) (0.767241, 0.436578) (0.758923, 0.448276) (0.758743, 0.448521) (0.758621, 0.448681) (0.757981, 0.449555) (0.748468, 0.462453) (0.741379, 0.471655) (0.738064, 0.476127) (0.732914, 0.482759) (0.724138, 0.493622) (0.716858, 0.502682) (0.710263, 0.510509) (0.706897, 0.514454) (0.706028, 0.515504) (0.704513, 0.517241) (0.695017, 0.527966) (0.689655, 0.533726) (0.683853, 0.540119) (0.672747, 0.551724) (0.672527, 0.551951) (0.672414, 0.552062) (0.672076, 0.5524) (0.660967, 0.563314) (0.655172, 0.56889) (0.655092, 0.568966) (0.649227, 0.574317) (0.639574, 0.58292) (0.637931, 0.584372) (0.637288, 0.58492) (0.635777, 0.586207) (0.62931, 0.591599) (0.625082, 0.594991) (0.62069, 0.598386) (0.612674, 0.604658) (0.610767, 0.606051) (0.603448, 0.611226) (0.599961, 0.613716) (0.589437, 0.62069) (0.586207, 0.622786) (0.584692, 0.623719) (0.573716, 0.630191) (0.568966, 0.632764) (0.560677, 0.637266) (0.560149, 0.637539) (0.55935, 0.637931) (0.551724, 0.641608) (0.546175, 0.644074) (0.538392, 0.647354) (0.534483, 0.648883) (0.531852, 0.64991) (0.517376, 0.654904) (0.517241, 0.654948) (0.517143, 0.654976) (0.51641, 0.655172) (0.501936, 0.659044) (0.5, 0.659481) (0.497576, 0.66002) (0.486251, 0.662158) (0.482759, 0.662676) (0.478714, 0.663262) (0.470039, 0.664216) (0.465517, 0.664536) (0.460672, 0.664864) (0.453238, 0.665096) (0.448276, 0.665066) (0.443352, 0.665021) (0.435783, 0.664669) (0.431034, 0.664279) (0.426665, 0.66391) (0.417606, 0.662798) (0.413793, 0.662201) (0.410536, 0.661686) (0.398636, 0.659342) (0.396552, 0.658897) (0.394896, 0.658485) (0.382569, 0.655172) (0.379697, 0.6544) (0.37931, 0.654288) (0.378783, 0.654118) (0.364935, 0.64944) (0.362069, 0.648351) (0.357871, 0.646777) (0.350509, 0.64381) (0.344828, 0.641412) (0.337139, 0.637931) (0.336382, 0.637581) (0.335922, 0.63736) (0.327586, 0.633142) (0.322598, 0.630666) (0.31266, 0.625321) (0.310345, 0.62402) (0.309039, 0.623302) (0.30461, 0.62069) (0.29574, 0.615417) (0.293103, 0.613735) (0.288064, 0.610611) (0.282641, 0.607132) (0.275862, 0.602496) (0.269744, 0.598444) (0.262027, 0.593019) (0.258621, 0.590604) (0.257008, 0.589433) (0.252624, 0.586207) (0.244436, 0.580093) (0.241379, 0.57778) (0.234528, 0.572505) (0.231976, 0.570531) (0.224138, 0.564163) (0.219656, 0.560688) (0.208616, 0.551724) (0.2074, 0.550717) (0.206897, 0.550279) (0.205669, 0.549269) (0.195248, 0.540539) (0.189655, 0.535649) (0.183134, 0.530285) (0.175798, 0.52401) (0.172414, 0.521071) (0.171052, 0.519966) (0.167807, 0.517241) (0.158991, 0.509603) (0.155172, 0.506172) (0.146897, 0.49931) (0.14569, 0.498277) (0.137931, 0.491549) (0.134782, 0.489057) (0.127037, 0.482759) (0.122579, 0.47898) (0.12069, 0.477332) (0.116233, 0.473844) (0.110265, 0.469125) (0.103448, 0.463604) (0.0977963, 0.45958) (0.0884803, 0.452823) (0.0862069, 0.451204) (0.0851085, 0.450473) (0.0816361, 0.448276) (0.0721458, 0.441915) (0.0689655, 0.439895) (0.063068, 0.436481) (0.0588076, 0.434109) (0.0517241, 0.430142) (0.0449837, 0.427274) (0.0402153, 0.425258) (0.0344828, 0.423001) (0.0305063, 0.421746) (0.0197797, 0.41887) (0.0172414, 0.418205) (0.0151473, 0.417981) (0.00145151, 0.416696) (0., 0.416577)};
\addplot [color=cyan,line width=0.7pt,smooth] coordinates {(1., 0) (0.992078, -0.0186396) (0.991379, -0.0192703) (0.985634, -0.0344828) (0.984032, -0.0370299) (0.982759, -0.0390545) (0.977859, -0.0517241) (0.975925, -0.0552984) (0.970729, -0.0689655) (0.967755, -0.0734406) (0.965517, -0.0769574) (0.95952, -0.0914539) (0.954618, -0.103448) (0.95122, -0.109336) (0.948276, -0.114497) (0.942853, -0.127085) (0.938152, -0.137931) (0.934418, -0.144697) (0.931034, -0.150805) (0.925913, -0.162171) (0.921226, -0.172414) (0.917338, -0.179504) (0.913793, -0.185889) (0.908691, -0.196693) (0.903768, -0.206897) (0.899971, -0.213734) (0.896552, -0.219778) (0.891175, -0.230625) (0.885704, -0.241379) (0.882302, -0.247362) (0.87931, -0.252502) (0.87335, -0.26394) (0.866956, -0.275862) (0.864317, -0.280357) (0.862069, -0.284087) (0.8552, -0.296608) (0.847427, -0.310345) (0.845999, -0.312688) (0.844828, -0.314556) (0.83671, -0.328592) (0.830557, -0.338886) (0.827586, -0.343841) (0.827326, -0.344307) (0.827025, -0.344828) (0.817839, -0.359816) (0.810345, -0.37162) (0.808254, -0.375128) (0.805661, -0.37931) (0.798567, -0.390238) (0.793103, -0.398351) (0.788776, -0.405139) (0.783035, -0.413793) (0.778878, -0.419825) (0.775862, -0.424213) (0.771178, -0.431034) (0.768868, -0.434288) (0.758922, -0.448276) (0.758743, -0.448521) (0.758621, -0.448687) (0.757981, -0.449555) (0.748468, -0.462453) (0.741379, -0.471655) (0.738064, -0.476127) (0.732914, -0.482759) (0.727529, -0.48954) (0.724138, -0.493622) (0.716858, -0.502682) (0.710263, -0.510509) (0.706897, -0.514454) (0.706028, -0.515504) (0.704513, -0.517241) (0.695017, -0.527966) (0.689655, -0.533726) (0.683853, -0.540119) (0.672747, -0.551724) (0.672527, -0.551951) (0.672414, -0.552062) (0.672076, -0.5524) (0.660967, -0.563314) (0.655172, -0.568694) (0.649227, -0.574317) (0.639574, -0.58292) (0.637931, -0.584372) (0.637288, -0.58492) (0.635736, -0.586207) (0.625082, -0.594991) (0.62069, -0.598515) (0.614279, -0.603448) (0.612674, -0.604658) (0.610767, -0.606051) (0.603448, -0.611226) (0.599961, -0.613716) (0.589437, -0.62069) (0.587009, -0.622294) (0.586207, -0.622786) (0.584692, -0.623719) (0.573716, -0.630191) (0.568966, -0.632764) (0.560677, -0.637266) (0.560149, -0.637539) (0.551724, -0.641477) (0.546175, -0.644074) (0.538392, -0.647354) (0.534483, -0.648883) (0.531852, -0.64991) (0.517376, -0.654904) (0.517241, -0.654946) (0.517143, -0.654976) (0.51641, -0.655172) (0.501936, -0.659044) (0.5, -0.659481) (0.497576, -0.66002) (0.486251, -0.662158) (0.482759, -0.662743) (0.478714, -0.663262) (0.470039, -0.664216) (0.465517, -0.664536) (0.460672, -0.664864) (0.453238, -0.665096) (0.448276, -0.665154) (0.443352, -0.665021) (0.435783, -0.664669) (0.431034, -0.664279) (0.426665, -0.66391) (0.417606, -0.662798) (0.413793, -0.662263) (0.410536, -0.661686) (0.398636, -0.659342) (0.396552, -0.658897) (0.394896, -0.658485) (0.387931, -0.656751) (0.382369, -0.655172) (0.379697, -0.6544) (0.37931, -0.65428) (0.378783, -0.654118) (0.364935, -0.64944) (0.362069, -0.648351) (0.357871, -0.646777) (0.350509, -0.64381) (0.344828, -0.64128) (0.336382, -0.637581) (0.335922, -0.63736) (0.327586, -0.633142) (0.322598, -0.630666) (0.31266, -0.625321) (0.310345, -0.62402) (0.309039, -0.623302) (0.30461, -0.62069) (0.29574, -0.615417) (0.293103, -0.613735) (0.288064, -0.610611) (0.282641, -0.607132) (0.277026, -0.603448) (0.275862, -0.602668) (0.269744, -0.598444) (0.262027, -0.593019) (0.258621, -0.590604) (0.257008, -0.589433) (0.252563, -0.586207) (0.25, -0.584305) (0.244436, -0.580093) (0.241379, -0.577683) (0.234528, -0.572505) (0.231976, -0.570531) (0.224138, -0.564163) (0.219656, -0.560688) (0.208616, -0.551724) (0.206897, -0.550279) (0.205669, -0.549269) (0.195248, -0.540539) (0.189655, -0.535649) (0.183134, -0.530285) (0.175798, -0.52401) (0.172414, -0.521119) (0.171052, -0.519966) (0.167807, -0.517241) (0.158991, -0.509603) (0.155172, -0.506172) (0.146897, -0.49931) (0.14569, -0.498277) (0.137931, -0.491549) (0.134782, -0.489057) (0.127037, -0.482759) (0.122579, -0.47898) (0.12069, -0.477332) (0.116233, -0.473844) (0.110265, -0.469125) (0.105566, -0.465517) (0.103448, -0.463841) (0.0977963, -0.45958) (0.0884803, -0.452823) (0.0862069, -0.451204) (0.0851085, -0.450473) (0.0818092, -0.448276) (0.0775862, -0.445372) (0.0721458, -0.441915) (0.0689655, -0.439753) (0.063068, -0.436481) (0.0588076, -0.434109) (0.0517241, -0.430142) (0.0449837, -0.427274) (0.0402153, -0.425258) (0.0344828, -0.422845) (0.0305063, -0.421746) (0.0197797, -0.41887) (0.0172414, -0.418205) (0.0151473, -0.417981) (0.00145151, -0.416696) (0., -0.416625)};
\draw [color=gray, dashed, line width=0.3pt] (axis cs: 0.45,0)--(axis cs: 0.45, 1);
\end{axis}
\end{tikzpicture}
\caption{Representation of the intersection in the setting of Theorem~\ref{th-main}.} 
\end{subfigure}
\hspace{5pt}
\begin{subfigure}[t]{0.3\textwidth}
\centering
\begin{tikzpicture}[scale=1]
\begin{axis}[
  scaled ticks=false,
  tick label style={font=\scriptsize},
  axis y line=left, axis x line=middle,
  xtick={0.45, 1},
  ytick={0},
  xticklabels={$\alpha$, $1$},
  yticklabels={$0$},
  xlabel={\small $v$},
  ylabel={\small $\phi(v')$},
every axis x label/.style={  at={(ticklabel* cs:1.0)},  anchor=west,
},
every axis y label/.style={  at={(ticklabel* cs:1.0)},  anchor=south,
},
  width=5cm,
  height=5cm,
  xmin=0,
  xmax=1.1,
  ymin=-1,
  ymax=1]
\addplot [color=white, fill=magenta, fill opacity=0.1, line width=0pt, smooth] coordinates {(0., 0.) (0.1,0.2) (0.2, 0.6) (0.3, 0.6) (0.45,0.95) (0.45,0.75) (0.3,0.45) (0.2, 0.35) (0.1,0.01) (0.,0.)};
\addplot [color=white, fill=white, line width=0pt] coordinates {(0.45, 0.01) (0.6, 0.01) (0.6, 1.) (0.45, 1.) (0.45,0.01)};
\addplot [color=magenta,line width=0.7pt,smooth] coordinates {(0., 0.) (0.1,0.05) (0.2,0.5) (0.3,0.5) (0.45,0.9)};
\addplot [color=cyan,line width=0.7pt,smooth] coordinates {(0., 2.63678*10^-16) (0.00840148, -0.0176798) (0.00862069, -0.0178412) (0.0163447, -0.0344828) (0.0168892, -0.035187) (0.0172414, -0.0356425) (0.0249135, -0.0517241) (0.0254558, -0.0525367) (0.0334928, -0.0689655) (0.0341027, -0.0697256) (0.0344828, -0.0701934) (0.0428318, -0.0867502) (0.0515323, -0.103448) (0.0516447, -0.103607) (0.0517241, -0.103716) (0.0539513, -0.107903) (0.0605473, -0.120285) (0.0689655, -0.135469) (0.0700735, -0.137931) (0.0786278, -0.153089) (0.0862069, -0.165946) (0.0878086, -0.16921) (0.0894629, -0.172414) (0.0970864, -0.185138) (0.103448, -0.195277) (0.106464, -0.200866) (0.109891, -0.206897) (0.115943, -0.21639) (0.12069, -0.223488) (0.125527, -0.231704) (0.131534, -0.241379) (0.13522, -0.246802) (0.137931, -0.250599) (0.145023, -0.261678) (0.154606, -0.275862) (0.154942, -0.276324) (0.155172, -0.276626) (0.156533, -0.278584) (0.165011, -0.290669) (0.172414, -0.300643) (0.175211, -0.30475) (0.179277, -0.310345) (0.181034, -0.312644) (0.185542, -0.318571) (0.189655, -0.323979) (0.192394, -0.327586) (0.196007, -0.332124) (0.206134, -0.344828) (0.20661, -0.3454) (0.206897, -0.345724) (0.208032, -0.347098) (0.217413, -0.358278) (0.224138, -0.365793) (0.228376, -0.370834) (0.236004, -0.37931) (0.239499, -0.383072) (0.241379, -0.385074) (0.247426, -0.391404) (0.250811, -0.39493) (0.258621, -0.402499) (0.262359, -0.406316) (0.270258, -0.413793) (0.27409, -0.417338) (0.275862, -0.418859) (0.280463, -0.422995) (0.286062, -0.427876) (0.293103, -0.433529) (0.298297, -0.437888) (0.309438, -0.446462) (0.310345, -0.447132) (0.310758, -0.44745) (0.311897, -0.448276) (0.318966, -0.45324) (0.323569, -0.456311) (0.327586, -0.458991) (0.335408, -0.463919) (0.336629, -0.464673) (0.338057, -0.465517) (0.344828, -0.469402) (0.350096, -0.472221) (0.359093, -0.476806) (0.362069, -0.47821) (0.363895, -0.479106) (0.372448, -0.482759) (0.378111, -0.485158) (0.37931, -0.485596) (0.381061, -0.486259) (0.392816, -0.490229) (0.396552, -0.491276) (0.401537, -0.492729) (0.408007, -0.494331) (0.413793, -0.495439) (0.420842, -0.496857) (0.42376, -0.497307) (0.431034, -0.498033) (0.439108, -0.498906) (0.440171, -0.498968) (0.448276, -0.499014) (0.456458, -0.499122) (0.457353, -0.499087) (0.465517, -0.498353) (0.473004, -0.497733) (0.47544, -0.497397) (0.482759, -0.496033) (0.488849, -0.494939) (0.494591, -0.493576) (0.5, -0.492046) (0.50408, -0.490919) (0.514996, -0.487249) (0.517241, -0.486397) (0.518776, -0.485827) (0.525845, -0.482759) (0.532937, -0.479666) (0.534483, -0.478888) (0.53705, -0.477623) (0.54661, -0.472531) (0.551724, -0.46965) (0.558488, -0.465517) (0.5599, -0.464628) (0.56123, -0.463748) (0.568966, -0.458346) (0.572717, -0.455779) (0.58262, -0.448276) (0.585215, -0.446293) (0.586207, -0.445455) (0.588607, -0.443476) (0.597315, -0.43601) (0.603448, -0.430253) (0.609092, -0.425081) (0.620382, -0.413793) (0.620586, -0.413586) (0.62069, -0.413472) (0.621037, -0.413099) (0.631706, -0.401343) (0.637931, -0.393911) (0.64255, -0.388549) (0.649915, -0.37931) (0.653122, -0.37521) (0.655172, -0.372375) (0.663421, -0.361324) (0.666108, -0.35744) (0.672414, -0.348216) (0.673406, -0.346812) (0.674712, -0.344828) (0.683106, -0.33173) (0.689655, -0.320742) (0.69253, -0.316094) (0.695844, -0.310345) (0.701668, -0.299888) (0.706897, -0.289794) (0.710512, -0.283093) (0.714149, -0.275862) (0.719049, -0.265683) (0.724138, -0.254303) (0.727262, -0.247627) (0.729983, -0.241379) (0.735132, -0.228884) (0.741379, -0.212534) (0.742635, -0.209407) (0.743572, -0.206897) (0.748558, -0.192538) (0.74973, -0.189115) (0.754997, -0.172414) (0.756324, -0.16782) (0.758621, -0.159123) (0.762417, -0.145523) (0.764358, -0.137931) (0.767948, -0.122102) (0.768931, -0.11731) (0.771652, -0.103448) (0.772707, -0.0971374) (0.775862, -0.0756874) (0.776663, -0.070567) (0.776883, -0.0689655) (0.777157, -0.0663754) (0.779477, -0.0417129) (0.780034, -0.0344828) (0.780446, -0.0253158) (0.780923, -0.0101211) (0.781085, 0) (0.780923, 0.0101211) (0.780446, 0.0253158) (0.780034, 0.0344828) (0.779477, 0.0417129) (0.777157, 0.0663754) (0.776883, 0.0689655) (0.776663, 0.070567) (0.775862, 0.0756874) (0.772707, 0.0971374) (0.771652, 0.103448) (0.768931, 0.11731) (0.767948, 0.122102) (0.767241, 0.125223) (0.764362, 0.137931) (0.762417, 0.145523) (0.758621, 0.159123) (0.756324, 0.16782) (0.754997, 0.172414) (0.74973, 0.189115) (0.748558, 0.192538) (0.743572, 0.206897) (0.742635, 0.209407) (0.741379, 0.212534) (0.735132, 0.228884) (0.729983, 0.241379) (0.727262, 0.247627) (0.724138, 0.254303) (0.719049, 0.265683) (0.714149, 0.275862) (0.710512, 0.283093) (0.706897, 0.289794) (0.701668, 0.299888) (0.695844, 0.310345) (0.69253, 0.316094) (0.689655, 0.320742) (0.683106, 0.33173) (0.674712, 0.344828) (0.673406, 0.346812) (0.672414, 0.348216) (0.666108, 0.35744) (0.663421, 0.361324) (0.662893, 0.362069) (0.655172, 0.372518) (0.653122, 0.37521) (0.649915, 0.37931) (0.64255, 0.388549) (0.637931, 0.393911) (0.631706, 0.401343) (0.621037, 0.413099) (0.62069, 0.413472) (0.620586, 0.413586) (0.620382, 0.413793) (0.609092, 0.425081) (0.603448, 0.430253) (0.597315, 0.43601) (0.588607, 0.443476) (0.586207, 0.445502) (0.585215, 0.446293) (0.58262, 0.448276) (0.572717, 0.455779) (0.568966, 0.458346) (0.56123, 0.463748) (0.5599, 0.464628) (0.551724, 0.46945) (0.54661, 0.472531) (0.53705, 0.477623) (0.534483, 0.478888) (0.532937, 0.479666) (0.525845, 0.482759) (0.517241, 0.486397) (0.514996, 0.487249) (0.50408, 0.490919) (0.5, 0.492046) (0.494591, 0.493576) (0.488849, 0.494939) (0.482759, 0.496218) (0.47544, 0.497397) (0.473004, 0.497733) (0.465517, 0.498353) (0.457353, 0.499087) (0.456458, 0.499122) (0.448276, 0.499241) (0.440171, 0.498968) (0.439108, 0.498906) (0.431034, 0.498033) (0.42376, 0.497307) (0.420842, 0.496857) (0.413793, 0.495617) (0.408007, 0.494331) (0.401537, 0.492729) (0.396552, 0.491276) (0.392816, 0.490229) (0.381061, 0.486259) (0.37931, 0.485596) (0.378111, 0.485158) (0.372448, 0.482759) (0.363895, 0.479106) (0.362069, 0.47821) (0.359093, 0.476806) (0.350096, 0.472221) (0.344828, 0.469199) (0.336629, 0.464673) (0.335408, 0.463919) (0.327586, 0.458991) (0.323569, 0.456311) (0.311936, 0.448276) (0.310758, 0.44745) (0.310345, 0.447151) (0.309438, 0.446462) (0.298297, 0.437888) (0.293103, 0.433529) (0.286062, 0.427876) (0.280463, 0.422995) (0.275862, 0.418859) (0.27409, 0.417338) (0.270258, 0.413793) (0.262359, 0.406316) (0.258621, 0.402499) (0.250811, 0.39493) (0.247426, 0.391404) (0.241379, 0.384966) (0.239499, 0.383072) (0.236004, 0.37931) (0.228376, 0.370834) (0.224138, 0.365793) (0.217413, 0.358278) (0.208032, 0.347098) (0.206897, 0.345724) (0.20661, 0.3454) (0.206134, 0.344828) (0.196007, 0.332124) (0.192394, 0.327586) (0.189655, 0.323979) (0.185542, 0.318571) (0.17926, 0.310345) (0.175211, 0.30475) (0.172414, 0.300899) (0.166701, 0.293103) (0.165011, 0.290669) (0.156533, 0.278584) (0.155172, 0.276626) (0.154942, 0.276324) (0.154606, 0.275862) (0.145023, 0.261678) (0.137931, 0.250599) (0.13522, 0.246802) (0.131534, 0.241379) (0.125527, 0.231704) (0.12069, 0.223488) (0.115943, 0.21639) (0.109891, 0.206897) (0.106464, 0.200866) (0.103448, 0.195277) (0.0970864, 0.185138) (0.0894629, 0.172414) (0.0878086, 0.16921) (0.0862069, 0.165946) (0.0786278, 0.153089) (0.0700735, 0.137931) (0.0695414, 0.136779) (0.0689655, 0.135469) (0.0605473, 0.120285) (0.0539513, 0.107903) (0.0517241, 0.103716) (0.0516447, 0.103607) (0.0515323, 0.103448) (0.0428318, 0.0867502) (0.0344828, 0.0701934) (0.0341027, 0.0697256) (0.0334928, 0.0689655) (0.0254558, 0.0525367) (0.0249135, 0.0517241) (0.0172414, 0.0356425) (0.0168892, 0.035187) (0.0159049, 0.0344828) (0.00840148, 0.0176798) (0., 0.)};
\draw [color=gray, dashed, line width=0.3pt] (axis cs: 0.45,0)--(axis cs: 0.45, 1);
\end{axis}
\end{tikzpicture}
\caption{Representation of the intersection in the setting of Theorem~\ref{th-main-h}.} 
\end{subfigure}
\hspace{5pt}
\begin{subfigure}[t]{0.3\textwidth}
\centering
\begin{tikzpicture}[scale=1]
\begin{axis}[
  scaled ticks=false,
  tick label style={font=\scriptsize},
  axis y line=left, axis x line=middle,
  xtick={0.45, 1},
  ytick={0},
  xticklabels={$\alpha$, $1$},
  yticklabels={$0$},
  xlabel={\small $v$},
  ylabel={\small $\phi(v')$},
every axis x label/.style={  at={(ticklabel* cs:1.0)},  anchor=west,
},
every axis y label/.style={  at={(ticklabel* cs:1.0)},  anchor=south,
},
  width=5cm,
  height=5cm,
  xmin=0,
  xmax=1.1,
  ymin=-1,
  ymax=1]
\addplot [color=white, fill=magenta, fill opacity=0.1, line width=0pt, smooth] coordinates {(0., 0.) (0.1,0.2) (0.2, 0.6) (0.3, 0.6) (0.45,0.95) (0.45,0.75) (0.3,0.45) (0.2, 0.35) (0.1,0.01) (0.,0.)};
\addplot [color=white, fill=white, line width=0pt] coordinates {(0.45, 0.01) (0.6, 0.01) (0.6, 1.) (0.45, 1.) (0.45,0.01)};
\addplot [color=magenta,line width=0.7pt,smooth] coordinates {(0., 0.) (0.1,0.05) (0.2,0.5) (0.3,0.5) (0.45,0.9)};
\addplot [color=cyan,line width=0.7pt,smooth] coordinates {(0.327586, -0.454427) (0.32528, -0.452888) (0.318966, -0.448676) (0.318395, -0.448276) (0.312487, -0.443992) (0.310345, -0.44234) (0.305639, -0.438863) (0.299985, -0.434512) (0.293103, -0.428737) (0.287772, -0.424457) (0.276111, -0.41429) (0.275862, -0.414067) (0.275766, -0.413985) (0.275559, -0.413793) (0.264062, -0.40291) (0.258621, -0.397354) (0.252544, -0.391464) (0.241925, -0.380401) (0.241379, -0.37983) (0.24121, -0.379649) (0.240895, -0.37931) (0.230123, -0.36734) (0.224138, -0.360222) (0.2192, -0.354704) (0.210913, -0.344828) (0.206897, -0.339715) (0.200202, -0.331438) (0.197835, -0.328469) (0.189655, -0.317562) (0.187425, -0.314805) (0.184018, -0.310345) (0.177156, -0.30086) (0.172414, -0.2939) (0.167025, -0.286639) (0.159465, -0.275862) (0.157029, -0.272149) (0.155172, -0.269145) (0.147166, -0.257392) (0.141686, -0.248889) (0.137931, -0.243077) (0.137447, -0.242346) (0.13679, -0.241379) (0.127872, -0.227014) (0.12069, -0.214815) (0.118424, -0.211428) (0.115536, -0.206897) (0.109105, -0.195583) (0.103448, -0.185098) (0.0999189, -0.179473) (0.0956897, -0.172414) (0.0908718, -0.163084) (0.0862069, -0.153575) (0.0819744, -0.146396) (0.0771974, -0.137931) (0.073244, -0.129374) (0.0689655, -0.119638) (0.0647091, -0.111961) (0.0601743, -0.103448) (0.0564191, -0.0940584) (0.0517241, -0.0817379) (0.0484641, -0.0754856) (0.0451758, -0.0689655) (0.0410264, -0.0558783) (0.0346337, -0.0347847) (0.0345489, -0.0344828) (0.0345322, -0.0343839) (0.0344828, -0.0338948) (0.0289979, 0) (0.0344828, 0.0338948) (0.0345322, 0.0343839) (0.0345489, 0.0344828) (0.0346337, 0.0347847) (0.0410264, 0.0558783) (0.0451758, 0.0689655) (0.0484641, 0.0754856) (0.0517241, 0.0817379) (0.0564191, 0.0940584) (0.0601743, 0.103448) (0.0647091, 0.111961) (0.0689655, 0.119638) (0.073244, 0.129374) (0.0771974, 0.137931) (0.0819744, 0.146396) (0.0862069, 0.153575) (0.0908718, 0.163084) (0.0956897, 0.172414) (0.0999189, 0.179473) (0.103448, 0.185098) (0.109105, 0.195583) (0.115536, 0.206897) (0.118424, 0.211428) (0.12069, 0.214815) (0.127872, 0.227014) (0.13679, 0.241379) (0.137447, 0.242346) (0.137931, 0.243024) (0.141686, 0.248889) (0.147166, 0.257392) (0.155172, 0.269145) (0.157029, 0.272149) (0.159465, 0.275862) (0.167025, 0.286639) (0.172414, 0.2939) (0.177156, 0.30086) (0.184018, 0.310345) (0.187425, 0.314805) (0.189655, 0.317562) (0.197835, 0.328469) (0.200202, 0.331438) (0.206897, 0.339715) (0.208434, 0.341753) (0.210913, 0.344828) (0.2192, 0.354704) (0.224138, 0.360222) (0.230123, 0.36734) (0.240895, 0.37931) (0.24121, 0.379649) (0.241379, 0.37982) (0.241925, 0.380401) (0.252544, 0.391464) (0.258621, 0.397354) (0.264062, 0.40291) (0.275559, 0.413793) (0.275766, 0.413985) (0.275862, 0.414067) (0.276111, 0.41429) (0.287772, 0.424457) (0.293103, 0.428737) (0.299985, 0.434512) (0.305639, 0.438863) (0.310345, 0.442438) (0.312487, 0.443992) (0.318602, 0.448276) (0.32528, 0.452888) (0.327586, 0.454427) (0.332076, 0.457256) (0.33839, 0.461151) (0.344828, 0.464705) (0.351843, 0.468728) (0.356152, 0.470924) (0.362069, 0.473716) (0.3657, 0.475496) (0.378389, 0.480916) (0.37931, 0.481272) (0.379929, 0.481521) (0.38357, 0.482759) (0.394676, 0.486509) (0.396552, 0.487035) (0.399055, 0.487764) (0.409942, 0.490461) (0.413793, 0.491316) (0.418485, 0.492142) (0.42578, 0.493268) (0.431034, 0.493791) (0.436866, 0.494422) (0.442286, 0.494739) (0.448276, 0.494941) (0.454323, 0.494853) (0.459573, 0.494647) (0.465517, 0.494112) (0.470968, 0.493661) (0.477778, 0.49272) (0.482759, 0.491918) (0.486903, 0.491048) (0.497061, 0.488636) (0.5, 0.487804) (0.502217, 0.487192) (0.515403, 0.482759) (0.516976, 0.482227) (0.517241, 0.48212) (0.517639, 0.481964) (0.531142, 0.476076) (0.534483, 0.474393) (0.540031, 0.471661) (0.544883, 0.469077) (0.551724, 0.464956) (0.558165, 0.461157) (0.564682, 0.456843) (0.568966, 0.453851) (0.571043, 0.45243) (0.576526, 0.448276) (0.583531, 0.442923) (0.586207, 0.44079) (0.592684, 0.435322) (0.595685, 0.432749) (0.603448, 0.425462) (0.607449, 0.421795) (0.615453, 0.413793) (0.618929, 0.410272) (0.62069, 0.408327) (0.6266, 0.401972) (0.630095, 0.398121) (0.637931, 0.388765) (0.640922, 0.385293) (0.645692, 0.37931) (0.651475, 0.371916) (0.655172, 0.366804) (0.661753, 0.357988) (0.670856, 0.344828) (0.671751, 0.343501) (0.672414, 0.34244) (0.677671, 0.334313) (0.68145, 0.328417) (0.689655, 0.314649) (0.690845, 0.312725) (0.692241, 0.310345) (0.698276, 0.299521) (0.699953, 0.296457) (0.701788, 0.293103) (0.706897, 0.283251) (0.708762, 0.279593) (0.710638, 0.275862) (0.71726, 0.262106) (0.724138, 0.246728) (0.725431, 0.243965) (0.726569, 0.241379) (0.732759, 0.226342) (0.733253, 0.225128) (0.736461, 0.216734) (0.74018, 0.206897) (0.740677, 0.205492) (0.741379, 0.203334) (0.747662, 0.184979) (0.751624, 0.172414) (0.754186, 0.163545) (0.758621, 0.146753) (0.760201, 0.141092) (0.761009, 0.137931) (0.762991, 0.12919) (0.765574, 0.117354) (0.767241, 0.10888) (0.768309, 0.103448) (0.770196, 0.0921169) (0.77266, 0.0753696) (0.773533, 0.0689655) (0.773942, 0.0651258) (0.775862, 0.0434216) (0.77657, 0.0358982) (0.776679, 0.0344828) (0.776759, 0.0326881) (0.777672, 0.00361998) (0.77773, 0) (0.777672, -0.00361998) (0.776759, -0.0326881) (0.776679, -0.0344828) (0.77657, -0.0358982) (0.775862, -0.0434216) (0.773942, -0.0651258) (0.773533, -0.0689655) (0.77266, -0.0753696) (0.770196, -0.0921169) (0.768309, -0.103448) (0.767241, -0.10888) (0.765574, -0.117354) (0.762991, -0.12919) (0.761009, -0.137931) (0.760201, -0.141092) (0.758621, -0.146753) (0.754186, -0.163545) (0.751624, -0.172414) (0.747662, -0.184979) (0.741379, -0.203334) (0.740677, -0.205492) (0.74018, -0.206897) (0.736461, -0.216734) (0.733253, -0.225128) (0.732759, -0.226342) (0.726569, -0.241379) (0.725431, -0.243965) (0.724138, -0.2469) (0.718914, -0.258621) (0.71726, -0.262106) (0.715517, -0.26578) (0.710665, -0.275862) (0.708762, -0.279593) (0.706897, -0.283251) (0.701788, -0.293103) (0.699953, -0.296457) (0.692217, -0.310345) (0.690845, -0.312725) (0.689655, -0.314754) (0.681971, -0.327586) (0.68145, -0.328417) (0.677671, -0.334313) (0.672414, -0.34244) (0.671751, -0.343501) (0.670856, -0.344828) (0.661753, -0.357988) (0.655172, -0.366804) (0.651475, -0.371916) (0.645692, -0.37931) (0.640922, -0.385293) (0.637931, -0.388765) (0.630095, -0.398121) (0.6266, -0.401972) (0.62069, -0.408327) (0.618929, -0.410272) (0.615453, -0.413793) (0.607449, -0.421795) (0.603448, -0.425462) (0.595685, -0.432749) (0.592684, -0.435322) (0.586207, -0.440663) (0.583531, -0.442923) (0.576526, -0.448276) (0.571043, -0.45243) (0.568966, -0.453851) (0.564682, -0.456843) (0.558165, -0.461157) (0.551724, -0.465214) (0.551202, -0.465517) (0.544883, -0.469077) (0.540031, -0.471661) (0.534483, -0.474393) (0.531142, -0.476076) (0.517639, -0.481964) (0.517241, -0.48212) (0.516976, -0.482227) (0.515403, -0.482759) (0.502217, -0.487192) (0.5, -0.487804) (0.497061, -0.488636) (0.486903, -0.491048) (0.482759, -0.491792) (0.477778, -0.49272) (0.470968, -0.493661) (0.465517, -0.494112) (0.459573, -0.494647) (0.454323, -0.494853) (0.448276, -0.494773) (0.442286, -0.494739) (0.436866, -0.494422) (0.431034, -0.493791) (0.42578, -0.493268) (0.418485, -0.492142) (0.413793, -0.491198) (0.409942, -0.490461) (0.399055, -0.487764) (0.396552, -0.487035) (0.394676, -0.486509) (0.38357, -0.482759) (0.37931, -0.481272) (0.378389, -0.480916) (0.3657, -0.475496) (0.362069, -0.473716) (0.356152, -0.470924) (0.351843, -0.468728) (0.34584, -0.465517) (0.344828, -0.464959) (0.33839, -0.461151) (0.332076, -0.457256) (0.327586, -0.454427)};
\draw [color=gray, dashed, line width=0.3pt] (axis cs: 0.45,0)--(axis cs: 0.45, 1);
\end{axis}
\end{tikzpicture}
\caption{Representation of the intersection in the setting of Theorem~\ref{th-main-h-2}.} 
\end{subfigure}
\captionof{figure}{Qualitative representation of the strategy of the proofs of the main existence results about heteroclinic, homoclinic, and ``definitively periodic'' solutions.}
\label{fig-06}
\end{figure}
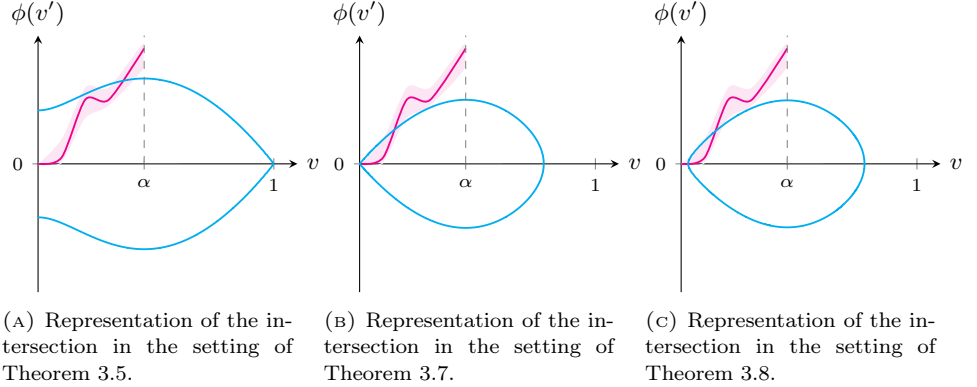

\begin{remark}[The case $F(1)=0$]\label{rem-F(1)=0}
We provide some comments regarding the balanced case $F(1)=0$, which was extensively dealt with by means of variational methods in \cite{BoCoNy-17}. Here the entire discussion can be carried out in the same way as above, but the sufficient condition \eqref{cond-c} for existence becomes $c \leq \eta$, necessarily implying that for every $t > t_{0}$ and $s \leq t_{0}$, it holds $q(t) \leq q(s)$. Under this assumption, we are able to find a heteroclinic solution. Comparing with \cite[Corollary 1.4]{BoCoNy-17}, we actually see that the two results overlap only for some precise choices of the weight $q$, but are in general quite different. Due to the technique used, in our result we allow $q$ to have a general behavior on the left of $t_{0}$ but we have to require $q$ constant on the right, while \cite[Corollary 1.4]{BoCoNy-17} exploits the assumption that $q$ asymptotically converges to its upper bound both at $\pm\infty$, leaving more freedom in between. However, as already remarked, our result holds in the case $F(1) > 0$ as well, differently from \cite[Corollary 1.4]{BoCoNy-17}. 
\hfill$\lhd$
\end{remark}

\subsection{Behavior of heteroclinics and homoclinics in dependence on $\delta$}\label{section-3.4}

In this section, we analyze the behavior of heteroclinic and homoclinic solutions of \eqref{eq-sect-3} in dependence on $\delta$, taking into account both the cases $\delta \to 0^{+}$ (\emph{vanishing diffusion}) and $\delta \to +\infty$ (\emph{large diffusion}). First, we focus on heteroclinics. 

\begin{theorem}\label{th-main-2}
For any $\delta > 0$, denote by $v_\delta$ the increasing heteroclinic solution of \eqref{eq-sect-3} provided by Theorem~\ref{th-main} and let $v_{*}=\lim_{\delta \to 0^{+}} v_\delta(t_{0}) \in \mathopen{]}0,\alpha\mathclose{]}$. Then, for every $t \in \mathbb{R}$ it holds that
\begin{equation*} 
\lim_{\delta \to 0^{+}} v_\delta(t) =\hat{v}(t):=
\begin{cases}
\, 0, &\text{if $t\in\mathopen{]}-\infty,t_{0}-v^*\mathclose{[}$,}
\\
\, t-t_{0}+v_{*}, & \text{if $t\in\mathopen{[}t_{0}-v_{*},t_{0}-v_{*}+1\mathclose{]}$,}
\\
\, 1, &\text{if $t\in\mathopen{]}t_{0}-v_{*}+1,+\infty\mathclose{]}$,}
\end{cases}
\end{equation*}
\begin{equation*}
\lim_{\delta \to +\infty} v_\delta(t)=v_{*},
\end{equation*}
where the former convergence is uniform on $\mathbb{R}$, while the second is locally uniform.
\end{theorem}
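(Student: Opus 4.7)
The proof naturally splits according to the regime of $\delta$, and both parts rely on the first-order reduction
\[
y_\delta(v) := \frac{1}{\sqrt{1-(v_\delta'(t_\delta(v)))^{2}}}-1, \qquad v\in\mathopen{[}0,1\mathclose{]},
\]
defined along the (global) monotonicity branch of $v_\delta$, with $t_\delta=v_\delta^{-1}$. Observe that $y_\delta(0)=y_\delta(1)=0$ since $v_\delta'(\pm\infty)=0$ by monotonicity, and that on $\mathopen{[}t_{0},+\infty\mathclose{[}$ the autonomous energy conservation yields the explicit expression $y_\delta(v)=c(F(1)-F(v))/\delta$.

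\textbf{Vanishing diffusion.} I would first invoke Ascoli--Arzel\`a (using $0\leq v_\delta\leq 1$ and $|v_\delta'|<1$) to extract, along any sequence $\delta_n\to 0^{+}$, a locally uniform limit $\hat v$ which is non-decreasing, Lipschitz with constant at most $1$, and passes through $(t_{0},v_{*})$ by the very definition of $v_{*}$. To identify the limit, use the ODE $\dot y_\delta=-q(t_\delta(v))f(v)/\delta$ with the lower bound $q\geq\eta>0$, yielding $y_\delta(v)\geq -\eta F(v)/\delta\to +\infty$ for each $v\in\mathopen{]}0,v_{*}\mathclose{[}$, together with $y_\delta(v)=c(F(1)-F(v))/\delta\to +\infty$ for $v\in\mathopen{]}v_{*},1\mathclose{[}$. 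Passing to the limit in the inverse-time representation
\[
t_\delta(v)-t_{0} = \int_{v_\delta(t_{0})}^{v}\frac{y_\delta(s)+1}{\sqrt{y_\delta(s)(y_\delta(s)+2)}}\,\mathrm{d}s
\]
by dominated convergence (the integrand is bounded by a constant and tends pointwise to $1$) gives $\hat v(t)=t-t_{0}+v_{*}$ on $\mathopen{[}t_{0}-v_{*},t_{0}-v_{*}+1\mathclose{]}$. The extreme plateaus $\hat v\equiv 0$ on $\mathopen{]}-\infty,t_{0}-v_{*}\mathclose{]}$ and $\hat v\equiv 1$ on $\mathopen{[}t_{0}-v_{*}+1,+\infty\mathclose{[}$ then follow from monotonicity combined with $v_\delta(\pm\infty)\in\{0,1\}$. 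Finally, since all $v_\delta$ are monotone with the same boundary values $0$ and $1$ at $\pm\infty$ and $\hat v$ is continuous with identical boundary values, a Dini-type argument upgrades locally uniform convergence to uniform convergence on $\mathbb{R}$.

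\textbf{Large diffusion.} The key estimate is $|y_\delta(v)|\leq C/\delta$ uniformly in $v$ (with $C$ depending only on $\|q\|_{L^{\infty}}$, $c$ and $F$), combined with the elementary inequality $|v_\delta'(t)|=\sqrt{y_\delta(y_\delta+2)}/(y_\delta+1)\leq\sqrt{2|y_\delta|}$, yielding $\|v_\delta'\|_{\infty}\leq\sqrt{2C/\delta}\to 0$. Integrating on any bounded interval $\mathopen{[}a,b\mathclose{]}\ni t_{0}$ gives $|v_\delta(t)-v_\delta(t_{0})|\leq (b-a)\sqrt{2C/\delta}\to 0$, i.e.\ $v_\delta\to v_\delta(t_{0})$ uniformly on $\mathopen{[}a,b\mathclose{]}$. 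It remains to identify $\lim_{\delta\to+\infty}v_\delta(t_{0})=v_{*}$: this follows because the intersection condition $\kappa(\rho)=w(\rho)$ from the proof of Theorem~\ref{th-main} is equivalent to $y_\infty^{\rho}(\rho)=c(F(1)-F(\rho))/\delta$, a relation whose leading term (in both the $\delta\to 0^{+}$ and $\delta\to+\infty$ expansions) determines the same asymptotic value $v_{*}$ of $\rho=v_\delta(t_{0})$.

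\textbf{Main obstacle.} The most delicate point is ruling out a plateau of $\hat v$ at the level $v_{*}$ in the vanishing diffusion case: the slope constraint derived in Step~2 only gives $\hat v'=1$ off the singleton $\{\hat v=v_{*}\}$. This is overcome by using the continuity of $y_\delta$ across $v=v_{*}$ together with the explicit lower bound $y_\delta(v_{*})=c(F(1)-F(v_{*}))/\delta\to+\infty$, which forces $v_\delta'(t_{0})\to 1$ and, via the inverse representation above, propagates the slope-$1$ behaviour both to a right and to a left neighbourhood of $t_{0}$, precluding any plateau.
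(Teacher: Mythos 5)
Your argument is correct and follows essentially the same route as the paper: Ascoli--Arzel\`a for compactness, the reduction to $y_\delta$ with the lower bound $y_\delta(v)\ge -\tfrac{\eta}{\delta}F(v)\to+\infty$ on $\mathopen{]}-\infty,t_{0}\mathclose{[}$ and the explicit autonomous expression $y_\delta(v)=\tfrac{c}{\delta}(F(1)-F(v))$ on $\mathopen{[}t_{0},+\infty\mathclose{[}$ to force slope $1$ as $\delta\to0^{+}$, monotonicity for the plateaus and a Dini-type upgrade to uniform convergence, and $y_\delta\to0$ to force slope $0$ as $\delta\to+\infty$. The only (harmless) variation is that you identify the limit profile by passing to the limit in the time-map integral $t_\delta(v)-t_{0}=\int (y_\delta+1)/\sqrt{y_\delta(y_\delta+2)}\,\mathrm{d}s$, whereas the paper works with the weak $L^{2}$ limit of $v_\delta'$; note that the identification $\lim_{\delta\to+\infty}v_\delta(t_{0})=v_{*}$ is left essentially asserted both in your sketch and in the paper's own proof.
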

\begin{proof}
We preliminarily observe that by the Ascoli--Arzel\`a theorem there exist nondecreasing Lipschitz continuous functions $\hat{v}_{0}$ and $\hat{v}_\infty$ such that $v_{\delta} \to \hat{v}_{0}$ for $\delta \to 0^{+}$ and $v_{\delta} \to \hat{v}_\infty$ for $\delta \to +\infty$, where both the convergences are locally uniform in $\mathbb{R}$.

As for the case $\delta \to 0^{+}$, we first work in the time interval $\mathopen{]}-\infty,t_{0}\mathclose{[}$; we recall that $v_\delta < \alpha$ on such an interval, due to the construction in the previous sections. Since $v_\delta$ is strictly increasing on $\mathopen{]}-\infty,t_{0}\mathclose{[}$ for every $\delta$, the corresponding function $y_\delta$ given by \eqref{eq-y} is well defined and 
satisfies the problem 
\begin{equation*}
\begin{cases}
\, \dot{y}_\delta(v)=-\dfrac{q(t_\delta(v))f(v)}{\delta},
\\
\, y_\delta(0)=0,
\end{cases}
\end{equation*}
for $v \in \mathopen{[}0,v_\delta(t_{0})\mathclose{]}$. 
On any compact set $\mathopen{[}v_1,v_2\mathclose{]} \subset \mathopen{]}0,v_{*}\mathclose{[}$, one has $\dot{y}_\delta(v) \geq -\frac{\eta}{\delta} f(v)$ and hence $y_\delta(v) \geq -\frac{\eta}{\delta} F(v)$, implying $y_\delta \to +\infty$ uniformly, for $\delta \to 0^{+}$.
Consequently, fixed $\mathopen{[}v_1,v_2\mathclose{]} \subset \mathopen{]}0,v_{*}\mathclose{[}$ and letting $\mathopen{[}t_1,t_2\mathclose{]}=\hat{v}_{0}^{-1}(\mathopen{[}v_1,v_2\mathclose{]})$, one has
\begin{equation}\label{eq-formuladeriv}
v_\delta'(t)=\frac{\sqrt{y_\delta(v_\delta(t))(2+y_\delta(v_\delta(t)))}}{1+y_\delta(v_\delta(t))} \to 1, \quad \text{as $\delta\to0^{+}$,}
\end{equation}
for every $t \in \mathopen{[}t_1, t_2\mathclose{]}$. However, $\{v_{\delta}'\}_\delta$ is bounded in $L^2_{\text{loc}}(\mathbb{R})$, so (up to subsequences) it has a weak limit $w \in L^2_{\mathrm{loc}}(\mathbb{R})$ satisfying $0 \leq w \leq 1$, which coincides with the distributional derivative of $\hat{v}_{0}$. 

Proceeding as in Section~\ref{section-2}, thanks to the dominated convergence theorem, we then have
\begin{equation*}
\int_{t_1}^{t_2} \mathrm{d}s \geq \int_{t_1}^{t_2} w(s) \, \mathrm{d}s 
= \hat{v}_{0}(t_2)-\hat{v}_{0}(t_1) 
= \lim_{\delta \to 0^{+}}\int_{t_1}^{t_2} v_{\delta}'(s) \, \mathrm{d}s 
= \int_{t_1}^{t_2} \mathrm{d}s 
\end{equation*}
and hence $w(t)=1$ for almost every $t \in \mathopen{[}t_1, t_2\mathclose{]}$. Repeating the argument for every $v_1, v_2$, one has that the distributional derivative of $\hat{v}_{0}$ coincides almost everywhere with $1$ whenever $\hat{v}_{0}$ is strictly positive on $\mathopen{]}-\infty,t_{0}\mathclose{[}$. Being $\hat{v}_{0}$ absolutely continuous, for every $t\in\mathopen{]}-\infty,t_{0}\mathclose{[}$ such that $\hat{v}(t) > 0$ we have that
\begin{equation*}
v_{*}-\hat{v}_{0}(t)=\hat{v}_{0}(t_{0})-\hat{v}_{0}(t)=\int_t^{t_{0}} w(s) \,\mathrm{d}s=t_{0}-t,
\end{equation*}
whence the conclusion follows (recall that $\hat{v}_{0}$ is nondecreasing). In particular, $\hat{v}_{0}$ has to be identically equal to $0$ on the left of the time $t$ in which $t-t_{0}+v_{*}$ vanishes.

On the other hand, on $\mathopen{]}v_\delta(t_{0}), 1\mathclose{[}$ the function $y_\delta$ given by \eqref{eq-y} satisfies
\begin{equation*}
\begin{cases}
\, \dot{y}_\delta(v)=-\dfrac{cf(v)}{\delta},
\\
\, y_\delta(1)=0,
\end{cases}
\end{equation*}
and hence it is explicitly given by $y_\delta(v)=-\frac{c}{\delta}(F(v)-F(1))$. 
Fixed $\mathopen{[}w_1,w_2\mathclose{]} \subset \mathopen{]}v_{*}, 1\mathclose{[}$ and letting $\mathopen{[}\tau_1,\tau_2\mathclose{]}=\hat{v}_{0}^{-1}(\mathopen{[}w_1,w_2\mathclose{]})$, one then has
\begin{align*}
v_\delta'(t)
&=\frac{\sqrt{y_\delta(v_\delta(t))(2+y_\delta(v_\delta(t)))}}{1+y_\delta(v_\delta(t))}
\\
&=
\frac{\sqrt{c(F(1)-F(v_\delta(t)))(2\delta+c(F(1)-F(v_\delta(t))))}}{\delta+c(F(1)-F(v_\delta(t)))}
\to 1, 
\quad \text{as $\delta\to0^{+}$,}
\end{align*}
for every $t \in \mathopen{[}\tau_1, \tau_2\mathclose{]}$.
The argument can then be concluded as before: for any time $t > t_{0}$ for which $\hat{v}_{0}(t) \in \mathopen{]}v_{*}, 1\mathclose{[}$, the distributional derivative of $\hat{v}_{0}$ is equal to $1$ and hence $\hat{v}_{0}$ coincides with the function in the statement. The uniform convergence follows as in \cite[Lemma~2.4]{Di-76}.
\smallbreak
As for the case $\delta \to +\infty$, here the function $y_\delta$ defined on $\mathopen{[}0,1\mathclose{]}$ trivially satisfies $y_\delta \to 0$ uniformly, implying via \eqref{eq-formuladeriv} that $v_\delta' \to 0$ locally uniformly. Consequently, $v_\delta$ converges locally uniformly to a constant, which is necessarily equal to $\hat{v}_\infty(t_{0})=v_{*}$; notice that the convergence is not uniform on the whole $\mathbb{R}$ since $v_\delta(-\infty)=0$, $v_\delta(+\infty)=1$ for every $\delta$. 
\end{proof}

Similarly, one can discuss the behavior of homoclinic solutions as $\delta \to 0^{+}$ and $\delta \to +\infty$. Here one can proceed as in the proof of Theorem~\ref{th-main-2}, first considering $t \in \mathopen{]}-\infty,t_{0}\mathclose{[}$ and then working with the autonomous problem in the complementary interval. This gives rise to the following statement, which is in accord with Proposition~\ref{th-delta-0}. 

\begin{theorem}\label{th-main-3}
For any $\delta > 0$, denote by $v_\delta$ the increasing heteroclinic solution of \eqref{eq-sect-3} provided by Theorem~\ref{th-main-h} and let $v_{*}=\lim_{\delta \to 0^{+}} v_\delta(t_{0}) \in \mathopen{]}0,\alpha\mathclose{]}$. Then, for every $t \in \mathbb{R}$ it holds that
\begin{equation*} 
\lim_{\delta \to 0^{+}} v_\delta(t) =\hat{v}(t):=
\begin{cases}
\, 0, &\text{if $t\in\mathopen{]}-\infty,t_{0}-v^*\mathclose{[} \cup \mathopen{]}v^*-t_{0}, +\infty\mathclose{[} $,}
\\
\, t-t_{0}+v_{*}, & \text{if $t\in\mathopen{[}t_{0}-v_{*},t_{0}-v_{*}+v_{0}\mathclose{]}$,}
\\
\, -t+t_{0}-v_{*}, &\text{if $t\in\mathopen{]}t_{0}-v_{*}+v_{0},t_{0}-v_{*}\mathclose{]}$,}
\end{cases}
\end{equation*}
\begin{equation*}
\lim_{\delta \to +\infty} v_\delta(t)=v_{*},
\end{equation*}
where the former convergence is uniform on $\mathbb{R}$, while the second is locally uniform.
\end{theorem}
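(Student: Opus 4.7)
The plan is to mimic closely the structure of the proof of Theorem~\ref{th-main-2}, taking into account the extra complication that on $\mathopen{[}t_0,+\infty\mathclose{[}$ the solution $v_\delta$ is no longer monotone but displays the typical symmetric ``tent'' shape of a homoclinic. As in Theorem~\ref{th-main-2}, I would first invoke the Ascoli--Arzelà theorem: since $0 \leq v_\delta \leq v_0 < 1$ and $\vert v_\delta' \vert < 1$ uniformly in $\delta$, along subsequences there exist Lipschitz continuous limits $\hat{v}_0$ and $\hat{v}_\infty$, with locally uniform convergence. Once the limit profiles are identified explicitly, the fact that the convergent subsequences are unrestricted will give the claimed full convergences.

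For the case $\delta \to 0^{+}$, I would analyze separately the intervals $\mathopen{]}-\infty,t_0\mathclose{[}$ and $\mathopen{]}t_0,+\infty\mathclose{[}$. On $\mathopen{]}-\infty,t_0\mathclose{[}$, the solution $v_\delta$ is strictly increasing with range in $\mathopen{]}0,v_*\mathclose{]}$ (taking $v_*=\lim_{\delta\to 0^+}v_\delta(t_0)$, up to subsequence), and the associated $y_\delta(v)$ defined via \eqref{eq-first} satisfies the two-point Cauchy problem $\dot y_\delta(v)=-q(t_\delta(v))f(v)/\delta$, $y_\delta(0)=0$, exactly as in Theorem~\ref{th-main-2}. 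The lower bound $\dot{y}_\delta(v)\geq -\eta f(v)/\delta$ on $\mathopen{]}0,v_*\mathclose{[}$ gives $y_\delta(v)\geq -\eta F(v)/\delta \to +\infty$ uniformly on compact subsets of $\mathopen{]}0,v_*\mathclose{[}$, so that via \eqref{eq-formuladeriv} one gets $v_\delta'(t)\to 1$ pointwise on the inverse image (through $\hat v_0$) of any compact subset of $\mathopen{]}0,v_*\mathclose{[}$. Then I would repeat verbatim the weak-$L^2_{\mathrm{loc}}$ argument used in Theorem~\ref{th-main-2} to conclude that the distributional derivative of $\hat{v}_0$ equals $1$ where $\hat{v}_0\in\mathopen{]}0,v_*\mathclose{[}$, forcing $\hat{v}_0(t)=t-t_0+v_*$ on $\mathopen{[}t_0-v_*,t_0\mathclose{]}$ and $\hat{v}_0\equiv 0$ to the left of $t_0-v_*$.

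On $\mathopen{[}t_0,+\infty\mathclose{[}$ the problem is autonomous with $q\equiv c$ and $v_\delta$ lies on the homoclinic orbit through $(v_0,0)$; I would split this interval into its two monotonicity branches. Because the energy is constant along the orbit, $y_\delta(v)$ is given explicitly by $y_\delta(v)=-\tfrac{c}{\delta}F(v)$ on $\mathopen{[}0,v_0\mathclose{]}$ (since $F(v_0)=0$, consistently with $y_\delta(v_0)=0$); as $F<0$ on $\mathopen{]}0,v_0\mathclose{[}$, on each compact subset of $\mathopen{]}0,v_0\mathclose{[}$ one has $y_\delta\to +\infty$ uniformly. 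Inserting this into \eqref{eq-formuladeriv} yields $v_\delta'\to 1$ on the ascending branch and $v_\delta'\to -1$ on the descending branch, and the same weak-limit argument as before identifies $\hat v_0$ as the tent profile going up with slope $1$ from $v_*$ at $t_0$ to $v_0$, then down with slope $-1$ back to $0$, and equal to $0$ thereafter. The upgrade from locally uniform to uniform convergence on $\mathbb{R}$ proceeds as in \cite[Lemma~2.4]{Di-76}, invoked in Theorem~\ref{th-main-2}. The case $\delta\to +\infty$ is easier: from $y_\delta(v)=-\tfrac{c}{\delta}F(v)$ on $\mathopen{[}t_0,+\infty\mathclose{[}$ and the analogous control $\vert y_\delta \vert \leq \Vert q\Vert_{L^\infty} \vert F(v)\vert/\delta$ on $\mathopen{]}-\infty,t_0\mathclose{[}$, one deduces $y_\delta\to 0$ uniformly, hence $v_\delta'\to 0$ and $v_\delta$ tends locally uniformly to the constant $\hat v_\infty(t_0)=v_*$.

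The main technical obstacle I anticipate is the treatment of the descending branch on $\mathopen{[}t_0,+\infty\mathclose{[}$, where the change of variable $t\mapsto v$ reverses orientation and the associated $y_\delta$ must be handled on a separate monotonicity piece; care is needed to patch the two pieces of $\hat v_0$ consistently at the apex $(t_0+v_0-v_*,v_0)$ and to match the endpoint conditions $v_\delta(+\infty)=0$ with $v_\delta'(+\infty)=0$. A secondary delicate point is the promotion of the convergence from locally uniform to uniform on $\mathbb{R}$, for which one must exclude escape of mass at infinity; this is where the cited result from \cite{Di-76} enters, essentially exploiting the uniform Lipschitz bound together with the fact that the limit profile is compactly non-zero.
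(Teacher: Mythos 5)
Your proposal is correct and follows exactly the route the paper intends: the paper gives no separate proof of Theorem~\ref{th-main-3}, stating only that one argues as in Theorem~\ref{th-main-2} by treating $\mathopen{]}-\infty,t_{0}\mathclose{[}$ via the estimate $y_\delta(v)\geq -\eta F(v)/\delta$ and the complementary interval via the explicit autonomous expression $y_\delta(v)=-\tfrac{c}{\delta}F(v)$ along the orbit through $(v_0,0)$, which is precisely what you do, branch by branch. The one point you flag but leave open --- excluding a plateau of $\hat v_0$ at the level $v_0$ near the apex, which the weak-derivative argument alone does not rule out --- is settled exactly as in the proof of Proposition~\ref{th-gamma-0-delta-0}: the time spent by $v_\delta$ between the levels $v_0-\varepsilon$ and $v_0$ is a convergent integral tending to $2\varepsilon$ as $\delta\to 0^{+}$, so no time is lost at the turning point.
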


\bibliographystyle{elsart-num-sort}
\bibliography{FeGa-biblio}

\begin{thebibliography}{10}
\expandafter\ifx\csname url\endcsname\relax
  \def\url#1{\texttt{#1}}\fi
\expandafter\ifx\csname urlprefix\endcsname\relax\def\urlprefix{URL }\fi

\bibitem{ArCa-22-pp}
M.~Arias, J.~Campos, Traveling waves for a {F}isher-type reaction-diffusion
  equation with a flux in divergence form, preprint (2022).

\bibitem{Az-16}
A.~Azzollini, On a prescribed mean curvature equation in {L}orentz-{M}inkowski
  space, J. Math. Pures Appl. 106 (2016) 1122--1140.

\bibitem{BiCaPa-19}
S.~Biagi, A.~Calamai, F.~Papalini, Heteroclinic solutions for a class of
  boundary value problems associated with singular equations, Nonlinear Anal.
  184 (2019) 44--68.

\bibitem{BoCoNy-17}
D.~Bonheure, I.~Coelho, M.~Nys, Heteroclinic solutions of singular quasilinear
  bistable equations, NoDEA Nonlinear Differential Equations Appl. 24 (2017)
  Paper No. 2, 29 pp.

\bibitem{BoSa-06}
D.~Bonheure, L.~Sanchez, Heteroclinic orbits for some classes of second and
  fourth order differential equations, in: Handbook of differential equations:
  ordinary differential equations. {V}ol. {III}, Handb. Differ. Equ.,
  Elsevier/North-Holland, Amsterdam, 2006, pp. 103--202.

\bibitem{BoFe-20}
A.~Boscaggin, G.~Feltrin, Positive periodic solutions to an indefinite
  {M}inkowski-curvature equation, J. Differential Equations 269 (2020)
  5595--5645.

\bibitem{Ca-11}
A.~Calamai, Heteroclinic solutions of boundary value problems on the real line
  involving singular {$\Phi$}-{L}aplacian operators, J. Math. Anal. Appl. 378
  (2011) 667--679.

\bibitem{CoSa-14}
I.~Coelho, L.~Sanchez, Travelling wave profiles in some models with nonlinear
  diffusion, Appl. Math. Comput. 235 (2014) 469--481.

\bibitem{Di-76}
O.~Diekmann, Limiting behaviour in an epidemic model, Nonlinear Anal. 1
  (1976/77) 459--470.

\bibitem{ElZa-13}
E.~Ellero, F.~Zanolin, Homoclinic and heteroclinic solutions for a class of
  second-order non-autonomous ordinary differential equations: multiplicity
  results for stepwise potentials, Bound. Value Probl. (2013) Paper No. 167, 23
  pp.

\bibitem{GaGuYa-19}
Z.~Gao, S.~B. Gudnason, Y.~Yang, Integer-squared laws for global vortices in
  the {B}orn-{I}nfeld wave equations, Ann. Physics 400 (2019) 303--319.

\bibitem{Ga-22-pp}
M.~Garrione, Asymptotic study of critical wave fronts for parameter-dependent
  {B}orn-{I}nfeld models: physically predicted behaviors and new phenomena,
  preprint.

\bibitem{GaSo-23}
M.~Garrione, E.~Sovrano, Stationary fronts and pulses for multistable equations
  with saturating diffusion, NoDEA Nonlinear Differential Equations Appl. 30
  (2023) Paper No. 31.

\bibitem{GaSa-15}
A.~Gavioli, L.~Sanchez, A variational property of critical speed to travelling
  waves in the presence of nonlinear diffusion, Appl. Math. Lett. 48 (2015)
  47--54.

\bibitem{HiKi-16}
D.~Hilhorst, Y.-J. Kim, Diffusive and inviscid traveling waves of the {F}isher
  equation and nonuniqueness of wave speed, Appl. Math. Lett. 60 (2016) 28--35.

\bibitem{KuRo-06}
A.~Kurganov, P.~Rosenau, On reaction processes with saturating diffusion,
  Nonlinearity 19 (2006) 171--193.

\bibitem{MaMa-02}
L.~Malaguti, C.~Marcelli, Heteroclinic orbits in plane dynamical systems, Arch.
  Math. (Brno) 38 (2002) 183--200.

\bibitem{MaMa-03}
L.~Malaguti, C.~Marcelli, Existence and multiplicity of heteroclinic solutions
  for a non-autonomous boundary eigenvalue problem, Electron. J. Differential
  Equations (2003) Paper No. 118, 21 pp.

\bibitem{PeWaLv-21}
M.~Pei, L.~Wang, X.~Lv, Existence, uniqueness and qualitative properties of
  heteroclinic solutions to nonlinear second-order ordinary differential
  equations, Electron. J. Qual. Theory Differ. Equ. (2021) Paper No. 1, 21 pp.

\bibitem{Ru-20}
Y.~L. Ruan, A tale of two approaches to heteroclinic solutions for
  {$\Phi$}-{L}aplacian systems, Proc. Roy. Soc. Edinburgh Sect. A 150 (2020)
  2535--2572.

\end{thebibliography}
\nocite*{}

\end{document}